\documentclass[11pt,a4paper]{article}
\usepackage[utf8]{inputenc}
\usepackage[T1]{fontenc}
\usepackage{lmodern}
\usepackage{amsmath,amsfonts,amssymb,amsthm}
\usepackage{graphicx,float}
\usepackage[hyphens]{url}
\usepackage{enumitem}
\usepackage{pifont}
\usepackage[pdfauthor={ANDRIAMANALINA Johariniaina Carlo},
            pdftitle={Classification of Finite-Dimensional Lie Algebras with Respect to the Length of Their Chief Series},
            pdfstartview=Fit,pdfpagelayout=SinglePage,pdfnewwindow=true,
            bookmarksnumbered=true,breaklinks,colorlinks,
            linkcolor=blue,urlcolor=black,citecolor=blue,linktoc=all]{hyperref}
\usepackage{geometry}
\theoremstyle{plain}
\newtheorem{theorem}{Theorem}[section]
\newtheorem{corollary}[theorem]{Corollary}
\newtheorem{proposition}[theorem]{Proposition}
\newtheorem{lemma}[theorem]{Lemma}

\theoremstyle{definition}
\newtheorem{definition}[theorem]{Definition}
\newtheorem{remark}[theorem]{Remark}

\setlist[itemize,1]{label={--}, itemsep=\baselineskip}
\setlist[enumerate,1]{label=\arabic*), itemsep=\baselineskip}

\title{Classification of Finite-Dimensional Lie Algebras with Respect to the Length of Their Chief Series}
\author{Johariniaina Carlo ANDRIAMANALINA}
\date{\today}

\begin{document}

\maketitle

\begin{abstract}
In this paper, we study finite-dimensional Lie algebras with respect to the length of their chief series. We examine separately the semisimple, solvable, and mixed cases over fields of characteristic \(0\), and we establish several structural results describing Lie algebras of prescribed chief length. In the solvable and mixed settings, the problem is reduced to the study of irreducible modules arising from suitable quotient actions. In positive characteristic, we investigate semisimple Lie algebras by means of Block's structural theorem and obtain several results concerning Lie algebras of chief length \(2\).
\end{abstract}

\section{Introduction}

The study of chains of substructures has long played an important role in algebra. In the theory of finite groups, several numerical invariants are defined in terms of chains of subgroups. Among the most prominent examples are the \emph{length} and \emph{depth} of a finite group, which measure the maximal and minimal lengths of unrefinable subgroup chains, respectively. These invariants have been investigated extensively; see, for example, \cite{burn} and the references therein. Results of this type provide valuable information on the global structure of a group and often lead to classifications of groups possessing small values of the corresponding invariant.

For Lie algebras, chains of ideals play a role analogous to chains of normal subgroups in group theory. In particular, a chief series is a finite chain of ideals whose successive factors are minimal ideals of the corresponding quotient algebras. Such series provide a natural way to decompose a Lie algebra into successive minimal ideal extensions. A theorem of Towers, recalled later in this paper, shows that any two chief series of a finite-dimensional Lie algebra have the same length. This common value, called the \emph{chief length} of the Lie algebra, is therefore a well-defined invariant that measures the complexity of its ideal structure. From this perspective, chief length may be viewed as a Lie-theoretic analogue of the chain-length invariants that arise in the study of finite groups.

The study of chief series and related chain invariants has already received some attention in the literature. In particular, Towers investigated the relationship between chief length and several other chain-length invariants arising from the subalgebra lattice of a finite-dimensional Lie algebra, including the minimum length of a maximal chain of subalgebras, chains of modular subalgebras, and chains of quasi-ideals \cite{chlg}. Among other results, he established characterisations of solvable Lie algebras in terms of these invariants and highlighted several striking differences between the Lie-theoretic and group-theoretic settings.

Motivated by this line of research, the present paper focuses specifically on chief length and investigates the extent to which finite-dimensional Lie algebras can be described according to the length of their chief series. Rather than seeking a complete classification in all situations, we aim to determine the structural constraints imposed by a prescribed chief length and to identify the extent to which the problem can be reduced to more fundamental questions in representation theory.

After introducing notation and recalling elementary facts concerning chief factors and chief series, we establish several preliminary results. In particular, we prove that a Lie algebra has chief length \(2\) if and only if every proper ideal is maximal. This criterion plays a central role throughout the paper.

We then study finite-dimensional semisimple Lie algebras over fields of characteristic \(0\). In contrast to the group-theoretic situation considered in \cite{burn}, where the classification of groups of a given length is highly intricate, the semisimple Lie algebra case admits a particularly simple description: we prove that a semisimple Lie algebra has chief length \(n\) if and only if it is the direct sum of exactly \(n\) simple ideals.

The next section is devoted to solvable Lie algebras over arbitrary fields. We first characterise solvable Lie algebras of chief length \(2\) and then obtain an inductive description of solvable Lie algebras of arbitrary chief length. More precisely, we show that a solvable Lie algebra has chief length \(n\) if and only if it arises as an extension of a Lie algebra of chief length \(n-1\) by an irreducible abelian ideal. This reduces the classification problem to the study of irreducible modules. Over algebraically closed fields of characteristic \(0\), Lie's theorem yields a stronger conclusion: the chief length of a solvable Lie algebra coincides with its dimension.

We subsequently investigate Lie algebras that are neither semisimple nor solvable. For chief length \(2\), we prove that such Lie algebras are precisely those of the form
\(
\mathcal{L}=S\ltimes\mathcal{R},
\)
where \(S\) is simple, \(\mathcal{R}\) is the abelian radical of \(\mathcal{L}\), and \(\mathcal{R}\) is an irreducible \(S\)-module. We then establish an inductive result analogous to the solvable case, showing that the general classification problem again reduces to the study of irreducible modules. Unlike the corresponding group-theoretic classifications of small depth and length obtained in \cite{burn}, this reduction appears to be the strongest general result that can be obtained.

The paper then turns to semisimple Lie algebras over fields of positive characteristic. Using Block's structural theorem, we derive several restrictions on semisimple Lie algebras of prescribed chief length. In particular, we obtain structural descriptions of semisimple Lie algebras of chief length \(2\) and show that further progress is closely connected with the study of derivation algebras and outer derivations of simple modular Lie algebras. We also prove that a semisimple Lie algebra has chief length equal to the number of simple components appearing in Block's decomposition if and only if it is a direct sum of simple Lie algebras.

The author was motivated in part by the treatments of ideals and structural properties of Lie algebras appearing in \cite{lati}, \cite{finnbid}, \cite{latchai}, and \cite{latnil}.

\section{Notation}

Throughout this paper, the symbol \(\oplus\) denotes the direct sum of algebras, \(\dotplus\) denotes the direct sum of vector spaces, and \(\ltimes\) denotes the semidirect product. Moreover, \(\{I_i\}\) denotes a sequence of ideals of a given algebra. All Lie algebras considered in this paper are finite-dimensional.

\section{Elementary Results}

\begin{definition}[\cite{lgchv}, p.2]
Let \(\mathcal{L}\) be a Lie algebra, and let \(B \subset A\) be subalgebras of \(\mathcal{L}\). The quotient \(A/B\) is called a \emph{chief factor} of \(\mathcal{L}\) if \(B\) is an ideal of \(\mathcal{L}\) and \(A/B\) is a minimal ideal of \(\mathcal{L}/B\).
\end{definition}

\begin{definition}
Let \(\mathcal{L}\) be a Lie algebra, and let
\(
\{0\}=I_0 \subsetneq I_1 \subsetneq \cdots \subsetneq I_n=\mathcal{L}
\)
be a chain of ideals of \(\mathcal{L}\) such that, for every \(i\), the quotient \(I_{i+1}/I_i\) is a chief factor of \(\mathcal{L}\). Such a chain is called a \emph{chief series} of \(\mathcal{L}\).
\end{definition}

As discussed in the introduction, chief length may be viewed as a Lie-theoretic analogue of the chain-length invariants arising in finite group theory. Such invariants measure the complexity of a group's structure through chains of subgroups; see \cite{burn}. In the Lie algebra setting, the corresponding role is played by chief series, which decompose a Lie algebra into successive minimal ideal extensions. We now make this notion precise.

\begin{theorem}[\cite{lgchv}, Theorem~3.1]
Let \(\mathcal{L}\) be a Lie algebra. If
\(
\{0\}=I_0 \subsetneq I_1 \subsetneq \cdots \subsetneq I_n=\mathcal{L}
\)
and
\(
\{0\}=J_0 \subsetneq J_1 \subsetneq \cdots \subsetneq J_m=\mathcal{L}
\)
are two chief series of \(\mathcal{L}\), then there exists a bijection between their chief factors such that corresponding factors are isomorphic as \(\mathcal{L}\)-modules.
\end{theorem}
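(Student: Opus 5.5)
This is a Jordan–Hölder type statement for the lattice of ideals of \(\mathcal{L}\), and I will prove it by induction on \(\dim\mathcal{L}\), following the Schreier/Zassenhaus pattern. The fact doing all the work is the second isomorphism theorem for ideals. If \(A,B\) are ideals of \(\mathcal{L}\), then \(A+B\) and \(A\cap B\) are ideals and \((A+B)/B\cong A/(A\cap B)\). This map is induced by inclusion, so it commutes with the adjoint action of \(\mathcal{L}\) and is an isomorphism of \(\mathcal{L}\)-modules.

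Both factors are modules over \(\mathcal{L}\), and in fact over \(\mathcal{L}/B\) and \(\mathcal{L}/(A\cap B)\) respectively. I will also use that a quotient \(I_{i+1}/I_i\) of ideals is a chief factor exactly when there is no ideal of \(\mathcal{L}\) strictly between \(I_i\) and \(I_{i+1}\), by the correspondence theorem.

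The base case \(\dim\mathcal{L}=0\) is trivial, and for the inductive step consider the top terms \(I_{n-1}\) and \(J_{m-1}\).

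\emph{Case 1: \(I_{n-1}=J_{m-1}\).} Apply the induction hypothesis to the Lie algebra \(I_{n-1}\). There is a subtlety here. The ideals \(I_i\) need not be chief series of \(I_{n-1}\) regarded as a Lie algebra in its own right, since ideals of \(I_{n-1}\) need not be ideals of \(\mathcal{L}\). So I will not induct on algebras. Instead I will induct on the statement \emph{``any two chains of \(\mathcal{L}\)-ideals between \(0\) and an \(\mathcal{L}\)-ideal \(M\), with no \(\mathcal{L}\)-ideal strictly between consecutive terms, have factors matching up to \(\mathcal{L}\)-module isomorphism''}, by induction on \(\dim M\). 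This equals the statement above when \(M=\mathcal{L}\). With this formulation, Case 1 follows immediately and the top factors \(\mathcal{L}/I_{n-1}\) are matched with each other.

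\emph{Case 2: \(I_{n-1}\neq J_{m-1}\).} Both are maximal among proper \(\mathcal{L}\)-ideals of \(M\), so \(I_{n-1}+J_{m-1}=M\). Put \(K=I_{n-1}\cap J_{m-1}\) and choose a chief chain from \(0\) to \(K\), which exists by finite dimensionality. The isomorphisms \(M/I_{n-1}\cong J_{m-1}/K\) and \(M/J_{m-1}\cong I_{n-1}/K\) hold as \(\mathcal{L}\)-modules. In particular there is no ideal strictly between \(K\) and \(J_{m-1}\), or between \(K\) and \(I_{n-1}\). So the chain through \(K\) followed by \(I_{n-1}\) is a chief chain to \(I_{n-1}\), and similarly for \(J_{m-1}\).

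Now apply the induction hypothesis twice:
\begin{itemize}
\item to \(I_{n-1}\), comparing \(\{I_i\}_{i<n}\) with \(0\subset\cdots\subset K\subset I_{n-1}\);
\item to \(J_{m-1}\), comparing \(\{J_j\}_{j<m}\) with \(0\subset\cdots\subset K\subset J_{m-1}\).
\end{itemize}
The two auxiliary chains up to \(M\), namely \(\cdots\subset K\subset I_{n-1}\subset M\) and \(\cdots\subset K\subset J_{m-1}\subset M\), have the same factors. The only difference is the top two, which are swapped via the isomorphisms above. Composing these bijections gives the required matching, and in particular \(n=m\).

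The main obstacle is the formulation point in Case 1. A naive induction on the subalgebra \(I_{n-1}\) is wrong, because minimality must be measured with respect to \(\mathcal{L}\)-ideals and the factors must be isomorphic as \(\mathcal{L}\)-modules, not \(I_{n-1}\)-modules. Working relative to a fixed ambient \(\mathcal{L}\) throughout, with induction on \(\dim M\), handles this. Everything else is the isomorphism theorem and checking that the induced maps are \(\mathcal{L}\)-equivariant.
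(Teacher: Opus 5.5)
The paper gives no proof of this statement; it simply quotes it from \cite{lgchv}, so there is no internal argument to compare yours against.

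Your argument is correct and complete. The relative formulation (chains of $\mathcal{L}$-ideals up to an $\mathcal{L}$-ideal $M$, with induction on $\dim M$) is exactly the right one. The equality $I_{n-1}+J_{m-1}=M$ follows from maximality, since neither top term can contain the other. Your claim that no $\mathcal{L}$-ideal lies strictly between $K$ and $J_{m-1}$ is also legitimate, but deserves one explicit line. The $\mathcal{L}$-submodules of $J_{m-1}/K$ are exactly the quotients $N/K$ with $N$ an ideal of $\mathcal{L}$, so irreducibility transfers along your $\mathcal{L}$-isomorphism $M/I_{n-1}\cong J_{m-1}/K$. Alternatively, you can get this directly from the modular law.

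The standard shortcut is worth knowing. The $\mathcal{L}$-submodules of $\mathcal{L}$ under $\mathrm{ad}$ are precisely the ideals. Hence a chief series of $\mathcal{L}$ is nothing but a composition series of the adjoint module, and the chief factors are its composition factors. The Jordan--H\"older theorem for finite-length modules over $U(\mathcal{L})$ (or for groups with operators) then gives the bijection, with $\mathcal{L}$-module isomorphisms, in one line.

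That viewpoint also dissolves the ``subtlety'' you flag in Case~1: the statement you induct on is just Jordan--H\"older for the submodule $M$ of the adjoint module. Your proof is in effect the standard Schreier-type proof of Jordan--H\"older specialised to this module. It buys self-containedness and makes explicit where the $\mathcal{L}$-equivariance of the second isomorphism theorem is used, at the cost of redoing a textbook argument.
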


\begin{corollary}
All chief series of a Lie algebra \(\mathcal{L}\) have the same length.
\end{corollary}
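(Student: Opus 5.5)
The corollary follows directly from the preceding theorem of Towers (\cite{lgchv}, Theorem~3.1). Take two arbitrary chief series
\(
\{0\}=I_0 \subsetneq \cdots \subsetneq I_n=\mathcal{L}
\)
and
\(
\{0\}=J_0 \subsetneq \cdots \subsetneq J_m=\mathcal{L}
\)
of \(\mathcal{L}\). The first has exactly \(n\) chief factors \(I_{i+1}/I_i\), \(0\le i\le n-1\), and the second exactly \(m\) chief factors \(J_{j+1}/J_j\). The inclusions are strict, so each of these factors is a nonzero quotient. Consecutive terms are distinct, so the indexing gives exactly \(n\) and \(m\) genuinely different factors, and no identification of factors occurs.

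The theorem provides a bijection between the set of chief factors of the first series and that of the second. A bijection between finite sets forces equal cardinality, so \(n=m\). The requirement in the theorem that corresponding factors be isomorphic as \(\mathcal{L}\)-modules is stronger than what we need; we use only the existence of the bijection.

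The one point that needs care is how "the chief factors" are counted: they should be indexed by position \(i\) in the series, not regarded as a set of isomorphism classes. Two different positions may carry isomorphic factors, and counting isomorphism classes would lose this multiplicity. I read the bijection in the theorem as one between the index sets \(\{0,\dots,n-1\}\) and \(\{0,\dots,m-1\}\), as in the Jordan--Hölder-type statement it models.

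The argument also uses that every finite-dimensional \(\mathcal{L}\) has at least one chief series, so that the common length exists. This holds because any strictly increasing chain of ideals can be refined until each step is minimal. Dimension bounds the length of such a chain, so the refinement process terminates. With this, the common length \(n\) is a well-defined invariant of \(\mathcal{L}\), the chief length.
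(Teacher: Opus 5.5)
Your proposal is correct and takes the paper's approach: the paper just says the corollary follows immediately from Towers' theorem, and you spell out the point that matters, namely that the bijection is between chief factors indexed by position in the series, so multiplicities are kept and \(n=m\). Your remark on the existence of a chief series in the finite-dimensional case is not needed for the corollary itself, but it is what makes the subsequent definition of \(l(\mathcal{L})\) meaningful.
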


\begin{proof}
The result follows immediately from the previous theorem.
\end{proof}

\begin{definition}
The common length of the chief series of a Lie algebra \(\mathcal{L}\) is called the \emph{chief length} of \(\mathcal{L}\) and is denoted by \(l(\mathcal{L})\).
\end{definition}

By definition, a Lie algebra of chief length \(0\) is trivial, whereas a Lie algebra of chief length \(1\) is either simple or one-dimensional. The following theorem characterises Lie algebras of chief length \(2\).

\begin{theorem}\label{MAX}
A Lie algebra \(\mathcal{L}\) has chief length \(2\) if and only if every proper ideal of \(\mathcal{L}\) is maximal.
\end{theorem}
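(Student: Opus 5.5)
Before starting we fix the reading of the statement. For a simple or one-dimensional \(\mathcal{L}\) the only proper ideal is \(\{0\}\), and it is maximal, yet \(l(\mathcal{L})=1\). So I would read Theorem~\ref{MAX} as: \(\mathcal{L}\) has chief length \(2\) if and only if \(\mathcal{L}\) has at least one nonzero proper ideal and every nonzero proper ideal of \(\mathcal{L}\) is maximal. I would state this convention explicitly. The whole argument rests on two tools: the Corollary that all chief series have the same length, and the fact that in a finite-dimensional Lie algebra any chain of ideals of \(\mathcal{L}\) can be refined to a chief series.

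The refinement fact comes first. Let \(A\subsetneq B\) be consecutive ideals of \(\mathcal{L}\) in the chain. If \(B/A\) is not a minimal ideal of \(\mathcal{L}/A\), insert an ideal \(C\) of \(\mathcal{L}\) with \(A\subsetneq C\subsetneq B\), using the correspondence between ideals of \(\mathcal{L}/A\) and ideals of \(\mathcal{L}\) containing \(A\). Each insertion strictly increases the length of a chain of subspaces, and such chains have length at most \(\dim\mathcal{L}\). So the process terminates in a chief series containing the original chain.

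For (\(\Rightarrow\)), take a chief series \(\{0\}\subsetneq I_1\subsetneq\mathcal{L}\). Then \(I_1\) is a nonzero proper ideal, so the existence part holds. Now let \(J\) be any nonzero proper ideal and suppose \(J\) is not maximal, so there is an ideal \(K\) with \(J\subsetneq K\subsetneq\mathcal{L}\). Refining the chain \(\{0\}\subsetneq J\subsetneq K\subsetneq\mathcal{L}\) gives a chief series of length at least \(3\). This contradicts the Corollary, since \(l(\mathcal{L})=2\).

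For (\(\Leftarrow\)), let \(I\) be a nonzero proper ideal. By finite dimensionality, choose a nonzero ideal \(M\subseteq I\) of minimal dimension among nonzero ideals of \(\mathcal{L}\) contained in \(I\). Any nonzero ideal of \(\mathcal{L}\) inside \(M\) also lies in \(I\) and has dimension at most \(\dim M\), so it equals \(M\); hence \(M\) is a minimal ideal of \(\mathcal{L}\). Since \(M\) is nonzero and proper, the hypothesis makes \(M\) maximal, so \(\mathcal{L}/M\) has no ideals other than \(\{0\}\) and itself. Thus \(\mathcal{L}/M\) is itself a minimal ideal of \(\mathcal{L}/M\). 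Therefore \(\{0\}\subsetneq M\subsetneq\mathcal{L}\) is a chief series, and \(l(\mathcal{L})=2\) by the Corollary.

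The only delicate points are the edge-case interpretation of ``proper'' and the refinement lemma. Both are routine, so I expect no real obstacle.
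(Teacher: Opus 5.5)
Your proof is correct and follows essentially the same route as the paper: for (\(\Rightarrow\)) you use the invariance of chief length on a chain through a nonzero proper ideal, and for (\(\Leftarrow\)) you exhibit an ideal that is both minimal and maximal, which gives the chief series \(\{0\}\subsetneq M\subsetneq\mathcal{L}\). Your explicit refinement lemma and your convention (only nonzero proper ideals count, and at least one must exist) supply what the paper leaves implicit: the justification for its claim that \(\{0\}\subset I\subset\mathcal{L}\) is a chief series merely because \(l(\mathcal{L})=2\), and the exclusion of simple and one-dimensional algebras, for which the literal statement fails.
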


\begin{proof}
Suppose that \(l(\mathcal{L})=2\), and let \(I\) be a proper ideal of \(\mathcal{L}\). Consider the chain
\(
\{0\}=I_0 \subset I_1=I \subset I_2=\mathcal{L}.
\)
Since \(\mathcal{L}\) has chief length \(2\), this chain is a chief series of \(\mathcal{L}\). Hence, \(I_2/I_1\) is a chief factor, so \(\mathcal{L}/I\) is a minimal ideal of \(\mathcal{L}/I\). Therefore, no proper ideal of \(\mathcal{L}\) properly contains \(I\), and thus \(I\) is maximal.

Conversely, suppose that every proper ideal of \(\mathcal{L}\) is maximal. Let \(I\) be a proper ideal of \(\mathcal{L}\). Since \(I\) is maximal, the quotient \(\mathcal{L}/I\) is a minimal ideal of itself, and therefore \(I_2/I_1\) is a chief factor.

Now let \(J\) be an ideal such that
\(
\{0\} \subset J \subset I.
\)
By hypothesis, both \(I\) and \(J\) are maximal ideals. Since \(J \subset I\), it follows that \(J=I\). Hence, \(I\) contains no nontrivial proper ideals and is therefore a minimal ideal of \(\mathcal{L}\). Consequently, \(I_1/I_0\) is also a chief factor.

Thus,
\(
\{0\}=I_0 \subset I_1=I \subset I_2=\mathcal{L}
\)
is a chief series of \(\mathcal{L}\), and therefore \(l(\mathcal{L})=2\).
\end{proof}

\begin{remark}\label{nonoverlapid}
Let \(\mathcal{L}\) be a Lie algebra over an arbitrary field, and let \(I\) and \(J\) be ideals of \(\mathcal{L}\) such that
\(
I \cap J = \{0\}.
\)
Then
\(
[I,J]=\{0\},
\)
and consequently
\(
I \dotplus J = I \oplus J.
\)

Indeed, since \(I\) and \(J\) are ideals of \(\mathcal{L}\), we have
\(
[I,J] \subseteq I \cap J.
\)
Hence,
\(
[I,J]=\{0\}.
\)

Now let \(i_0+j_0,i_1+j_1 \in I \dotplus J\), where \(i_0,i_1 \in I\) and \(j_0,j_1 \in J\). Then
\(
[(i_0+j_0),(i_1+j_1)]
=
[i_0,i_1]+[j_0,j_1]+[i_0,j_1]+[j_0,i_1].
\)
Since \([I,J]=\{0\}\), it follows that
\(
[(i_0+j_0),(i_1+j_1)]
=
[i_0,i_1]+[j_0,j_1].
\)
Therefore, the Lie bracket on \(I \dotplus J\) coincides with the direct product bracket, and hence
\(
I \dotplus J = I \oplus J.
\)
\end{remark}

\section{Finite-Dimensional Semisimple Lie Algebras over Fields of Characteristic Zero}

We have already observed that a Lie algebra has chief length \(0\) if and only if it is trivial, whereas a Lie algebra has chief length \(1\) if and only if it is either one-dimensional or simple. In particular, a semisimple Lie algebra of chief length \(1\) is necessarily simple. In this section, we classify finite-dimensional semisimple Lie algebras over fields of characteristic zero according to their chief length.

\begin{proposition}
Let \(\mathcal{L}\) be a finite-dimensional semisimple Lie algebra over a field of characteristic zero. Then \(l(\mathcal{L})=n\) if and only if
\(
\mathcal{L}=I_1 \oplus I_2 \oplus \cdots \oplus I_n,
\)
where \(I_1,I_2,\ldots,I_n\) are simple ideals of \(\mathcal{L}\).
\end{proposition}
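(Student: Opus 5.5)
Since \(\mathcal{L}\) is semisimple over a field of characteristic zero, the standard structure theorem gives a decomposition \(\mathcal{L}=S_1\oplus\cdots\oplus S_m\) into simple ideals. This decomposition is unique, and every ideal of \(\mathcal{L}\) is a direct sum of some of the \(S_j\). The plan is to show that the chief length is exactly the number \(m\) of simple summands. Both directions of the proposition then follow. For ``\(\Leftarrow\)'', a decomposition into \(n\) simple ideals forces \(m=n\) by uniqueness, so \(l(\mathcal{L})=n\). For ``\(\Rightarrow\)'', \(l(\mathcal{L})=n\) together with \(l(\mathcal{L})=m\) gives \(m=n\), and we take \(I_j=S_j\).

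First I will exhibit an explicit chief series of length \(m\), namely
\[
\{0\}\subsetneq S_1\subsetneq S_1\oplus S_2\subsetneq\cdots\subsetneq S_1\oplus\cdots\oplus S_m=\mathcal{L}.
\]
Put \(J_k=S_1\oplus\cdots\oplus S_k\). Each \(J_k\) is an ideal, because each \(S_j\) is an ideal and, by Remark~\ref{nonoverlapid}, \([S_i,S_j]=\{0\}\) for \(i\neq j\). It remains to check that \(J_{k+1}/J_k\cong S_{k+1}\) is a minimal ideal of \(\mathcal{L}/J_k\cong S_{k+1}\oplus\cdots\oplus S_m\). Take a nonzero ideal of \(\mathcal{L}/J_k\) contained in the image of \(S_{k+1}\). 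It is in particular an ideal of \(S_{k+1}\), since the other summands act trivially on \(S_{k+1}\). By simplicity of \(S_{k+1}\) it is all of \(S_{k+1}\). So every factor is a chief factor, and by the Corollary to the Jordan--Hölder-type theorem of Towers, \(l(\mathcal{L})=m\).

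The main point needing care is the claim that every ideal \(I\) of \(\mathcal{L}\) is a sum of some of the \(S_j\), which gives uniqueness of the decomposition. I would prove it directly. Fix \(j\). Then \([S_j,I]\subseteq S_j\cap I\) is an ideal of \(S_j\), so it is either \(0\) or \(S_j\). If \(S_j\cap I=S_j\), then \(S_j\subseteq I\). Otherwise \([S_j,I]=0\), so \(I\) centralizes \(S_j\). If this second case holds for every \(j\) with \(S_j\not\subseteq I\), write \(I'=\bigoplus_{S_j\subseteq I}S_j\). Projecting \(I\) onto the complementary summands lands in an ideal that commutes with those \(S_j\). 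Since \(S_j\) has trivial centre (it is simple and nonabelian), the projection must vanish, and hence \(I=I'\). Consequently, if \(\mathcal{L}=I_1\oplus\cdots\oplus I_n\) with each \(I_i\) simple, each \(I_i\) is a sum of some \(S_j\). By simplicity it is a single \(S_j\), and then directness and spanning force the \(I_i\) to be a permutation of the \(S_j\). This gives \(n=m\).
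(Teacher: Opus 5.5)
Your proof is correct and follows essentially the paper's route: the chain $\{0\}\subsetneq S_1\subsetneq S_1\oplus S_2\subsetneq\cdots\subsetneq\mathcal{L}$ has simple, hence chief, factors, so $l(\mathcal{L})$ equals the number of simple summands. Your extra lemma that every ideal is a sum of some $S_j$ is valid but not needed for the ``$\Leftarrow$'' direction: applying the same chain construction directly to the given decomposition $I_1\oplus\cdots\oplus I_n$ already yields $l(\mathcal{L})=n$, because Towers' theorem makes chief length an invariant.
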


\begin{proof}
Since \(\mathcal{L}\) is a finite-dimensional semisimple Lie algebra over a field of characteristic zero, there exist simple ideals \(I_1,I_2,\ldots,I_m\) such that
\(
\mathcal{L}=I_1 \oplus I_2 \oplus \cdots \oplus I_m.
\)

Consider the chain of ideals
\(
\{0\}=J_0
\subsetneq
J_1=I_1
\subsetneq
J_2=I_1\oplus I_2
\subsetneq
\cdots
\subsetneq
J_m=\mathcal{L}.
\)

For each \(k \in \{0,1,\ldots,m-1\}\), we have
\(
J_{k+1}/J_k \cong I_{k+1},
\)
and since \(I_{k+1}\) is simple, the quotient \(J_{k+1}/J_k\) is a minimal ideal of \(\mathcal{L}/J_k\). Hence, each factor \(J_{k+1}/J_k\) is a chief factor, and therefore the above chain is a chief series of \(\mathcal{L}\).

Consequently,
\(
l(\mathcal{L})=m.
\)

It follows that \(\mathcal{L}\) has chief length \(n\) if and only if it can be expressed as a direct sum of exactly \(n\) simple ideals.
\end{proof}

\section{Solvable Lie algebras over arbitrary fields}
\label{secsolvretr}
Recall that the unique solvable Lie algebra of chief length \(0\) is the trivial Lie algebra, whereas every solvable Lie algebra of chief length \(1\) is one-dimensional.

\begin{lemma}
Let \(\mathcal{L}\) be a solvable Lie algebra of chief length \(2\). Then
\(
\dim \mathcal{L} \geq 2.
\)
\label{sovbvol}
\end{lemma}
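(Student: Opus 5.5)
The plan is to argue by contradiction on the dimension, using only that the number of chief factors is bounded by the dimension. Suppose \(\mathcal{L}\) is solvable with \(l(\mathcal{L})=2\) and \(\dim\mathcal{L}\le 1\). If \(\dim\mathcal{L}=0\), then \(\mathcal{L}\) is trivial and, as observed after the definition of chief length, \(l(\mathcal{L})=0\), which is a contradiction. If \(\dim\mathcal{L}=1\), the only subspaces of \(\mathcal{L}\) are \(\{0\}\) and \(\mathcal{L}\). Hence the only ideals are \(\{0\}\) and \(\mathcal{L}\), and any strictly increasing chain of ideals from \(\{0\}\) to \(\mathcal{L}\) is just \(\{0\}\subsetneq\mathcal{L}\). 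The quotient \(\mathcal{L}/\{0\}\cong\mathcal{L}\) is a minimal ideal of itself, so this chain is a chief series of length \(1\). By the Corollary on the invariance of chief length, \(l(\mathcal{L})=1\), again a contradiction. Therefore \(\dim\mathcal{L}\ge 2\).

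A more conceptual version, which I would present as the main argument, is the general inequality \(l(\mathcal{L})\le\dim\mathcal{L}\). In a chief series \(\{0\}=I_0\subsetneq I_1\subsetneq\cdots\subsetneq I_n=\mathcal{L}\), each inclusion is strict, so \(\dim I_{k+1}-\dim I_k\ge 1\) for every \(k\). Summing these differences gives \(n\le\dim\mathcal{L}\), and taking \(n=2\) gives the claim. Solvability is not actually needed for this bound. It only plays a role in the paper's context, where it is contrasted with the chief length \(1\) case: there, a solvable Lie algebra of chief length \(1\) is one-dimensional, because a solvable algebra cannot be simple.

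There is no real obstacle here. The only point needing care is that a chief series is by definition a chain of strict inclusions, so each step raises the dimension by at least one. Once this is noted, the bound follows immediately.
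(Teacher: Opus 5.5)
Your proof is correct. Your first paragraph is essentially the paper's argument: the paper only observes that a one-dimensional Lie algebra has no proper non-trivial ideals and so cannot have a chief series of length \(2\). You also handle the zero-dimensional case, which the paper leaves implicit, and you explain why \(\{0\}\subsetneq\mathcal{L}\) is a chief series.

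Your preferred argument takes a genuinely different and more general route. You prove \(l(\mathcal{L})\le\dim\mathcal{L}\) by summing \(\dim I_{k+1}-\dim I_k\ge 1\) along any chief series. This needs no case split and shows that solvability plays no role. It gives a bound valid for every finite-dimensional Lie algebra, whereas the paper's argument only rules out the specific small dimensions needed here. It also supplies, for free, one of the two inequalities behind the paper's later theorem that a solvable Lie algebra over an algebraically closed field of characteristic \(0\) has chief length equal to its dimension. The reverse inequality there requires Lie's theorem.
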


\begin{proof}
A one-dimensional Lie algebra admits no proper non-trivial ideals. Hence, it cannot possess a chief series of length \(2\). Therefore, every solvable Lie algebra of chief length \(2\) has dimension at least \(2\).
\end{proof}

Recall that the derived series of a Lie algebra \(\mathcal{L}\), defined over a field of arbitrary characteristic, is given recursively by
\(
\mathcal{L}^{(0)}=\mathcal{L}, \qquad 
\mathcal{L}^{(1)}=[\mathcal{L},\mathcal{L}],
\)
and, for every \(i\in\mathbb{N}\),
\(
\mathcal{L}^{(i+1)}
=
[\mathcal{L}^{(i)},\mathcal{L}^{(i)}].
\)
If \(\mathcal{L}\) is solvable, the least integer \(n\in\mathbb{N}\) satisfying
\(
\mathcal{L}^{(n)}=\{0\}
\)
is called the \emph{solvable length} (or \emph{derived length}) of \(\mathcal{L}\).

\begin{proposition}\label{chieflengthtwo}
Let \(\mathcal{L}\) be a Lie algebra of chief length \(2\). Then the following assertions hold.

\begin{enumerate}
    \item If \(\mathcal{L}\) is solvable of solvable length \(n\), then
    \(
    1\leq n\leq 2.
    \)

    \item If \(\mathcal{L}\) is abelian, then
    \(
    \dim(\mathcal{L})=2.
    \)
\end{enumerate}
\end{proposition}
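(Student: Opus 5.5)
The proof will rest on Theorem~\ref{MAX}: when \(l(\mathcal{L})=2\), every proper ideal of \(\mathcal{L}\) is maximal. As the proof of that theorem shows, every nonzero proper ideal is then also minimal. Hence for any nonzero proper ideal \(I\), the chain \(\{0\}\subsetneq I\subsetneq\mathcal{L}\) is a chief series.

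\textbf{Part 1.} Let \(\mathcal{L}\) be solvable of solvable length \(n\). Since \(l(\mathcal{L})=2\neq 0\), the algebra is nonzero, so \(n\geq 1\). For the upper bound, consider \(\mathcal{L}^{(1)}=[\mathcal{L},\mathcal{L}]\).

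\emph{Case \(\mathcal{L}^{(1)}=\{0\}\).} Then \(\mathcal{L}\) is abelian and \(n=1\).

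\emph{Case \(\mathcal{L}^{(1)}\neq\{0\}\).} Solvability gives \(\mathcal{L}^{(1)}\neq\mathcal{L}\), so \(\mathcal{L}^{(1)}\) is a nonzero proper ideal. By the remark above it is a minimal ideal of \(\mathcal{L}\). Now \(\mathcal{L}^{(2)}=[\mathcal{L}^{(1)},\mathcal{L}^{(1)}]\) is an ideal of \(\mathcal{L}\) contained in \(\mathcal{L}^{(1)}\). It is an ideal of \(\mathcal{L}\) by the Jacobi identity, the standard fact that the derived series consists of ideals. By solvability it is properly contained in \(\mathcal{L}^{(1)}\). Minimality then forces \(\mathcal{L}^{(2)}=\{0\}\), so \(n=2\).

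The one point needing care is that \(\mathcal{L}^{(2)}\) is an ideal of all of \(\mathcal{L}\), not merely of \(\mathcal{L}^{(1)}\). I will verify this directly with the Jacobi identity: for \(x\in\mathcal{L}\) and \(a,b\in\mathcal{L}^{(1)}\),
\[
[x,[a,b]]=[[x,a],b]+[a,[x,b]],
\]
and both terms lie in \([\mathcal{L}^{(1)},\mathcal{L}^{(1)}]\).

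\textbf{Part 2.} Suppose \(\mathcal{L}\) is abelian. Then every subspace of \(\mathcal{L}\) is an ideal. The chain \(\{0\}\subsetneq I\subsetneq\mathcal{L}\) being a chief series means each factor has no nonzero proper subspaces, i.e.\ each factor is one-dimensional. Summing, \(\dim\mathcal{L}=2\).

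Alternatively, Lemma~\ref{sovbvol} gives \(\dim\mathcal{L}\geq 2\). If \(\dim\mathcal{L}\geq 3\), choose subspaces \(0\subsetneq J\subsetneq I\subsetneq\mathcal{L}\); these are ideals, and \(J\) is a proper ideal that is not maximal, contradicting Theorem~\ref{MAX}.

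No step here is a serious obstacle. The only genuine check is the ideal property of \(\mathcal{L}^{(2)}\) in Part 1, handled by the Jacobi identity computation above.
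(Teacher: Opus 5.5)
Your proof is correct and takes essentially the same route as the paper. In Part 1 the paper derives a contradiction from the chain \(\mathcal{L}^{(2)}\subsetneq\mathcal{L}^{(1)}\subsetneq\mathcal{L}\) via Theorem~\ref{MAX}, which is equivalent to your minimality argument; in Part 2 it uses exactly your alternative argument combined with Lemma~\ref{sovbvol}. One improvement on your side is the explicit Jacobi-identity check that \(\mathcal{L}^{(2)}\) is an ideal of \(\mathcal{L}\) itself; the paper only says it is an ideal of \(\mathcal{L}^{(1)}\) and leaves this point implicit.
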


\begin{proof}
\begin{enumerate}
    \item Suppose that
    \(
    \mathcal{L}^{(2)}\neq \{0\}.
    \)
    Since \(\mathcal{L}\) is solvable,
    \(
    \mathcal{L}^{(1)}=[\mathcal{L},\mathcal{L}]
    \)
    is a proper ideal of \(\mathcal{L}\). Moreover,
    \(
    \mathcal{L}^{(2)}
    =
    [\mathcal{L}^{(1)},\mathcal{L}^{(1)}]
    \)
    is a proper ideal of \(\mathcal{L}^{(1)}\). Consequently,
    \(
    \mathcal{L}^{(2)}
    \subsetneq
    \mathcal{L}^{(1)}
    \subsetneq
    \mathcal{L}.
    \)
    This contradicts Theorem~\ref{MAX}, since \(\mathcal{L}\) has chief length \(2\). Hence,
    \(
    \mathcal{L}^{(2)}=\{0\},
    \)
    and therefore the solvable length of \(\mathcal{L}\) satisfies
    \(
    1\leq n\leq 2.
    \)

    \item Assume that \(\mathcal{L}\) is abelian and that
    \(
    \dim(\mathcal{L})>2.
    \)
    Then \(\mathcal{L}\) admits a two-dimensional proper vector subspace \(V\). Since \(\mathcal{L}\) is abelian, every vector subspace is an ideal; hence \(V\) is a proper ideal of \(\mathcal{L}\).

    Furthermore, \(V\) contains a one-dimensional subspace \(W\), which is likewise an ideal of \(\mathcal{L}\). Thus,
    \(
    W
    \subsetneq
    V
    \subsetneq
    \mathcal{L},
    \)
    contradicting Theorem~\ref{MAX}. Therefore,
    \(
    \dim(\mathcal{L})\leq 2.
    \)
    On the other hand, Lemma~\ref{sovbvol} yields
    \(
    \dim(\mathcal{L})\geq 2.
    \)
    Consequently,
    \(
    \dim(\mathcal{L})=2.
    \)
\end{enumerate}
\end{proof}

\begin{proposition}
Every two-dimensional Lie algebra is of chief length \(2\).
\end{proposition}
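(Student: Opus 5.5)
Over any field, a two-dimensional Lie algebra is either abelian or isomorphic to the non-abelian algebra with basis \(\{x,y\}\) and \([x,y]=y\). I will verify Theorem~\ref{MAX} in each case, since it reduces the claim to checking that every proper ideal is maximal. A cleaner alternative avoids the classification altogether: exhibit a chain \(\{0\}\subsetneq I\subsetneq\mathcal{L}\) and check that it is a chief series. The chief length is then \(2\) by the Corollary to Towers' theorem.

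First I show that \(\mathcal{L}\) has a one-dimensional ideal. If \(\mathcal{L}\) is abelian, any one-dimensional subspace works. Otherwise \([\mathcal{L},\mathcal{L}]\) is spanned by \([x,y]\) for a basis \(\{x,y\}\). It is therefore nonzero and one-dimensional, and as a derived algebra it is an ideal.

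Next, let \(I\) be any one-dimensional ideal and consider the chain \(\{0\}=I_0\subsetneq I_1=I\subsetneq I_2=\mathcal{L}\). Each factor has dimension \(1\), so neither factor has a nonzero proper subspace, and in particular neither has a nonzero proper ideal of the relevant quotient. Hence \(I_1/I_0\) is a minimal ideal of \(\mathcal{L}\) and \(I_2/I_1\) is a minimal ideal of \(\mathcal{L}/I\). Both factors are chief factors, the chain is a chief series, and \(l(\mathcal{L})=2\).

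The only step needing any care is the existence of a one-dimensional ideal. Everything else is a dimension count, since a one-dimensional factor is automatically minimal. Because the chain has strict inclusions, the length is exactly \(2\) and not smaller, which is consistent with Lemma~\ref{sovbvol}.
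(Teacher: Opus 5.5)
Your proof is correct, and it takes a somewhat different route from the paper. The paper's proof is one line: every (nonzero) proper ideal of a two-dimensional \(\mathcal{L}\) is one-dimensional, hence maximal, so Theorem~\ref{MAX} gives \(l(\mathcal{L})=2\). You bypass Theorem~\ref{MAX}. You produce an explicit one-dimensional ideal: any line if \(\mathcal{L}\) is abelian, and \([\mathcal{L},\mathcal{L}]=\mathrm{span}\{[x,y]\}\) otherwise. You then check by a dimension count that \(\{0\}\subsetneq I\subsetneq\mathcal{L}\) is a chief series, and conclude from the invariance of chief length (the corollary to Towers' theorem).

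The paper's route is shorter once Theorem~\ref{MAX} is available. However, it never checks that a nonzero proper ideal exists, and Theorem~\ref{MAX} silently needs this. A simple or one-dimensional algebra satisfies ``every proper ideal is maximal'' (its only proper ideal \(\{0\}\) is maximal), yet it has chief length \(1\). Your existence step amounts to showing that a two-dimensional algebra cannot be simple, because \(\dim[\mathcal{L},\mathcal{L}]\le 1\). That step closes this gap, so your argument is the more self-contained and rigorous of the two. Your opening plan, verifying Theorem~\ref{MAX} case by case via the classification, is superseded by what you actually do and can be dropped.
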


\begin{proof}
Let \(\mathcal{L}\) be a two-dimensional Lie algebra. Every proper ideal of \(\mathcal{L}\) is necessarily one-dimensional and therefore maximal. Hence, by Theorem~\ref{MAX}, \(\mathcal{L}\) has chief length \(2\).
\end{proof}

\begin{lemma}\label{decsolv}
Let \(\mathcal{L}\) be a solvable Lie algebra, defined over a field of arbitrary characteristic, and suppose that \(\mathcal{L}\) has chief length \(2\). If
\(
\mathcal{L}^{(1)}=[\mathcal{L},\mathcal{L}]\neq \{0\},
\)
then there exists
\(
0\neq a\in \mathcal{L}\setminus \mathcal{L}^{(1)}
\)
such that
\(
\mathcal{L}
=
\langle a\rangle \ltimes \mathcal{L}^{(1)},
\)
where \(\langle a\rangle\) denotes the one-dimensional Lie subalgebra generated by \(a\).
\end{lemma}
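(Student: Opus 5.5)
The plan is to show that \(\mathcal{L}^{(1)}\) has codimension exactly one in \(\mathcal{L}\). Any complement of it is then one-dimensional and automatically a subalgebra, which gives the semidirect decomposition.

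First, since \(\mathcal{L}\) is solvable and \(\mathcal{L}\neq\{0\}\), the derived algebra \(\mathcal{L}^{(1)}\) is a proper ideal. Otherwise \(\mathcal{L}^{(k)}=\mathcal{L}\) for every \(k\), which contradicts solvability. By hypothesis \(\mathcal{L}^{(1)}\neq\{0\}\). So \(\mathcal{L}^{(1)}\) is a nonzero proper ideal.

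Next, any subspace \(V\) with \(\mathcal{L}^{(1)}\subseteq V\subseteq\mathcal{L}\) is an ideal of \(\mathcal{L}\), because
\[
[\mathcal{L},V]\subseteq[\mathcal{L},\mathcal{L}]=\mathcal{L}^{(1)}\subseteq V.
\]
Suppose \(\dim(\mathcal{L}/\mathcal{L}^{(1)})\geq 2\). Then we can choose a subspace \(V\) with \(\mathcal{L}^{(1)}\subsetneq V\subsetneq\mathcal{L}\). This \(V\) is a proper ideal strictly containing the proper ideal \(\mathcal{L}^{(1)}\), so \(\mathcal{L}^{(1)}\) is not maximal. That contradicts Theorem~\ref{MAX}. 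Hence \(\dim(\mathcal{L}/\mathcal{L}^{(1)})=1\). This is the only substantive step, and it rests entirely on Theorem~\ref{MAX}.

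Finally, pick any \(a\in\mathcal{L}\setminus\mathcal{L}^{(1)}\); then \(a\neq 0\). As vector spaces,
\[
\mathcal{L}=\langle a\rangle\dotplus\mathcal{L}^{(1)}.
\]
The line \(\langle a\rangle\) is a subalgebra because \([a,a]=0\). The summand \(\mathcal{L}^{(1)}\) is an ideal, and the two intersect trivially. Together these give \(\mathcal{L}=\langle a\rangle\ltimes\mathcal{L}^{(1)}\), with \(a\) acting on \(\mathcal{L}^{(1)}\) by \(\operatorname{ad}a\). No real obstacle is expected beyond checking the codimension.
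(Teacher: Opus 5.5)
Your proposal is correct and follows the same route as the paper. The paper takes a vector-space complement $V$ of $\mathcal{L}^{(1)}$, assumes $\dim V\geq 2$, and exhibits the intermediate ideal $V_2+\mathcal{L}^{(1)}$; this is your observation that every subspace containing $\mathcal{L}^{(1)}$ is an ideal, combined with Theorem~\ref{MAX} to force codimension one, with your version phrased slightly more cleanly via $\dim(\mathcal{L}/\mathcal{L}^{(1)})$ and explicitly using $\mathcal{L}^{(1)}\neq\{0\}$ so that Theorem~\ref{MAX} applies.
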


\begin{proof}
Since \(\mathcal{L}\) is solvable,
\(
\mathcal{L}^{(1)}=[\mathcal{L},\mathcal{L}]
\)
is a proper ideal of \(\mathcal{L}\). Hence, there exists a vector subspace \(V\) of \(\mathcal{L}\) such that
\(
\mathcal{L}
=
V+\mathcal{L}^{(1)}
\)
and
\(
V\cap \mathcal{L}^{(1)}=\{0\}.
\)

We claim that \(V\) is one-dimensional. Suppose, to the contrary, that
\(
\dim(V)\geq 2.
\)
Then there exist nonzero subspaces \(V_{1}\) and \(V_{2}\) satisfying
\(
V
=
V_{1}\dotplus V_{2}.
\)
Consequently,
\(
\mathcal{L}^{(1)}
\subsetneq
V_{2}+\mathcal{L}^{(1)}
\subsetneq
\mathcal{L}.
\)

Let \(x\in \mathcal{L}\) and \(v\in V_{2}+\mathcal{L}^{(1)}\). Since
\(
[x,v]\in [\mathcal{L},\mathcal{L}]
=
\mathcal{L}^{(1)},
\)
it follows that
\(
[x,v]\in V_{2}+\mathcal{L}^{(1)}.
\)
Therefore,
\(
V_{2}+\mathcal{L}^{(1)}
\)
is an ideal of \(\mathcal{L}\).

Hence,
\(
\mathcal{L}^{(1)}
\subsetneq
V_{2}+\mathcal{L}^{(1)}
\subsetneq
\mathcal{L},
\)
which contradicts Theorem~\ref{MAX}, since \(\mathcal{L}\) has chief length \(2\). Therefore,
\(
\dim(V)=1.
\)

Thus, there exists
\(
0\neq a\in \mathcal{L}\setminus \mathcal{L}^{(1)}
\)
such that
\(
V=\langle a\rangle.
\)
Since \(\mathcal{L}^{(1)}\) is an ideal of \(\mathcal{L}\), it follows that
\(
\mathcal{L}
=
\langle a\rangle \ltimes \mathcal{L}^{(1)}.
\)
\end{proof}

\begin{theorem} \label{vao2}
Let \(\mathcal{L}\) be a solvable Lie algebra over a field of arbitrary characteristic such that
\(
\mathcal{L}^{(1)}=[\mathcal{L},\mathcal{L}] \neq \{0\}.
\)
Then the following statements are equivalent:
\begin{enumerate}
    \item \(\mathcal{L}\) is of chief length \(2\).
    
    \item There exist a one-dimensional Lie algebra \(\langle a \rangle\) and an abelian Lie algebra \(\mathcal{L}^{(1)}\) such that
    \(
    \mathcal{L}=\langle a \rangle \ltimes \mathcal{L}^{(1)},
    \)
    where the action of \(\mathrm{ad}_a\) on \(\mathcal{L}^{(1)}\) is irreducible.
\end{enumerate}
\end{theorem}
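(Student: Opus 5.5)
We prove the two implications separately, using Theorem~\ref{MAX}, Proposition~\ref{chieflengthtwo} and Lemma~\ref{decsolv}.

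\emph{(1)$\Rightarrow$(2).} Assume \(l(\mathcal{L})=2\) and \(\mathcal{L}^{(1)}\neq\{0\}\). Lemma~\ref{decsolv} gives \(0\neq a\notin\mathcal{L}^{(1)}\) with \(\mathcal{L}=\langle a\rangle\ltimes\mathcal{L}^{(1)}\). Next we show \(\mathcal{L}^{(1)}\) is abelian. Since \(\mathcal{L}^{(1)}\neq\{0\}\), the solvable length is at least \(2\), so part~1 of Proposition~\ref{chieflengthtwo} forces \(\mathcal{L}^{(2)}=\{0\}\). Hence \([\mathcal{L}^{(1)},\mathcal{L}^{(1)}]=\{0\}\).

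For irreducibility, let \(W\subseteq\mathcal{L}^{(1)}\) be an \(\mathrm{ad}_a\)-invariant subspace. Because \(\mathcal{L}^{(1)}\) is abelian and \(\mathcal{L}=\langle a\rangle\dotplus\mathcal{L}^{(1)}\), we get
\[
[\mathcal{L},W]=[\langle a\rangle,W]+[\mathcal{L}^{(1)},W]=[\langle a\rangle,W]\subseteq W.
\]
So \(W\) is an ideal of \(\mathcal{L}\). Theorem~\ref{MAX} says every proper ideal is maximal. The ideal \(\mathcal{L}^{(1)}\) is proper, hence maximal. If \(\{0\}\neq W\subsetneq\mathcal{L}^{(1)}\), then \(W\) is a proper ideal strictly contained in the proper ideal \(\mathcal{L}^{(1)}\), so \(W\) is not maximal, a contradiction. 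Therefore \(W=\{0\}\) or \(W=\mathcal{L}^{(1)}\), and \(\mathrm{ad}_a\) acts irreducibly on \(\mathcal{L}^{(1)}\).

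\emph{(2)$\Rightarrow$(1).} Assume \(\mathcal{L}=\langle a\rangle\ltimes A\), where \(A=\mathcal{L}^{(1)}\) is abelian and \(\mathrm{ad}_a|_A\) is irreducible. We exhibit the chief series \(\{0\}\subsetneq A\subsetneq\mathcal{L}\).

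\begin{itemize}
\item \(\mathcal{L}/A\) is one-dimensional, so it has no nonzero proper ideals. Thus \(\mathcal{L}/A\) is a minimal ideal of itself.
\item Let \(\{0\}\neq J\subseteq A\) be an ideal of \(\mathcal{L}\). Then \(J\) is \(\mathrm{ad}_a\)-invariant, so irreducibility gives \(J=A\). Hence \(A\) is a minimal ideal of \(\mathcal{L}\).
\end{itemize}

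Both factors are chief factors, so this chain is a chief series and \(l(\mathcal{L})=2\) by the Corollary following Towers' theorem. This direction avoids Theorem~\ref{MAX}, so we never need to classify all ideals of \(\mathcal{L}\).

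The main obstacle is in (1)$\Rightarrow$(2): an \(\mathrm{ad}_a\)-invariant subspace becomes an ideal only because \(\mathcal{L}^{(1)}\) is abelian. That is why we establish abelianness first, via part~1 of Proposition~\ref{chieflengthtwo}. One further point needs checking: the earlier proof of that part only contradicts the chain \(\mathcal{L}^{(2)}\subsetneq\mathcal{L}^{(1)}\subsetneq\mathcal{L}\) when \(\mathcal{L}^{(2)}\neq\{0\}\). In that case \(\mathcal{L}^{(2)}\) would be a proper ideal strictly inside the proper ideal \(\mathcal{L}^{(1)}\), hence not maximal, contradicting Theorem~\ref{MAX}. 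This is exactly the situation that case excludes, so the step is sound.
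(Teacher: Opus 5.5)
Your proof is correct.

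\textbf{(1)$\Rightarrow$(2).} This direction follows the paper. You use Lemma~\ref{decsolv} for the splitting and part~1 of Proposition~\ref{chieflengthtwo} to get $\mathcal{L}^{(2)}=\{0\}$. You then observe that an $\mathrm{ad}_a$-invariant subspace of the abelian ideal $\mathcal{L}^{(1)}$ is an ideal of $\mathcal{L}$. Theorem~\ref{MAX} then rules this out unless the subspace is $\{0\}$ or $\mathcal{L}^{(1)}$.

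\textbf{(2)$\Rightarrow$(1).} Here your route differs from the paper's.
\begin{itemize}
\item The paper takes an arbitrary nonzero proper ideal $I$ and uses irreducibility to get $I\cap\mathcal{L}^{(1)}\in\{\{0\},\mathcal{L}^{(1)}\}$. It rules out the first case because $I$ would then be a one-dimensional ideal complementary to $\mathcal{L}^{(1)}$ and centralising it (Remark~\ref{nonoverlapid}), which makes $\mathcal{L}$ abelian. So $\mathcal{L}^{(1)}$ is the unique nonzero proper ideal, and only then does the paper invoke Theorem~\ref{MAX}.
\item You instead check directly that the single chain $\{0\}\subsetneq\mathcal{L}^{(1)}\subsetneq\mathcal{L}$ is a chief series. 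The top factor has codimension one, and the bottom factor is minimal by irreducibility. You then read off $l(\mathcal{L})=2$ from the Corollary that all chief series have the same length.
\end{itemize}

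Your version is shorter and needs neither Remark~\ref{nonoverlapid} nor a non-abelianness check. It also does not use the abelianness of $\mathcal{L}^{(1)}$ in this direction. The paper's version yields more information: $\mathcal{L}^{(1)}$ is the only nonzero proper ideal of $\mathcal{L}$.
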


\begin{proof}
\((1)\Rightarrow(2)\)

Assume that \(\mathcal{L}\) is of chief length \(2\). By Lemma \ref{decsolv},
\(
\mathcal{L}=\langle a \rangle \ltimes \mathcal{L}^{(1)}.
\)
Moreover, Proposition \ref{chieflengthtwo} implies that \(\mathcal{L}^{(1)}\) is abelian. It therefore remains to prove that the action of \(\mathrm{ad}_a\) on \(\mathcal{L}^{(1)}\) is irreducible.

Suppose, for contradiction, that the action is reducible. Then there exists a non-trivial proper subspace
\(
V\subsetneq \mathcal{L}^{(1)}
\)
such that
\(
[a,V]\subseteq V.
\)
Since \(\mathcal{L}^{(1)}\) is abelian,
\(
[\mathcal{L}^{(1)},V]\subseteq V.
\)
Consequently,
\(
[\mathcal{L},V]\subseteq V,
\)
and therefore \(V\) is an ideal of \(\mathcal{L}\). Since \(\mathcal{L}\) has chief length \(2\), every proper non-trivial ideal of \(\mathcal{L}\) is maximal by Theorem \ref{MAX}. This contradicts the existence of the proper non-trivial subspace
\(
V\subsetneq \mathcal{L}^{(1)}.
\)
Hence, the action of \(\mathrm{ad}_a\) on \(\mathcal{L}^{(1)}\) is irreducible.

\medskip

\((2)\Rightarrow(1)\)

Assume that
\(
\mathcal{L}=\langle a \rangle \ltimes \mathcal{L}^{(1)},
\)
where \(\mathcal{L}^{(1)}\) is abelian and the action of \(\mathrm{ad}_a\) on \(\mathcal{L}^{(1)}\) is irreducible.

Since 
\(
[\mathcal{L},\mathcal{L}]=\mathcal{L}^{(1)},
\)
 \(\mathcal{L}\) is solvable.

Since the action of \(\mathrm{ad}_a\) is irreducible and non-trivial, there exists
\(
x\in \mathcal{L}^{(1)}
\)
such that
\(
[a,x]=\mathrm{ad}_a(x)\neq 0.
\)
Thus, \(\mathcal{L}\) is non-abelian.

Let \(I\) be a proper non-trivial ideal of \(\mathcal{L}\). Then
\(
I\cap \mathcal{L}^{(1)}
\)
is an \(\mathrm{ad}_a\)-stable subspace of \(\mathcal{L}^{(1)}\). By irreducibility,
\(
I\cap \mathcal{L}^{(1)}=\{0\}
\quad \text{or} \quad
I\cap \mathcal{L}^{(1)}=\mathcal{L}^{(1)}.
\)

Assume first that
\(
I\cap \mathcal{L}^{(1)}=\{0\}.
\)
Since
\(
\mathcal{L}=\langle a \rangle \ltimes \mathcal{L}^{(1)},
\)
it follows that
\(
\mathcal{L}=I\dotplus \mathcal{L}^{(1)}.
\)
Because both \(I\) and \(\mathcal{L}^{(1)}\) are ideals, Remark \ref{nonoverlapid} implies that
\(
[I,\mathcal{L}^{(1)}]=\{0\}.
\)
Since \(I\) is one-dimensional and \(\mathcal{L}^{(1)}\) is abelian, it follows that \(\mathcal{L}\) is abelian, a contradiction.

Therefore,
\(
I\cap \mathcal{L}^{(1)}=\mathcal{L}^{(1)},
\)
and hence
\(
\mathcal{L}^{(1)}\subseteq I.
\)
Since \(I\) is proper and
\(
\mathcal{L}=\langle a \rangle \ltimes \mathcal{L}^{(1)},
\)
we obtain
\(
I=\mathcal{L}^{(1)}.
\)
Thus, \(\mathcal{L}^{(1)}\) is the unique proper non-trivial ideal of \(\mathcal{L}\). Consequently, every proper non-trivial ideal of \(\mathcal{L}\) is maximal. By Theorem \ref{MAX}, \(\mathcal{L}\) is of chief length \(2\).
\end{proof}

It is worth noting (but, perhaps, is not very surprising) that the classification of solvable Lie algebras of chief length 2 "almost" coincides with the classification of nonabelian solvable Lie algebras all whose proper subalgebras are abelian. See \cite{russia} and \cite{ernest} where a slightly more general classification of solvable Lie algebras all whose proper subalgebras are nilpotent is obtained. Namely, we have the following:

\begin{corollary}
A solvable Lie algebra of chief length \(2\) is a minimal non-abelian solvable Lie algebra. Furthermore, every minimal non-abelian solvable Lie algebra is either of chief length \(2\) or isomorphic to the three-dimensional Heisenberg algebra.
\end{corollary}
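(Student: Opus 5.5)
The first assertion is about \emph{non-abelian} solvable algebras of chief length \(2\), i.e.\ those with \(\mathcal{L}^{(1)}\neq\{0\}\): the abelian two-dimensional algebra also has chief length \(2\). I would therefore prove both halves of the corollary in that setting, using the normal form of Theorem~\ref{vao2}. A \emph{minimal non-abelian} Lie algebra means a non-abelian one all of whose proper subalgebras are abelian.

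\textbf{First assertion.} By Theorem~\ref{vao2} we may write \(\mathcal{L}=\langle a\rangle\ltimes A\), where \(A=\mathcal{L}^{(1)}\) is abelian and \(\mathrm{ad}_a\) acts irreducibly on \(A\). The proof of that theorem already shows that \(\mathcal{L}\) is non-abelian. Let \(H\) be a proper subalgebra.
\begin{itemize}
\item If \(H\subseteq A\), then \(H\) is abelian.
\item Otherwise \(H\) contains an element \(a+v\) with \(v\in A\), after rescaling. Since \(A\) is abelian, \(\mathrm{ad}_{a+v}\) and \(\mathrm{ad}_a\) agree on \(A\). Hence \(H\cap A\) is an \(\mathrm{ad}_a\)-stable subspace, so it is \(\{0\}\) or \(A\).
\item If \(H\cap A=A\), then \(H\supseteq\langle a+v\rangle+A=\mathcal{L}\), contradicting properness.
\item If \(H\cap A=\{0\}\), then \(\dim H\le 1\), since \(A\) has codimension \(1\). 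So \(H\) is abelian.
\end{itemize}

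\textbf{Second assertion: setup.} Let \(\mathcal{L}\) be solvable, non-abelian, with all proper subalgebras abelian.
\begin{itemize}
\item \(\mathcal{L}^{(1)}\) is a proper ideal. Choose a hyperplane \(M\supseteq\mathcal{L}^{(1)}\); it is an ideal, being a subspace containing \([\mathcal{L},\mathcal{L}]\).
\item \(M\) is proper, hence abelian. Picking \(a\notin M\) gives \(\mathcal{L}=\langle a\rangle\ltimes M\).
\item Put \(T=\mathrm{ad}_a|_M\). Then \(T\neq 0\), because \(\mathcal{L}\) is non-abelian, and \(\mathcal{L}^{(1)}=T(M)\).
\end{itemize}
The key observation is this: if \(W\subsetneq M\) is \(T\)-invariant, then \(\langle a\rangle+W\) is a proper subalgebra, hence abelian, so \(T(W)=0\).

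\textbf{Second assertion: case split on \(K=\ker T\).} Note that \(K\) is \(T\)-invariant and proper.
\begin{itemize}
\item \(K=\{0\}\). The observation forces every proper invariant subspace to be \(\{0\}\), so \(T\) is irreducible. Moreover \(\mathcal{L}^{(1)}=T(M)=M\) is abelian. Theorem~\ref{vao2} then gives chief length \(2\).
\item \(K\neq\{0\}\). Then \(T(M)\) is a proper invariant subspace, so the observation gives \(T^2=0\), i.e.\ \(T(M)\subseteq K\). For any \(v\in M\), the subspace \(K+\langle v\rangle\) is therefore invariant. If it were proper, the observation would give \(Tv=0\). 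Hence \(K\) has codimension \(1\) in \(M\), and \(T\) has rank one.
\item Choose \(v\) with \(z=Tv\neq 0\), so \(z\in K\). Every element of \(K\) commutes with \(a\) and with \(M\), so \(K\) is central in \(\mathcal{L}\).
\item If \(\dim M\ge 3\), then \(\langle a,v,z\rangle\) is a proper subalgebra with \([a,v]=z\neq 0\), a contradiction. Hence \(\dim M=2\) and \(\mathcal{L}=\langle a,v,z\rangle\) with \([a,v]=z\) central, which is the Heisenberg algebra.
\end{itemize}

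The main obstacle is the \(K\neq\{0\}\) case: showing \(T^2=0\) and that \(K\) has codimension one, via the right choice of invariant subspaces. I would handle it by repeatedly applying the observation that \(\langle a\rangle+W\) is abelian for every proper invariant \(W\), first to \(W=T(M)\) and then to \(W=K+\langle v\rangle\).
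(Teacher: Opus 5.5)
Your proof is correct, and it takes a different route from the paper. The paper gives no proof of this corollary: it places the statement after Theorem~\ref{vao2} and points to the classifications in \cite{russia} and \cite{ernest}. Those classify solvable Lie algebras all of whose proper subalgebras are \emph{nilpotent}. So the implied argument is to match the normal form $\langle a\rangle\ltimes\mathcal{L}^{(1)}$ of Theorem~\ref{vao2} against that external list.

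You instead derive both halves directly from Theorem~\ref{vao2}:
\begin{enumerate}
\item For the first half, a proper subalgebra $H\not\subseteq A$ meets $A$ in an $\mathrm{ad}_a$-stable subspace. Irreducibility then forces $\dim H\le 1$.
\item For the second half, you split off an abelian hyperplane ideal $M\supseteq\mathcal{L}^{(1)}$. Every proper $T$-invariant $W\subsetneq M$ gives a proper subalgebra $\langle a\rangle+W$, so $T(W)=0$. This single observation, applied to $T(M)$ and to $K+\langle v\rangle$, yields irreducibility when $\ker T=\{0\}$ and the Heisenberg algebra otherwise.
\end{enumerate}

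What each approach buys: your argument is self-contained and visibly independent of the field. The citation route is shorter, but it requires checking that results stated for the \emph{minimal non-nilpotent} condition apply over the paper's arbitrary field and specialise correctly.

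In the codimension step, the phrase ``hence $K$ has codimension $1$'' is compressed. The actual inference is that for a $v$ with $Tv\neq 0$, the invariant subspace $K+\langle v\rangle$ cannot be proper. It is worth writing this out.

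Your restriction to $\mathcal{L}^{(1)}\neq\{0\}$ is a genuine correction, not a convenience. By the paper's own proposition, the two-dimensional abelian Lie algebra has chief length $2$, yet it is not minimal non-abelian. So the first sentence of the corollary is false as literally stated, and your reading is the correct one.
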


Consequently, a solvable Lie algebra is of chief length \(0\) if and only if it is trivial, and it is of chief length \(1\) if and only if it is one-dimensional. Moreover, a non-abelian solvable Lie algebra \(\mathcal{L}\) is of chief length \(2\) if and only if
\(
\mathcal{L}=\langle a \rangle \ltimes \mathcal{L}^{(1)},
\)
where \(\langle a\rangle\) is one-dimensional, \(\mathcal{L}^{(1)}\) is abelian, and \(\mathcal{L}^{(1)}\) is an irreducible \(\langle a\rangle\)-module under the action induced by \(\mathrm{ad}_a\).

These observations lead to the following.

Let \(A\) be an abelian ideal of a Lie algebra \(\mathcal{L}\). The quotient Lie algebra \(\mathcal{L}/A\) acts on \(A\) by
\(
(x+A)\cdot a=[x,a]
\)
for all \(x\in \mathcal{L}\) and \(a\in A\).

This action is well defined. Indeed, if
\(
x+A=y+A,
\)
then \(x-y\in A\). Since \(A\) is abelian, we have
\(
[x-y,a]=0
\)
for every \(a\in A\). Hence
\(
[x,a]=[y,a],
\)
so the action does not depend on the choice of representative.

\begin{theorem}
Let \(\mathcal{L}\) be a solvable Lie algebra over a field of arbitrary characteristic. Then \(\mathcal{L}\) has chief length \(n\) if and only if
\(
\mathcal{L}=\mathcal{H}\dotplus A,
\)
where \(A\) is an abelian ideal of \(\mathcal{L}\), \(\mathcal{H}\) is a vector space complement of \(A\) in \(\mathcal{L}\), \(A\) is an irreducible \(\mathcal{L}/A\)-module under the above action, and
\(
\mathcal{L}/A
\)
has chief length \(n-1\).
\label{ind1}
\end{theorem}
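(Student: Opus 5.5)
We argue both directions through the Jordan–Hölder type result (Theorem~3.1 of \cite{lgchv} and its corollary), using two standard facts. First, the ideals of \(\mathcal{L}/A\) correspond to the ideals of \(\mathcal{L}\) containing \(A\). Second, for ideals \(A\subseteq B\subseteq C\) of \(\mathcal{L}\), the quotient \(C/B\) is a minimal ideal of \(\mathcal{L}/B\) if and only if \((C/A)/(B/A)\) is a minimal ideal of \((\mathcal{L}/A)/(B/A)\). Together these show that a chief series \(\{0\}=\bar I_0\subsetneq\cdots\subsetneq\bar I_{m}=\mathcal{L}/A\) of \(\mathcal{L}/A\) lifts to a chain \(A=I_0'\subsetneq\cdots\subsetneq I_m'=\mathcal{L}\) of ideals of \(\mathcal{L}\) whose successive factors are chief factors of \(\mathcal{L}\). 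We also need to translate irreducibility. For an abelian ideal \(A\), a subspace \(W\subseteq A\) is an ideal of \(\mathcal{L}\) if and only if \([x,W]\subseteq W\) for all \(x\in\mathcal{L}\). Since \(A\) acts trivially on itself, this holds if and only if \(W\) is an \(\mathcal{L}/A\)-submodule under the action defined above. Hence \(A\) is an irreducible \(\mathcal{L}/A\)-module if and only if \(A\neq\{0\}\) and \(A\) is a minimal ideal of \(\mathcal{L}\), that is, if and only if \(A/\{0\}\) is a chief factor.

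\((\Leftarrow)\) Assume \(A\) is an abelian ideal that is irreducible as an \(\mathcal{L}/A\)-module and that \(l(\mathcal{L}/A)=n-1\). By the translation above, \(\{0\}\subsetneq A\) is a chief factor. Take a chief series of \(\mathcal{L}/A\) of length \(n-1\) and lift it as described. Prepending \(\{0\}\) gives a chief series of \(\mathcal{L}\) of length \(1+(n-1)=n\). By the corollary on invariance of length, \(l(\mathcal{L})=n\). The complement \(\mathcal{H}\) plays no structural role; it exists because any subspace has a vector space complement. Solvability is not needed in this direction.

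\((\Rightarrow)\) Let \(l(\mathcal{L})=n\) with \(n\geq1\); the case \(n=0\) is degenerate and excluded, since the statement requires \(A\) irreducible and hence nonzero. Take a chief series \(\{0\}=I_0\subsetneq I_1\subsetneq\cdots\subsetneq I_n=\mathcal{L}\) and set \(A=I_1\), a minimal ideal of \(\mathcal{L}\).

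The key step, and the main obstacle, is to show that \(A\) is abelian. Here solvability is used. The derived algebra \(A^{(1)}=[A,A]\) is an ideal of \(\mathcal{L}\), by the Jacobi identity, since \(A\) is an ideal. It is contained in \(A\), and since \(A\) is solvable and nonzero, \(A^{(1)}\neq A\). Minimality of \(A\) then forces \(A^{(1)}=\{0\}\), so \(A\) is abelian. By the translation above, minimality also makes \(A\) an irreducible \(\mathcal{L}/A\)-module.

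Finally, the images \(I_1/A\subsetneq I_2/A\subsetneq\cdots\subsetneq I_n/A\) form a chain in \(\mathcal{L}/A\) starting at \(\{0\}=I_1/A\). By the correspondence, each of its factors is a chief factor of \(\mathcal{L}/A\). So it is a chief series of \(\mathcal{L}/A\) of length \(n-1\), and \(l(\mathcal{L}/A)=n-1\). Choosing any vector space complement \(\mathcal{H}\) of \(A\) gives \(\mathcal{L}=\mathcal{H}\dotplus A\).
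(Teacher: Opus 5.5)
Your proof is correct. Its overall structure matches the paper's: take \(A=I_1\) from a chief series, pass to \(\mathcal{L}/A\) via the correspondence theorem, and for the converse lift a chief series of \(\mathcal{L}/A\) through \(\pi^{-1}\) and prepend \(\{0\}\).

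The genuine difference is in how you show that \(A\) is abelian. The paper picks \(m\) with \(\mathcal{L}^{(m)}\) abelian, takes a chief series passing through \(\mathcal{L}^{(m)}\), and concludes that \(I_1\subseteq\mathcal{L}^{(m)}\) is abelian. This tacitly requires \(\mathcal{L}^{(m)}\neq\{0\}\), i.e.\ \(\mathcal{L}^{(m)}\) must be the last non-zero term of the derived series; otherwise \(p=0\) and nothing follows about \(I_1\). It also uses, without proof, the fact that any chain of ideals refines to a chief series.

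You instead prove that every minimal ideal of a solvable Lie algebra is abelian, because \([A,A]\) is an ideal of \(\mathcal{L}\) properly contained in \(A\). This works for an arbitrary chief series and needs no refinement argument. It also gives the stronger conclusion that the first term of every chief series is a valid choice of \(A\). The paper's choice has the modest advantage of locating \(A\) inside the last non-zero derived term.

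Your explicit equivalence, that \(A\) is an irreducible \(\mathcal{L}/A\)-module if and only if \(A\) is a minimal ideal of \(\mathcal{L}\), handles irreducibility in both directions at once. Your remark that solvability is used only in \((\Rightarrow)\) is accurate.
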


\begin{proof}
\((\Rightarrow)\)
Assume that \(\mathcal{L}\) has chief length \(n\). Since \(\mathcal{L}\) is solvable, there exists \(m\in \mathbb{N}\) such that
\(
\mathcal{L}^{(m)}
\)
is abelian. Let
\(
\{0\}=I_0\subsetneq I_1\subsetneq \cdots \subsetneq I_n=\mathcal{L}
\)
be a chief series of \(\mathcal{L}\) passing through
\(
\mathcal{L}^{(m)},
\)
that is, there exists \(p\) such that
\(
I_p=\mathcal{L}^{(m)}.
\)
Then \(I_1\) is abelian. Set
\(
A=I_1.
\)
It follows immediately that
\(
\mathcal{L}/A
\)
has chief length \(n-1\).

Suppose that \(A\) is reducible as an \(\mathcal{L}/A\)-module. Then there exists a non-trivial proper submodule
\(
M\subsetneq A.
\)
For every \(x\in \mathcal{L}\),
\(
(x+A)\cdot M=[x,M]\subseteq M.
\)
Hence, \(M\) is an ideal of \(\mathcal{L}\) strictly contained in \(A\), contradicting the fact that
\(
A/\{0\}=I_1/I_0
\)
is a chief factor. Therefore, \(A\) is an irreducible \(\mathcal{L}/A\)-module.

Finally, since \(A\) is an ideal of \(\mathcal{L}\), there exists a vector space complement \(\mathcal{H}\) of \(A\) in \(\mathcal{L}\) such that
\(
\mathcal{L}=\mathcal{H}\dotplus A.
\)

\medskip

\((\Leftarrow)\)
Assume that
\(
\mathcal{L}=\mathcal{H}\dotplus A,
\)
where \(A\) is an abelian ideal of \(\mathcal{L}\),
\(
A
\)
is an irreducible \(\mathcal{L}/A\)-module, and
\(
\mathcal{L}/A
\)
has chief length \(n-1\).

Let
\(
\{0\}=I_0\subsetneq I_1\subsetneq \cdots \subsetneq I_{n-1}=\mathcal{L}/A
\)
be a chief series of
\(
\mathcal{L}/A,
\)
and let
\(
\pi:\mathcal{L}\rightarrow \mathcal{L}/A
\)
denote the canonical projection. Define
\(
J_0=\{0\}, \qquad J_1=A,
\)
and, for \(1\leq i\leq n-1\),
\(
J_{i+1}=\pi^{-1}(I_i).
\)
Then
\(
\{0\}=J_0\subsetneq J_1\subsetneq \cdots \subsetneq J_n=\mathcal{L}.
\)

Since \(A\) is irreducible as an \(\mathcal{L}/A\)-module, it contains no non-trivial proper ideal of \(\mathcal{L}\). Hence,
\(
A/\{0\}
\)
is a chief factor.

By the correspondence theorem, the ideals of \(\mathcal{L}\) containing \(A\) are in bijective correspondence with the ideals of
\(
\mathcal{L}/A.
\)
Therefore, each factor
\(
J_{i+1}/J_i
\)
is chief. Consequently,
\(
\{0\}=J_0\subsetneq J_1\subsetneq \cdots \subsetneq J_n=\mathcal{L}
\)
is a chief series of \(\mathcal{L}\). Thus, \(\mathcal{L}\) has chief length \(n\).
\end{proof}

The preceding theorem shows that the study of solvable Lie algebras with prescribed chief length reduces to the study of irreducible modules of solvable Lie algebras. In general, no complete classification of such modules is known. However, over an algebraically closed field of characteristic \(0\), the following result holds.

Let \(\mathcal{L}\) be a solvable Lie algebra over an algebraically closed field \(\mathbb{F}\) of characteristic \(0\), and let \(M\) be an \(\mathcal{L}\)-module. Denote by
\(
\mathcal{L}^{*}=\operatorname{Hom}_{\mathbb{F}}(\mathcal{L},\mathbb{F})
\)
the dual space of \(\mathcal{L}\). For each \(\lambda\in \mathcal{L}^{*}\), define
\(
M_{\lambda}=\left\{m\in M \mid l\cdot m=\lambda(l)m \text{ for all } l\in \mathcal{L}\right\}.
\)

\begin{theorem}
Let \(\mathcal{L}\) be a solvable Lie algebra over an algebraically closed field of characteristic \(0\). Then \(\mathcal{L}\) has chief length \(n\) if and only if
\(
\dim(\mathcal{L})=n.
\)
\end{theorem}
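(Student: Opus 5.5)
The plan is to show that every chief factor of \(\mathcal{L}\) is one-dimensional. Since a chief series of length \(n\) has factors whose dimensions sum to \(\dim(\mathcal{L})\), this gives \(l(\mathcal{L})=\dim(\mathcal{L})\), and the equivalence in the statement follows at once. I would argue by induction on \(\dim(\mathcal{L})\), using Theorem~\ref{ind1}. That theorem gives a decomposition \(\mathcal{L}=\mathcal{H}\dotplus A\) in which \(A\) is an abelian minimal ideal and an irreducible \(\mathcal{L}/A\)-module, and \(l(\mathcal{L}/A)=l(\mathcal{L})-1\).

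\textbf{Step 1: a minimal abelian ideal is one-dimensional.} Let \(A\) be the abelian ideal produced in the proof of Theorem~\ref{ind1}, namely the first term \(I_1\) of a chief series passing through an abelian term of the derived series. Then \(A\) is an \(\mathcal{L}\)-module via the adjoint action, and this action factors through \(\mathcal{L}/A\). The quotient \(\mathcal{L}/A\) is solvable, because it is a quotient of a solvable algebra. I would apply Lie's theorem to the solvable algebra \(\mathcal{L}\) acting on the finite-dimensional nonzero module \(A\). Since the field is algebraically closed of characteristic \(0\), this yields a common eigenvector \(0\neq v\in A\) and some \(\lambda\in\mathcal{L}^{*}\) with \(v\in A_{\lambda}\). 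The line \(\mathbb{F}v\) is stable under \(\mathrm{ad}_x\) for every \(x\in\mathcal{L}\), so it is an ideal of \(\mathcal{L}\). By irreducibility of \(A\) (equivalently, minimality of \(A=I_1/I_0\)), we get \(A=\mathbb{F}v\), so \(\dim A=1\).

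\textbf{Step 2: induction.} Now \(\mathcal{L}/A\) is solvable over the same field, has dimension \(\dim(\mathcal{L})-1\), and has chief length \(l(\mathcal{L})-1\) by Theorem~\ref{ind1}. By the induction hypothesis, \(l(\mathcal{L}/A)=\dim(\mathcal{L}/A)\). Hence \(l(\mathcal{L})=1+\dim(\mathcal{L})-1=\dim(\mathcal{L})\). The base case is \(\mathcal{L}=\{0\}\), where both quantities are \(0\), as recalled at the start of Section~\ref{secsolvretr}.

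\textbf{Alternative and main obstacle.} Another route is to apply Lie's theorem directly to the adjoint representation. This produces a flag of ideals \(\{0\}=F_0\subsetneq F_1\subsetneq\cdots\subsetneq F_d=\mathcal{L}\) with \(\dim F_i=i\). Each factor is one-dimensional and therefore automatically a minimal ideal of \(\mathcal{L}/F_i\), so the flag is a chief series of length \(d=\dim\mathcal{L}\), and Corollary~3.3 gives uniqueness of the length. The only real obstacle is the correct invocation of Lie's theorem. One must check that the module in question (\(A\), or \(\mathcal{L}\) itself under \(\mathrm{ad}\)) is a module for a solvable Lie algebra over an algebraically closed field of characteristic \(0\). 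Both hypotheses are essential: over \(\mathbb{R}\), or in characteristic \(p\), Theorem~\ref{vao2} already gives chief length \(2\) examples of dimension greater than \(2\).
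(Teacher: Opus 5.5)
Your proof is correct and is essentially the paper's argument: Lie's theorem forces the minimal abelian ideal \(A\) of Theorem~\ref{ind1} (indeed every chief factor) to be one-dimensional, and induction on \(\mathcal{L}/A\) finishes. Your induction on \(\dim\mathcal{L}\) simply packages the paper's two directions (induction on \(n\), then a separate count of one-dimensional chief factors) into the single identity \(l(\mathcal{L})=\dim\mathcal{L}\).

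One correction to your closing aside: over an algebraically closed field of characteristic \(p\), the single operator \(\mathrm{ad}_a\) always has an eigenvector, so Theorem~\ref{vao2} yields only two-dimensional examples there. A genuine characteristic-\(p\) counterexample therefore needs chief length at least \(3\). For instance, take the algebra \([x,y]=y\) acting on its \(p\)-dimensional irreducible module given by \(x v_i=i v_i\) and \(y v_i=v_{i+1}\) (indices mod \(p\)); the resulting semidirect product has \(l=3\) but dimension \(p+2\).
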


\begin{proof}
We proceed by induction on \(n\).

For \(n=0\), the unique Lie algebra of chief length \(0\) is the trivial Lie algebra, whose dimension is \(0\). For \(n=1\), the solvable Lie algebras of chief length \(1\) are precisely the one-dimensional Lie algebras.

Assume that, for some \(n\in \mathbb{N}\), every solvable Lie algebra of chief length \(n\) has dimension \(n\), and let \(\mathcal{L}\) be a solvable Lie algebra of chief length \(n+1\). By Theorem \ref{ind1}, there exist an abelian ideal \(A\) of \(\mathcal{L}\) and a subalgebra \(\mathcal{H}\) such that
\(
\mathcal{L}=\mathcal{H}\ltimes A,
\)
where \(A\) is an irreducible \(\mathcal{L}/A\)-module and
\(
\mathcal{L}/A
\)
has chief length \(n\). By Lie theorem,
\(
\dim(A)=1.
\)
Moreover, the induction hypothesis yields
\(
\dim(\mathcal{L}/A)=n.
\)
Therefore,
\(
\dim(\mathcal{L})
=\dim(\mathcal{L}/A)+\dim(A)
=n+1.
\)

Conversely, assume that
\(
\dim(\mathcal{L})=n.
\)
Let
\(
\{0\}=I_0\subsetneq I_1\subsetneq \cdots \subsetneq I_r=\mathcal{L}
\)
be a chief series of \(\mathcal{L}\). For each \(1\leq i\leq r\), the factor
\(
I_i/I_{i-1}
\)
is a minimal ideal and therefore an irreducible \(\mathcal{L}/I_{i-1}\)-module. Since \(\mathcal{L}\) is solvable, each quotient
\(
\mathcal{L}/I_{i-1}
\)
is also solvable. Hence, by the previous corollary,
\(
\dim(I_i/I_{i-1})=1
\)
for every \(1\leq i\leq r\). Therefore,
\(
\dim(\mathcal{L})
=
\sum_{i=1}^{r}\dim(I_i/I_{i-1})
=
\sum_{i=1}^{r}1
=
r.
\)
Since \(\dim(\mathcal{L})=n\), we obtain \(r=n\). Thus, \(\mathcal{L}\) has chief length \(n\).
\end{proof}

\section{Neither semisimple nor solvable Lie algebras of characteristic zero}

Recall from Theorem \ref{MAX} that a Lie algebra has chief length two if and only if all of its ideals are maximal.

\begin{lemma}
Let \(\mathcal{L}\) be a Lie algebra of chief length two which is neither semisimple nor solvable. Then
\(
\mathcal{L}= S \ltimes \mathcal{R},
\)
where \(S\) is a simple subalgebra of \(\mathcal{L}\) and \(\mathcal{R}\) is the radical of \(\mathcal{L}\).
\label{dec}
\end{lemma}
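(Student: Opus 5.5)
We will use the Levi decomposition, which holds because the section assumes characteristic zero. It gives \(\mathcal{L}=S\ltimes\mathcal{R}\), where \(\mathcal{R}\) is the radical of \(\mathcal{L}\) and \(S\) is a semisimple subalgebra (a Levi factor). Since \(\mathcal{L}\) is not solvable, \(\mathcal{R}\neq\mathcal{L}\), so \(S\neq\{0\}\). Since \(\mathcal{L}\) is not semisimple, \(\mathcal{R}\neq\{0\}\). So \(\mathcal{R}\) is a proper non-trivial ideal of \(\mathcal{L}\), and the only thing left to show is that \(S\) is simple rather than merely semisimple.

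For this we use Theorem~\ref{MAX}: every proper ideal of \(\mathcal{L}\) is maximal. In particular \(\mathcal{R}\) is maximal, so \(\mathcal{L}/\mathcal{R}\) has no ideals other than \(\{0\}\) and itself. The projection restricted to \(S\) is an isomorphism \(S\cong\mathcal{L}/\mathcal{R}\), because \(S\cap\mathcal{R}=\{0\}\) and \(S+\mathcal{R}=\mathcal{L}\). Hence \(S\) has no proper non-trivial ideals.

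It remains to rule out \(S\) being one-dimensional. \(S\) is semisimple and nonzero, and a one-dimensional algebra is abelian and hence solvable, so it cannot be semisimple. Therefore \(S\) is simple, which gives the decomposition claimed in Lemma~\ref{dec}.

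An alternative to the isomorphism argument is to suppose \(S=S_1\oplus S_2\) with both \(S_i\) nonzero. Then \(S_1\ltimes\mathcal{R}=S_1+\mathcal{R}\) is an ideal of \(\mathcal{L}\), since \([S,\mathcal{R}]\subseteq\mathcal{R}\) and \([S,S_1]\subseteq S_1\). This gives the chain \(\mathcal{R}\subsetneq S_1+\mathcal{R}\subsetneq\mathcal{L}\), which contradicts Theorem~\ref{MAX}.

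The main point requiring care is that the Levi factor \(S\) is a subalgebra rather than an ideal, so simplicity has to be transferred through the isomorphism \(S\cong\mathcal{L}/\mathcal{R}\). The rest is routine.
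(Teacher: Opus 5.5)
Your proof is correct, and your \emph{alternative} paragraph is exactly the paper's argument: decompose the Levi factor as $S_1\oplus\cdots\oplus S_n$ and, if $n>1$, exhibit the chain of ideals $\mathcal{R}\subsetneq S_1\ltimes\mathcal{R}\subsetneq\mathcal{L}$, which contradicts Theorem~\ref{MAX}. Your main route, via maximality of $\mathcal{R}$ and $S\cong\mathcal{L}/\mathcal{R}$, is only a mild repackaging of the same idea (the paper uses that isomorphism itself in Lemma~\ref{Jradnasimp}), and you also make explicit that $S\neq\{0\}$ follows from non-solvability, which the paper leaves implicit.
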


\begin{proof}
By the Levi decomposition theorem,
\(
\mathcal{L}= \mathcal{L}_1 \ltimes \mathcal{R},
\)
where \(\mathcal{L}_1\) is a semisimple subalgebra of \(\mathcal{L}\). Since \(\mathcal{L}_1\) is semisimple, it decomposes as
\(
\mathcal{L}_1= S_1 \oplus S_2 \oplus \cdots \oplus S_n,
\)
where the \(S_i\)'s are simple ideals of \(\mathcal{L}_1\).

Assume that \(n>1\). Then
\(
S_1 \ltimes \mathcal{R}
\)
is a proper ideal of \(\mathcal{L}\). Since \(\mathcal{L}\) is not semisimple, its radical \(\mathcal{R}\) is a non-zero proper ideal of \(\mathcal{L}\). Consequently,
\(
\mathcal{R} \subsetneq S_1 \ltimes \mathcal{R} \subsetneq \mathcal{L},
\)
which yields a chain of distinct proper ideals of \(\mathcal{L}\). This contradicts the assumption that \(\mathcal{L}\) has chief length two. Therefore, \(n=1\), and hence \(\mathcal{L}_1\) is simple. The result follows.
\end{proof}

\begin{lemma}
Let \(\mathcal{R}\) be the radical of a Lie algebra \(\mathcal{L}\) of chief length two which is neither semisimple nor solvable. Then \(\mathcal{R}\) is abelian.
\label{abelrad}
\end{lemma}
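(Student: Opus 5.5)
The plan is to use the characterisation of Theorem~\ref{MAX} together with Lemma~\ref{dec}. By Lemma~\ref{dec} we may write \(\mathcal{L}=S\ltimes\mathcal{R}\) with \(S\) simple and \(\mathcal{R}\neq\{0\}\) (because \(\mathcal{L}\) is not semisimple) and \(\mathcal{R}\neq\mathcal{L}\) (because \(\mathcal{L}\) is not solvable). Thus \(\mathcal{R}\) is a proper non-trivial ideal of \(\mathcal{L}\). By Theorem~\ref{MAX}, together with the argument in its proof, \(\mathcal{R}\) must then be a minimal ideal of \(\mathcal{L}\): any non-zero ideal \(J\subsetneq\mathcal{R}\) would give a chain \(\{0\}\subsetneq J\subsetneq\mathcal{R}\subsetneq\mathcal{L}\) of ideals, which is impossible when every proper ideal is maximal.

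Next I would consider the derived algebra \(\mathcal{R}^{(1)}=[\mathcal{R},\mathcal{R}]\). It is a characteristic ideal of \(\mathcal{R}\), and hence an ideal of \(\mathcal{L}\). Directly, the Jacobi identity gives \([x,[r,s]]=[[x,r],s]+[r,[x,s]]\in[\mathcal{R},\mathcal{R}]\) for \(x\in\mathcal{L}\), using that \(\mathcal{R}\) is an ideal. Since \(\mathcal{R}\) is solvable and non-zero, \(\mathcal{R}^{(1)}\subsetneq\mathcal{R}\); otherwise the derived series of \(\mathcal{R}\) would stabilise at \(\mathcal{R}\neq\{0\}\). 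Minimality of \(\mathcal{R}\) then forces \(\mathcal{R}^{(1)}=\{0\}\), which says precisely that \(\mathcal{R}\) is abelian.

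The only step needing care is the claim that \([\mathcal{R},\mathcal{R}]\) is an ideal of the whole algebra \(\mathcal{L}\) and not merely of \(\mathcal{R}\). The Jacobi computation above settles this without invoking derivations. I would also note explicitly that the minimality of \(\mathcal{R}\) uses the full strength of Theorem~\ref{MAX}, rather than just the length-two chain through \(\mathcal{R}\). Alternatively, one can argue via chief length: a chief series refining \(\{0\}\subsetneq\mathcal{R}^{(1)}\subsetneq\mathcal{R}\subsetneq\mathcal{L}\) with \(\mathcal{R}^{(1)}\neq\{0\}\) would have length at least \(3\), contradicting \(l(\mathcal{L})=2\) by the Corollary on invariance of chief length.
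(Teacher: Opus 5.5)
Your proof is correct and is essentially the paper's argument: the paper also notes that \(\mathcal{R}^{(1)}\) is a non-zero ideal of \(\mathcal{L}\) strictly contained in the proper ideal \(\mathcal{R}\), which contradicts Theorem~\ref{MAX}, and your phrasing via minimality of \(\mathcal{R}\) is the same reasoning in contrapositive form. Your appeal to Lemma~\ref{dec} is unnecessary, since \(\{0\}\neq\mathcal{R}\neq\mathcal{L}\) follows directly from \(\mathcal{L}\) being neither semisimple nor solvable.
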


\begin{proof}
Since \(\mathcal{L}\) is neither semisimple nor solvable, \(\mathcal{R}\) is a non-zero proper ideal of \(\mathcal{L}\). Suppose that \(\mathcal{R}\) is non-abelian. Then its derived algebra \(\mathcal{R}^{(1)}\) is non-zero. Since \(\mathcal{R}\) is solvable,
\(
\mathcal{R}^{(1)} \subsetneq \mathcal{R}.
\)
Moreover, \(\mathcal{R}^{(1)}\) is an ideal of \(\mathcal{L}\). Hence,
\(
\mathcal{R}^{(1)} \subsetneq \mathcal{R} \subsetneq \mathcal{L},
\)
which shows that \(\mathcal{R}^{(1)}\) is a proper ideal of \(\mathcal{L}\) that is not maximal. This contradicts the assumption that \(\mathcal{L}\) has chief length two.
\end{proof}

\begin{lemma}
Let
\(
\mathcal{L}=S \ltimes \mathcal{R},
\)
where \(S\) is a simple Lie subalgebra and \(\mathcal{R}\) is the radical of \(\mathcal{L}\). Let \(I\) be a proper ideal of \(\mathcal{L}\). Then either
\(
I \subseteq \mathcal{R}
\)
or
\(
\mathcal{L}= I + \mathcal{R}.
\)
\label{Jradnasimp}
\end{lemma}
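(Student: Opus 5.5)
Consider the image of \(I\) in the quotient \(\mathcal{L}/\mathcal{R}\). The projection \(\pi:\mathcal{L}\to\mathcal{L}/\mathcal{R}\) restricts to an isomorphism \(S\cong \mathcal{L}/\mathcal{R}\), since \(\mathcal{L}=S\ltimes\mathcal{R}\) gives \(S\cap\mathcal{R}=\{0\}\) and \(S+\mathcal{R}=\mathcal{L}\). Thus \(\mathcal{L}/\mathcal{R}\) is a simple Lie algebra. Since \(I\) is an ideal of \(\mathcal{L}\) and \(\pi\) is surjective, \(\pi(I)=(I+\mathcal{R})/\mathcal{R}\) is an ideal of \(\mathcal{L}/\mathcal{R}\). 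To check this, take \(x+\mathcal{R}\) and \(i+\mathcal{R}\) with \(i\in I\); then \([x+\mathcal{R},i+\mathcal{R}]=[x,i]+\mathcal{R}\), which lies in \(\pi(I)\).

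By simplicity of \(\mathcal{L}/\mathcal{R}\), either \(\pi(I)=\{0\}\) or \(\pi(I)=\mathcal{L}/\mathcal{R}\). In the first case \(I\subseteq\ker\pi=\mathcal{R}\). In the second case \(I+\mathcal{R}=\pi^{-1}(\pi(I))=\mathcal{L}\). This is exactly the required dichotomy. Equivalently, one can argue via the correspondence theorem: \(I+\mathcal{R}\) is an ideal of \(\mathcal{L}\) containing \(\mathcal{R}\), so it corresponds to an ideal of \(\mathcal{L}/\mathcal{R}\cong S\), which must be \(\{0\}\) or everything.

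The only point needing any care is identifying \(\mathcal{L}/\mathcal{R}\) with \(S\) as Lie algebras. This holds because \(\pi|_S\) is an injective homomorphism of Lie algebras, \(\ker\pi\cap S=\mathcal{R}\cap S=\{0\}\), and it is surjective since \(S+\mathcal{R}=\mathcal{L}\). Simplicity of \(S\) then transfers to the quotient. I expect no real obstacle. The properness of \(I\) is not needed for the dichotomy itself, although it matters in later applications. Note also that in the second case \(I\) need not equal \(\mathcal{L}\).
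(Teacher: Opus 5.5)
Your proof is correct and follows the same route as the paper: you pass to \((I+\mathcal{R})/\mathcal{R}\), an ideal of \(\mathcal{L}/\mathcal{R}\cong S\), and use simplicity to conclude \(I+\mathcal{R}=\mathcal{R}\) or \(I+\mathcal{R}=\mathcal{L}\). Two additions of yours go beyond the paper and are accurate: the explicit check that \(\pi|_S\) is an isomorphism onto \(\mathcal{L}/\mathcal{R}\), and the remark that properness of \(I\) is not needed for the dichotomy.
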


\begin{proof}
Let \(I\) be a proper ideal of \(\mathcal{L}\). Then \(I + \mathcal{R}\) is an ideal of \(\mathcal{L}\) containing \(\mathcal{R}\). Consequently,
\(
(I + \mathcal{R})/\mathcal{R}
\)
is an ideal of \(\mathcal{L}/\mathcal{R}\).

Since
\(
\mathcal{L}=S \ltimes \mathcal{R},
\)
we have
\(
\mathcal{L}/\mathcal{R} \cong S.
\)
As \(S\) is simple, \(\mathcal{L}/\mathcal{R}\) is also simple. Therefore,
\(
(I + \mathcal{R})/\mathcal{R}
\)
is either trivial or equal to \(\mathcal{L}/\mathcal{R}\). Hence,
\(
I + \mathcal{R}= \mathcal{R}
\)
or
\(
I + \mathcal{R}= \mathcal{L}.
\)
Equivalently,
\(
I \subseteq \mathcal{R}
\)
or
\(
\mathcal{L}= I + \mathcal{R}.
\)
\end{proof}

\begin{lemma}
Let
\(
\mathcal{L}=S \ltimes \mathcal{R},
\)
where \(S\) is a simple Lie subalgebra and \(\mathcal{R}\) is the abelian radical of \(\mathcal{L}\). Suppose that the action of \(S\) on \(\mathcal{R}\) is irreducible. Then either \(\mathcal{R}\) is one-dimensional or \(\mathcal{R}\) is the unique proper ideal of \(\mathcal{L}\).
\label{farny}
\end{lemma}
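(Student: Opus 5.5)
The proof splits on whether the action of \(S\) on \(\mathcal{R}\) is trivial.

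\textbf{Setting up the case split.} Since \(\mathcal{R}\) is abelian, the action of \(S\) on \(\mathcal{R}\) is given by \(s\cdot r=[s,r]\), and every \(S\)-submodule of \(\mathcal{R}\) is automatically an ideal of \(\mathcal{L}\). First suppose the action is trivial, so \([S,\mathcal{R}]=\{0\}\). Then every subspace of \(\mathcal{R}\) is a submodule, and irreducibility forces \(\dim \mathcal{R}=1\). This gives the first alternative. (Here we use that \(\mathcal{R}\neq\{0\}\), which is implicit in calling it an irreducible module.)

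\textbf{The non-trivial action case.} Now assume the action is non-trivial. Then \([S,\mathcal{R}]\) is a nonzero submodule of \(\mathcal{R}\), so by irreducibility \([S,\mathcal{R}]=\mathcal{R}\). Let \(I\) be a nonzero proper ideal of \(\mathcal{L}\). By Lemma~\ref{Jradnasimp}, either \(I\subseteq\mathcal{R}\) or \(I+\mathcal{R}=\mathcal{L}\).
\begin{itemize}
\item If \(I\subseteq\mathcal{R}\), then \(I\) is an \(S\)-submodule, so \(I=\mathcal{R}\) by irreducibility.
\item If \(I+\mathcal{R}=\mathcal{L}\), then \(I\cap\mathcal{R}\) is a submodule, so it equals \(\{0\}\) or \(\mathcal{R}\). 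If it equals \(\mathcal{R}\), then \(\mathcal{R}\subseteq I\) and \(I=I+\mathcal{R}=\mathcal{L}\), contradicting properness.
\end{itemize}

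\textbf{The main obstacle.} What remains is the sub-case \(I\cap\mathcal{R}=\{0\}\) with \(I+\mathcal{R}=\mathcal{L}\). By Remark~\ref{nonoverlapid}, \([I,\mathcal{R}]=\{0\}\) and \(\mathcal{L}=I\oplus\mathcal{R}\). Since \(\mathcal{R}\) is abelian, this gives \([\mathcal{L},\mathcal{R}]=[I,\mathcal{R}]+[\mathcal{R},\mathcal{R}]=\{0\}\). But \([\mathcal{L},\mathcal{R}]\supseteq[S,\mathcal{R}]=\mathcal{R}\neq\{0\}\), a contradiction. So this case cannot occur, and \(\mathcal{R}\) is the unique nonzero proper ideal.

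This sub-case is the crux. It must rule out a complementary ideal \(I\cong S\) in \(\mathcal{L}\), and the key is to use that \(\mathcal{R}\) is abelian together with non-triviality of the action, rather than trying to compare \(I\) with \(S\) directly.
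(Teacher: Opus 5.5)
Your proof is correct, but it handles the crux differently from the paper. The paper splits on \(I\cap\mathcal{R}\) rather than on whether the action is trivial. In the case \(I\cap\mathcal{R}=\{0\}\), \(I+\mathcal{R}=\mathcal{L}\), it invokes the Levi--Malcev theorem. It treats \(I\) as a second Levi complement and takes an automorphism \(f\) with \(f(S)=I\) and \(f(\mathcal{R})=\mathcal{R}\). It then transports \([I,\mathcal{R}]=\{0\}\) back to \([S,\mathcal{R}]=\{0\}\), so the irreducible module \(\mathcal{R}\) is trivial and hence one-dimensional.

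You instead note that \(\mathcal{L}=I+\mathcal{R}\) together with \([I,\mathcal{R}]=[\mathcal{R},\mathcal{R}]=\{0\}\) makes \(\mathcal{R}\) central. This gives \([S,\mathcal{R}]\subseteq[\mathcal{L},\mathcal{R}]=\{0\}\) in one line. The logical content is the same as the paper's: the paper concludes the action is trivial, and you turn that into a contradiction with non-triviality.

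Your route is more elementary. It needs no conjugacy theorem, and no check that \(I\cong\mathcal{L}/\mathcal{R}\) is semisimple so that Malcev applies, which the paper leaves unstated. Lemma~\ref{Jradnasimp} and Remark~\ref{nonoverlapid} are characteristic-free, so your argument proves the lemma over any field. The paper's appeal to Levi--Malcev ties it to characteristic zero. Splitting on triviality of the action also gives a slightly sharper statement: a non-trivial irreducible action forces \(\mathcal{R}\) to be the unique nonzero proper ideal.
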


\begin{proof}
Let \(I\) be a proper ideal of \(\mathcal{L}\). Then \(I \cap \mathcal{R}\) is an ideal contained in \(\mathcal{R}\). Since both \(I \cap \mathcal{R}\) and \(\mathcal{R}\) are ideals of \(\mathcal{L}\), they are naturally \(S\)-modules. Hence, \(I \cap \mathcal{R}\) is an \(S\)-submodule of \(\mathcal{R}\).

By hypothesis, \(\mathcal{R}\) is an irreducible \(S\)-module. Therefore,
\(
I \cap \mathcal{R}= \{0\}
\)
or
\(
I \cap \mathcal{R}= \mathcal{R}.
\)

We consider these two cases separately.

\medskip

\noindent
\textbf{Case 1.} Suppose that
\(
I \cap \mathcal{R}= \{0\}.
\)

Since \(I\) is non-zero and proper, Lemma \ref{Jradnasimp} implies that
\(
I + \mathcal{R}= \mathcal{L}.
\)
Therefore,
\(
\mathcal{L}= I \ltimes \mathcal{R}.
\)

On the other hand,
\(
\mathcal{L}=S \ltimes \mathcal{R}.
\)
Hence, by the Levi–Malcev theorem, there exists an automorphism
\(
f:\mathcal{L}\to \mathcal{L}
\)
such that
\(
f(S)=I
\quad \text{and} \quad
f(\mathcal{R})=\mathcal{R}.
\)

Since both \(I\) and \(\mathcal{R}\) are ideals of \(\mathcal{L}\), Remark \ref{nonoverlapid} yields
\(
[I,\mathcal{R}]=\{0\}.
\)
Consequently,
\begin{align*}
f([S,\mathcal{R}])
&=[f(S),f(\mathcal{R})] \\
&=[I,\mathcal{R}] \\
&=\{0\}.
\end{align*}

As \(f\) is an automorphism, it follows that
\(
[S,\mathcal{R}]=\{0\}.
\)
Thus, \(\mathcal{R}\) is a trivial \(S\)-module. Since \(\mathcal{R}\) is also irreducible, it follows that \(\mathcal{R}\) is one-dimensional.

\medskip

\noindent
\textbf{Case 2.} Suppose that
\(
I \cap \mathcal{R}= \mathcal{R}.
\)

Then
\(
\mathcal{R}\subseteq I.
\)
By Lemma \ref{Jradnasimp}, either
\(
I \subseteq \mathcal{R}
\)
or
\(
I+\mathcal{R}= \mathcal{L}.
\)

Since \(\mathcal{R}\subseteq I\), the latter equality implies that
\(
I=\mathcal{L},
\)
contradicting the assumption that \(I\) is proper. Therefore,
\(
I \subseteq \mathcal{R}.
\)
Hence,
\(
I=\mathcal{R}.
\)
\end{proof}

\begin{theorem}
Let \(\mathcal{L}\) be a Lie algebra which is neither semisimple nor solvable. Then \(\mathcal{L}\) has chief length two if and only if
\(
\mathcal{L}=S \ltimes \mathcal{R},
\)
where \(S\) is a simple subalgebra, \(\mathcal{R}\) is the abelian radical of \(\mathcal{L}\), and the action of \(S\) on \(\mathcal{R}\) is irreducible.
\label{mixbak}
\end{theorem}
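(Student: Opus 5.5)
The proof splits into the two implications, and the forward direction is mostly already done by the lemmas.

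\((\Rightarrow)\) Assume \(l(\mathcal{L})=2\). Lemma~\ref{dec} gives \(\mathcal{L}=S\ltimes\mathcal{R}\) with \(S\) simple and \(\mathcal{R}\) the radical, and Lemma~\ref{abelrad} shows \(\mathcal{R}\) is abelian. Since \(\mathcal{L}\) is neither semisimple nor solvable, \(\mathcal{R}\) is a nonzero proper ideal. For irreducibility, let \(V\subseteq\mathcal{R}\) be an \(S\)-submodule. Because \(\mathcal{R}\) is abelian, we have
\[
[\mathcal{L},V]=[S,V]+[\mathcal{R},V]=[S,V]\subseteq V,
\]
so \(V\) is an ideal of \(\mathcal{L}\). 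If \(0\neq V\subsetneq\mathcal{R}\), then \(V\subsetneq\mathcal{R}\subsetneq\mathcal{L}\) shows that \(V\) is a proper ideal that is not maximal. This contradicts Theorem~\ref{MAX}, so \(\mathcal{R}\) is an irreducible \(S\)-module.

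\((\Leftarrow)\) Assume \(\mathcal{L}=S\ltimes\mathcal{R}\) as in the statement. By Theorem~\ref{MAX} it suffices to show that every proper nonzero ideal is maximal. Lemma~\ref{farny} gives two cases. If \(\mathcal{R}\) is the unique proper nonzero ideal, then it is maximal, because \(\mathcal{L}/\mathcal{R}\cong S\) is simple, and we are done. The remaining case is \(\dim\mathcal{R}=1\) with some ideal \(I\neq\mathcal{R}\) and \(I\cap\mathcal{R}=\{0\}\).

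This second case is the main obstacle. In it, \(S\) acts trivially on \(\mathcal{R}\), since \(\mathcal{R}\) is a one-dimensional module of a simple algebra, where \([S,S]=S\) acts by traces. Hence \(\mathcal{L}=S\oplus\mathcal{R}\), where \(\mathcal{R}\) is one-dimensional and central. Its proper nonzero ideals are \(\mathcal{R}\) and \(S\); any other ideal \(I\) has \(I\cap\mathcal{R}=0\), and by Lemma~\ref{Jradnasimp} it projects isomorphically onto \(S\), so \(I=[I,I]\subseteq[\mathcal{L},\mathcal{L}]=S\) and therefore \(I=S\). Both \(S\) and \(\mathcal{R}\) are maximal, since \(\mathcal{L}/S\) and \(\mathcal{L}/\mathcal{R}\) are respectively one-dimensional and simple. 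So \(l(\mathcal{L})=2\) here as well; equivalently, \(\{0\}\subset\mathcal{R}\subset\mathcal{L}\) is a chief series.

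The point to handle carefully is that the one-dimensional trivial-module case is not excluded by the hypotheses. It still yields chief length two, so the converse holds as stated. The write-up should treat this case explicitly rather than relying on ``\(\mathcal{R}\) is the unique proper ideal'', which fails when \(\mathcal{R}\) is central.
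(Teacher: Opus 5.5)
Your proof is correct and follows the paper's argument. The forward direction uses Lemmas~\ref{dec} and \ref{abelrad} together with the observation that an $S$-submodule of the abelian radical is an ideal of $\mathcal{L}$, and the converse uses Lemma~\ref{farny} plus a separate treatment of $\dim\mathcal{R}=1$. The only variation is in that last case: the paper intersects $I$ with $S$ and counts dimensions, whereas you note that $S$ acts trivially, so $\mathcal{L}=S\oplus\mathcal{R}$, and use $I=[I,I]\subseteq[\mathcal{L},\mathcal{L}]=S$; this also makes explicit that $S$ is an ideal and justifies maximality via the quotients, which the paper passes over quickly.
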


\begin{proof}
Suppose first that \(\mathcal{L}\) has chief length two. Since \(\mathcal{L}\) is neither semisimple nor solvable, Lemma \ref{dec} yields
\(
\mathcal{L}=S \ltimes \mathcal{R},
\)
where \(S\) is a simple subalgebra and \(\mathcal{R}\) is the radical of \(\mathcal{L}\). Moreover, Lemma \ref{abelrad} shows that \(\mathcal{R}\) is abelian.

Since \(\mathcal{R}\) is an ideal of \(\mathcal{L}\), it is naturally an \(S\)-module. Suppose that \(\mathcal{R}\) admits a proper non-zero \(S\)-submodule \(\mathcal{R}'\). Then, for all \(x\in S\) and \(y\in \mathcal{R}'\),
\(
[x,y]\in \mathcal{R}'.
\)

Since \(\mathcal{R}\) is abelian, for all \(x'\in \mathcal{R}\) and \(y\in \mathcal{R}'\),
\(
[x',y]=0\in \mathcal{R}'.
\)

As
\(
\mathcal{L}=S \ltimes \mathcal{R},
\)
it follows that, for all \(a\in \mathcal{L}\) and \(y\in \mathcal{R}'\),
\(
[a,y]\in \mathcal{R}'.
\)
Hence, \(\mathcal{R}'\) is an ideal of \(\mathcal{L}\). Since
\(
\mathcal{R}'\subsetneq \mathcal{R}\subsetneq \mathcal{L},
\)
this contradicts the assumption that \(\mathcal{L}\) has chief length two. Therefore, \(\mathcal{R}\) is an irreducible \(S\)-module.

Conversely, let \(S\) be a simple Lie algebra and let \(\mathcal{R}\) be an irreducible \(S\)-module. Endow \(\mathcal{R}\) with the trivial Lie bracket, and consider
\(
\mathcal{L}=S \ltimes \mathcal{R},
\)
where the bracket between an element of \(S\) and an element of \(\mathcal{R}\) is given by the action of \(S\) on \(\mathcal{R}\).

By Lemma \ref{farny}, either \(\mathcal{R}\) is one-dimensional or \(\mathcal{R}\) is the unique proper ideal of \(\mathcal{L}\). In the latter case, \(\mathcal{L}\) clearly has chief length two. It therefore remains to consider the case where \(\mathcal{R}\) is one-dimensional.

Assume that \(\dim(\mathcal{R})=1\), and let \(I\) be a proper ideal of
\(
\mathcal{L}=S \ltimes \mathcal{R}.
\)
Then \(I\cap S\) is an ideal of \(S\). Since \(S\) is simple,
\(
I\cap S=S
\)
or
\(
I\cap S=\{0\}.
\)

We examine these two cases separately.

\medskip

\noindent
\textbf{Case 1.} Suppose that
\(
I\cap S=\{0\}.
\)

Since
\(
\mathcal{L}=S \ltimes \mathcal{R}
\)
and \(\dim(\mathcal{R})=1\),
\(
\dim(\mathcal{L})=\dim(S)+1.
\)
Moreover, \(I\neq \{0\}\) and
\(
I\cap S=\{0\}.
\)
Hence,
\(
I+S=\mathcal{L},
\)
which implies that \(I\) is one-dimensional. Consequently, \(I\) is solvable. Since \(\mathcal{R}\) is the radical of \(\mathcal{L}\),
\(
I\subseteq \mathcal{R}.
\)
As \(\mathcal{R}\) is one-dimensional, it follows that
\(
I=\mathcal{R}.
\)

\medskip

\noindent
\textbf{Case 2.} Suppose that
\(
I\cap S=S.
\)

Then
\(
S\subseteq I.
\)
Since
\(
\dim(\mathcal{L})=\dim(S)+1,
\)
the inclusion
\(
S\subsetneq I
\)
would imply that
\(
I=\mathcal{L},
\)
contradicting the assumption that \(I\) is proper. Therefore,
\(
I=S.
\)

Consequently, the only proper ideals of \(\mathcal{L}\) are \(S\) and \(\mathcal{R}\). Since
\(
S\cap \mathcal{R}=\{0\},
\)
both are maximal ideals of \(\mathcal{L}\). Hence, \(\mathcal{L}\) has chief length two.
\end{proof}

To summarise, a Lie algebra of chief length zero is trivial, and a Lie algebra of chief length one is either simple or one-dimensional. Moreover, a Lie algebra of chief length two which is neither semisimple nor solvable is precisely of the form
\(
\mathcal{L}=S \ltimes \mathcal{R},
\)
where \(S\) is a simple subalgebra, \(\mathcal{R}\) is the abelian radical of \(\mathcal{L}\), and the action of \(S\) on \(\mathcal{R}\) is irreducible. These results lead to the following theorem.

Recall that if \(\mathcal{A}\) is an abelian ideal of a Lie algebra \(\mathcal{L}\), then the quotient algebra \(\mathcal{L}/\mathcal{A}\) acts naturally on \(\mathcal{A}\) via
\(
(l+\mathcal{A})a=[l,a]
\)
for all \(l\in \mathcal{L}\) and \(a\in \mathcal{A}\). As observed previously, this action is well defined.

\begin{theorem}
Let \(\mathcal{L}\) be a Lie algebra which is neither semisimple nor solvable. Then \(\mathcal{L}\) has chief length \(n \in \mathbb{N}\) if and only if
\(
\mathcal{L}= \mathcal{H} \dotplus \mathcal{A},
\)
where \(\mathcal{A}\) is a non-zero abelian ideal, $\mathcal{H}$ is a vector-space complement of $\mathcal{A}$ in $\mathcal{L}$,
\(
l(\mathcal{L}/\mathcal{A})=n-1,
\)
\(\mathcal{L}/\mathcal{A}\) is non-solvable, and \(\mathcal{A}\) is an irreducible \(\mathcal{L}/\mathcal{A}\)-module.
\label{ind2}
\end{theorem}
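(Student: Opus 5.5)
The argument will mirror the proof of Theorem~\ref{ind1}, with one extra ingredient: showing that $\mathcal{L}$ possesses a non-zero abelian ideal and that the quotient $\mathcal{L}/\mathcal{A}$ stays non-solvable.

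\textbf{Forward direction.} Assume $l(\mathcal{L})=n$.

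Since $\mathcal{L}$ is not semisimple, its radical $\mathcal{R}$ is non-zero. Let $m$ be the derived length of $\mathcal{R}$. Then $\mathcal{R}^{(m-1)}$ is a non-zero abelian ideal of $\mathcal{L}$, because each term of the derived series of an ideal is again an ideal of $\mathcal{L}$.

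Refine the chain $\{0\}\subsetneq \mathcal{R}^{(m-1)}\subseteq\mathcal{L}$ to a chief series $\{0\}=I_0\subsetneq I_1\subsetneq\cdots\subsetneq I_n=\mathcal{L}$. This is possible because any chain of ideals refines to a chief series, by finite dimensionality. In the refinement $I_1\subseteq\mathcal{R}^{(m-1)}$, so $I_1$ is abelian. Set $\mathcal{A}=I_1$.

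The correspondence theorem shows that $I_1/I_1\subsetneq\cdots\subsetneq I_n/I_1$ is a chief series of $\mathcal{L}/\mathcal{A}$. Hence $l(\mathcal{L}/\mathcal{A})=n-1$.

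Irreducibility follows exactly as in Theorem~\ref{ind1}. Because $\mathcal{A}$ is abelian, an $\mathcal{L}/\mathcal{A}$-submodule of $\mathcal{A}$ is the same thing as an ideal of $\mathcal{L}$ contained in $\mathcal{A}$. Minimality of $I_1$ therefore gives irreducibility.

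Non-solvability of $\mathcal{L}/\mathcal{A}$ holds because $\mathcal{A}$ is solvable. If $\mathcal{L}/\mathcal{A}$ were solvable, then $\mathcal{L}$ would be solvable as an extension of a solvable algebra by a solvable ideal, contrary to hypothesis.

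Finally, $\mathcal{H}$ is any vector-space complement of $\mathcal{A}$ in $\mathcal{L}$.

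\textbf{Converse.} This is identical to the converse in Theorem~\ref{ind1}.

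Take a chief series of $\mathcal{L}/\mathcal{A}$ of length $n-1$ and pull it back along $\pi:\mathcal{L}\to\mathcal{L}/\mathcal{A}$. Then prepend $\{0\}\subsetneq\mathcal{A}$.

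The factor $\mathcal{A}/\{0\}$ is chief. Indeed, an ideal $M$ of $\mathcal{L}$ with $0\neq M\subseteq\mathcal{A}$ is an $\mathcal{L}/\mathcal{A}$-submodule, so $M=\mathcal{A}$ by irreducibility. The remaining factors are chief by the correspondence theorem. This yields a chief series of length $n$, and the corollary to Theorem~3.1 gives $l(\mathcal{L})=n$.

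The hypotheses that $\mathcal{L}/\mathcal{A}$ is non-solvable and that $\mathcal{L}$ is neither semisimple nor solvable play no role in this direction.

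\textbf{Main obstacle.} The only delicate point is in the forward direction: producing a chief series whose first term lies inside an abelian ideal. The characteristic-zero setting and the non-semisimplicity of $\mathcal{L}$ are used exactly there, to get $\mathcal{R}\neq 0$ and hence a non-zero abelian ideal. The refinement step has to be justified cleanly. I would do it by choosing $I_1$ to be a minimal ideal of $\mathcal{L}$ contained in $\mathcal{R}^{(m-1)}$, which exists by finite dimensionality, and then extending $\{0\}\subsetneq I_1$ by pulling back any chief series of $\mathcal{L}/I_1$. This avoids needing a general refinement theorem.
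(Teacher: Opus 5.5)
Your proposal is correct and follows the paper's route: the paper likewise takes a minimal abelian ideal $\mathcal{A}$ inside the non-zero radical and reruns the argument of Theorem~\ref{ind1}, and you additionally verify that $\mathcal{L}/\mathcal{A}$ is non-solvable, which the paper leaves implicit. One small correction to your closing remark: characteristic zero is not actually used, since $\mathcal{R}\neq 0$ is just non-semisimplicity; this is why the paper notes that the theorem also holds in positive characteristic.
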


\begin{proof}
Since \(\mathcal{L}\) is not semisimple, its radical is non-zero. As the radical is solvable, it contains a non-zero abelian ideal, which is therefore an ideal of \(\mathcal{L}\). Let \(\mathcal{A}\) be a minimal non-zero abelian ideal of \(\mathcal{L}\).

Applying the same argument as in Theorem \ref{ind1}, the result follows.
\end{proof}

Again, this theorem shows that the classification of mixed Lie algebras with respect to chief length reduces to the classification of irreducible modules. However, no general classification of irreducible modules exists, and such a classification is not attainable in full generality. Consequently, this theorem is essentially the strongest general result obtainable for mixed Lie algebras over fields of characteristic \(0\).

It is also worth noting that the previous theorem remains valid in positive characteristic. Hence, the structural constraints governing the study of mixed Lie algebras are identical in both characteristic zero and positive characteristic.

\section{Semisimple Lie algebras of positive characteristic}

Section \ref{secsolvretr} studied solvable Lie algebras over fields of arbitrary characteristic. Moreover, the paragraph preceding this section states that theorem \ref{ind2} also holds in characteristic \(p > 0\). Consequently, theorem \ref{ind2} together with the subsequent discussion provide the constraints for the classification of Lie algebras that are neither semisimple nor solvable with respect to their chief length in positive characteristic. Therefore, it remains to study semisimple Lie algebras of characteristic \(p > 0\) from the perspective of their chief length.\\

Recall that, by theorem \ref{MAX}, a Lie algebra has chief length \(2\) if and only if all its ideals are maximal.\\

Let \(\mathbb{F}\) be a field of characteristic \(p>0\). Throughout this section, \(S\) and \(S_i\) denote simple Lie algebras over \(\mathbb{F}\), and \(B_n\) denotes the quotient algebra
\(
B_n=\mathbb{F}[X_1,\ldots,X_n]/(X_1^p,X_2^p,\ldots,X_n^p),
\)
where \(n\in\mathbb{N}\), \(\mathbb{F}[X_1,\ldots,X_n]\) is the polynomial algebra in the indeterminates \(X_i\), and \((X_1^p,X_2^p,\ldots,X_n^p)\) is the ideal generated by the elements \(X_i^p\). When \(n=0\), we set \(B_0=\mathbb{F}\).

\begin{theorem}[\cite{block}, Theorem 7.1]
Let \(\mathcal{S}\) be a simple Lie algebra of characteristic \(p > 0\). Then
\(
\mathrm{Der}(\mathcal{S}\otimes B_n)
=
\mathrm{Der}(\mathcal{S})\otimes B_n
+
\Gamma\otimes \mathrm{Der}(B_n),
\)
where \(\Gamma\) denotes the centroid of \(\mathcal{S}\).
\label{decSBn}
\end{theorem}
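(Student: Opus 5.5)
We plan to prove the two inclusions separately, writing \(\mathcal{L}=\mathcal{S}\otimes B_n\) with bracket \([s\otimes f,t\otimes g]=[s,t]\otimes fg\).

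\emph{The inclusion \(\supseteq\).} For \(D\in\mathrm{Der}(\mathcal{S})\) and \(f\in B_n\), define \((D\otimes f)(s\otimes g)=D(s)\otimes fg\). For \(\gamma\in\Gamma\) and \(d\in\mathrm{Der}(B_n)\), define \((\gamma\otimes d)(s\otimes g)=\gamma(s)\otimes d(g)\). Checking the Leibniz rule is routine. The first map works because \(B_n\) is commutative and associative. The second uses \(\gamma[s,t]=[\gamma s,t]=[s,\gamma t]\) together with \(d(gh)=d(g)h+g\,d(h)\).

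\emph{The inclusion \(\subseteq\).} Fix a basis \(\{f_\alpha\}\) of \(B_n\) given by the monomials \(X^\alpha\) with \(0\le\alpha_i<p\). Given \(D\in\mathrm{Der}(\mathcal{L})\), expand
\[
D(s\otimes 1)=\sum_\alpha D_\alpha(s)\otimes X^\alpha .
\]
Applying \(D\) to \([s\otimes1,t\otimes1]\) and comparing coefficients shows that each \(D_\alpha\) lies in \(\mathrm{Der}(\mathcal{S})\). Subtracting \(D_0:=\sum_\alpha D_\alpha\otimes X^\alpha\), we may therefore assume that \(D\) vanishes on \(\mathcal{S}\otimes1\).

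Next we examine \(D(s\otimes g)\). The identity \([t\otimes1,s\otimes g]=[t,s]\otimes g\) gives
\[
D([t,s]\otimes g)=[t\otimes 1,D(s\otimes g)].
\]
Fix \(g\) and write \(D(s\otimes g)=\sum_\alpha\phi_{g,\alpha}(s)\otimes X^\alpha\). Each \(\phi_{g,\alpha}\) then commutes with \(\mathrm{ad}\,\mathcal{S}\) on \(\mathcal{S}=[\mathcal{S},\mathcal{S}]\), since \(\mathcal{S}\) is simple. Hence \(\phi_{g,\alpha}\in\Gamma\), and we obtain a linear map \(\Phi:B_n\to\Gamma\otimes B_n\).

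Applying \(D\) to \([s\otimes g,t\otimes h]\) yields
\[
\Phi(gh)=\Phi(g)h+g\Phi(h).
\]
So \(\Phi\) is a derivation of \(B_n\) with values in \(\Gamma\otimes B_n\). Because \(\Gamma\) is finite-dimensional, choose a basis \(\gamma_j\) and write \(\Phi=\sum_j\gamma_j\otimes d_j\). Each \(d_j\) is then in \(\mathrm{Der}(B_n)\). This step needs \(\Gamma\) to be a commutative field extension acting compatibly. Centroids of simple algebras are fields, and comparing \(\gamma_j\)-coefficients is legitimate because \(\mathcal{S}\) is a faithful \(\Gamma\)-module.

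\emph{Main obstacle.} The subtle step is the passage from \(\phi_{g,\alpha}\) to elements of \(\Gamma\), which gives the \(\Gamma\)-valued derivation. We must show that the \(\gamma_j\otimes X^\alpha\) coefficients can be extracted consistently, so that the decomposition is well defined. We plan to handle this by viewing \(\mathcal{S}\) as a \(\Gamma\)-vector space and working over \(\Gamma\) throughout.
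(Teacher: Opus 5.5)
Your argument is correct and amounts to a complete proof. The paper gives no proof of this statement: it quotes the result from Block and uses it as a black box. So the comparison is between a citation and your direct computation. The citation buys brevity. Your route is elementary and self-contained, and it shows that very little is used: only that \(\mathcal{S}\) is perfect, that \(\Gamma\subseteq\operatorname{End}_{\mathbb{F}}(\mathcal{S})\), and that \(B_n\) is commutative and associative with \(1\). Neither the characteristic \(p\) nor the relations \(X_i^p=0\) enter, so you in fact prove the identity for \(\mathcal{S}\otimes B\) with any such \(B\).

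Two points should be tightened.

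\textbf{The ``main obstacle'' is not one.} You need neither that \(\Gamma\) is a field nor to work over \(\Gamma\). Since \(\Gamma\subseteq\operatorname{End}_{\mathbb{F}}(\mathcal{S})\), the map \(\Gamma\otimes B_n\to\operatorname{Hom}_{\mathbb{F}}(\mathcal{S},\mathcal{S}\otimes B_n)\), \(\gamma\otimes f\mapsto\bigl(s\mapsto\gamma(s)\otimes f\bigr)\), is injective; to see this, compare \(X^\alpha\)-components. An \(\mathbb{F}\)-basis \(\gamma_1,\dots,\gamma_m\) of \(\Gamma\) gives \(\Gamma\otimes B_n=\bigoplus_j\gamma_j\otimes B_n\) as \(B_n\)-modules. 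Hence \(\Phi(gh)=\Phi(g)h+g\Phi(h)\) splits componentwise into \(d_j(gh)=d_j(g)h+g\,d_j(h)\). The identity \(\bigl(\sum_j\gamma_j\otimes d_j\bigr)(s\otimes g)=\sum_j\gamma_j(s)\otimes d_j(g)=D(s\otimes g)\) then finishes the proof.

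\textbf{Perfectness is invoked in the wrong place.} In the centroid step it is not needed: \(\phi_{g,\alpha}([t,s])=[t,\phi_{g,\alpha}(s)]\) holds for all \(s,t\), and antisymmetry gives \(\phi_{g,\alpha}([s,t])=[\phi_{g,\alpha}(s),t]\). Perfectness is genuinely needed in the next step. Applying \(D\) to \([s\otimes g,t\otimes h]\) and using the centroid property only shows that \(\Phi(gh)-\Phi(g)h-g\Phi(h)\) kills every \([s,t]\otimes 1\). To conclude that it vanishes in \(\Gamma\otimes B_n\), you need \([\mathcal{S},\mathcal{S}]=\mathcal{S}\) (a nonzero ideal of the simple \(\mathcal{S}\)) together with the injectivity above. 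Spell this out explicitly.
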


\begin{definition}
Let \(\mathcal{L}\) and \(\mathcal{L}'\) be Lie algebras such that \(\mathcal{L}\subseteq \mathrm{Der}(\mathcal{L}')\). An ideal \(I\) of \(\mathcal{L}'\) is called an \(\mathcal{L}\)-ideal of \(\mathcal{L}'\) if it is invariant under the action of \(\mathcal{L}\), that is,
\(
D(I)\subseteq I
\qquad
\text{for every } D\in\mathcal{L}.
\)
The Lie algebra \(\mathcal{L}'\) is said to be \(\mathcal{L}\)-simple if its only \(\mathcal{L}\)-ideals are
\(
0
\qquad\text{and}\qquad
\mathcal{L}'.
\)
\end{definition}

\begin{theorem}[Block, {\cite[Theorem 9.3]{block}}]
Let \(\mathcal{L}\) be a semisimple Lie algebra over a field of characteristic \(p>0\). Then there exist \(n\in\mathbb{N}\), simple Lie algebras \(\mathcal{S}_i\), and non-negative integers \(n_i\) such that
\(
\bigoplus_{i=1}^{n}\mathrm{inDer}(\mathcal{S}_i\otimes B_{n_i})
\subseteq
\mathcal{L}
\subseteq
\bigoplus_{i=1}^{n}\mathrm{Der}(\mathcal{S}_i\otimes B_{n_i}).
\)
Conversely, let \(\mathcal{L}\) be a Lie algebra satisfying the above inclusions. For each \(i\), let \(\mathcal{L}_i\) denote the projection of \(\mathcal{L}\) onto \(\mathrm{Der}(\mathcal{S}_i\otimes B_{n_i})\). Then \(\mathcal{L}\) is semisimple if and only if \(\mathcal{S}_i\otimes B_{n_i}\) is \(\mathcal{L}_i\)-simple for every \(i\).
\label{block1}
\end{theorem}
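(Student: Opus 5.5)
The plan is to follow Block's original strategy: reduce the structure of \(\mathcal{L}\) to its socle, and recognise each minimal ideal through a structure theorem for differentiably simple algebras.

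\medskip

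\noindent\textbf{Direct implication.} Let \(I_1,\ldots,I_n\) be the distinct minimal ideals of the semisimple algebra \(\mathcal{L}\).

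\begin{enumerate}
\item None of the \(I_i\) is abelian, since a semisimple algebra has no non-zero abelian ideals.
\item Distinct minimal ideals meet trivially. By Remark~\ref{nonoverlapid} they therefore commute, and the socle \(\Sigma=I_1+\cdots+I_n\) equals \(I_1\oplus\cdots\oplus I_n\).
\item Each \(I_i\) is perfect, because \([I_i,I_i]\) is a non-zero ideal of \(\mathcal{L}\) inside \(I_i\). Each \(I_i\) is also centreless, because its centre is an abelian ideal of \(\mathcal{L}\).
\item The centraliser \(C=C_{\mathcal{L}}(\Sigma)\) is an ideal. If \(C\neq 0\), it contains some minimal ideal \(I_j\). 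Then \(I_j\) centralises itself, so it is abelian, a contradiction. Hence \(C=0\).
\item Consequently the adjoint map \(\mathcal{L}\to \mathrm{Der}(\Sigma)\), \(x\mapsto \mathrm{ad}\,x|_{\Sigma}\), is injective.
\item Since the \(I_i\) are perfect and centreless, every derivation of \(\Sigma\) maps \(I_i=[I_i,I_i]\) into itself. Hence \(\mathrm{Der}(\Sigma)=\bigoplus_i \mathrm{Der}(I_i)\).
\item The image of \(I_i\) is \(\mathrm{inDer}(I_i)\), which gives
\[
\bigoplus_i \mathrm{inDer}(I_i)\subseteq \mathcal{L}\subseteq \bigoplus_i \mathrm{Der}(I_i).
\]
\item Each \(I_i\) is \(\mathcal{L}_i\)-simple. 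An \(\mathcal{L}\)-invariant ideal of \(I_i\) is an ideal of \(\mathcal{L}\) contained in \(I_i\), so it is \(0\) or \(I_i\) by minimality.
\end{enumerate}

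\noindent\textbf{Identifying each \(I_i\).} It remains to show that each \(I_i\) has the form \(\mathcal{S}_i\otimes B_{n_i}\). This is the main obstacle.

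I would prove a structure theorem for a non-abelian Lie algebra \(A\) that is \(D\)-simple for some Lie algebra \(D\) of derivations. The target statement is \(A\cong \mathcal{S}\otimes R\), where \(\mathcal{S}\) is simple and \(R\) is a differentiably simple commutative associative algebra. Combined with Block's classification of differentiably simple commutative rings of characteristic \(p\) as truncated polynomial algebras \(B_m\), this yields the required form.

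To get \(\mathcal{S}\) and \(R\), I would proceed as follows.
\begin{enumerate}
\item Choose a maximal ideal \(M\) of \(A\) and set \(\mathcal{S}=A/M\), which is simple.
\item Note that \(\bigcap_{d} \ldots\) — more precisely, the largest \(D\)-invariant ideal contained in \(M\) — is zero by \(D\)-simplicity.
\item Use the centroid \(\Gamma(A)\) as a candidate for \(R\). It is commutative because \(A\) is perfect, and it is differentiably simple because \(D\) acts on it.
\item Show that \(A\) is a free \(\Gamma(A)\)-module with \(A/\mathfrak{m}A\cong\mathcal{S}\), where \(\mathfrak{m}\) is the maximal ideal of the local ring \(\Gamma(A)\).
\end{enumerate}
Verifying freeness and the tensor decomposition (step 4) is where I expect most of the work to lie.

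\medskip

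\noindent\textbf{Converse.} Assume the inclusions hold and each \(\mathcal{S}_i\otimes B_{n_i}\) is \(\mathcal{L}_i\)-simple. Let \(J\) be a non-zero abelian ideal of \(\mathcal{L}\).
\begin{enumerate}
\item For each \(i\), the bracket \([J,\mathrm{inDer}(\mathcal{S}_i\otimes B_{n_i})]\) is the inner-derivation image of an \(\mathcal{L}_i\)-ideal of \(\mathcal{S}_i\otimes B_{n_i}\). By \(\mathcal{L}_i\)-simplicity it is either \(0\) or everything.
\item If it is everything for some \(i\), then \(J\) contains \(\mathrm{inDer}(\mathcal{S}_i\otimes B_{n_i})\), which is non-abelian. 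This contradicts \(J\) being abelian.
\item So \(J\) centralises every \(\mathrm{inDer}(\mathcal{S}_i\otimes B_{n_i})\). A derivation commuting with all inner derivations of a centreless perfect algebra \(A\) satisfies \(\mathrm{ad}(Dx)=[D,\mathrm{ad}\,x]=0\), hence vanishes on \(A\). Thus \(J=0\), a contradiction.
\end{enumerate}
Therefore \(\mathcal{L}\) is semisimple.

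For the opposite direction of the converse, suppose \(\mathcal{L}\) is semisimple. A proper non-zero \(\mathcal{L}_i\)-ideal \(K\) of \(\mathcal{S}_i\otimes B_{n_i}\) would give a non-trivial ideal \(\mathrm{ad}(K)\) of \(\mathcal{L}\). Since \(\mathcal{S}_i\otimes B_{n_i}\) is not \(\mathcal{L}_i\)-simple, one can then extract an abelian or nilpotent \(\mathcal{L}_i\)-invariant piece from it, for instance the last non-zero power of \(\mathcal{S}_i\otimes \mathfrak{m}\) in a suitable series, contradicting semisimplicity. Making this extraction precise needs the tensor structure and is a secondary technical point.
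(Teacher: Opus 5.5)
The paper gives no proof of this statement; it is quoted from Block and used as a black box, so your proposal has to stand on its own.

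\textbf{What works.} Your reduction to the socle is correct and is exactly the first half of Block's argument: minimal ideals are non-abelian, perfect and centreless, $C_{\mathcal{L}}(\Sigma)=0$, $\mathrm{Der}(\Sigma)=\bigoplus_i\mathrm{Der}(I_i)$, and each $I_i$ is $\mathcal{L}_i$-simple. In step 2 you should add that $I_j\cap\sum_{i\neq j}I_i=0$, since otherwise $I_j\subseteq\sum_{i\neq j}I_i$ would force $[I_j,I_j]=0$. Your proof that $\mathcal{L}_i$-simplicity of all $\mathcal{S}_i\otimes B_{n_i}$ implies semisimplicity is also complete.

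\textbf{The gap.} The identification $I_i\cong\mathcal{S}_i\otimes B_{n_i}$ is the entire substance of Block's theorem, and you only outline it. The outline would not close it even if step 4 were done. Knowing that $A$ is free over the local ring $R=\Gamma(A)$ with $A/\mathfrak{m}A\cong\mathcal{S}$ only says that $A$ is a deformation of $\mathcal{S}$ over $R$. Such deformations need not be trivial: over $\mathbb{F}[\varepsilon]/(\varepsilon^2)$ they are governed by $H^2(\mathcal{S},\mathcal{S})$, and simple Lie algebras in characteristic $p$ need not be rigid.

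What forces $A\cong\mathcal{S}\otimes R$ is $D$-simplicity, i.e.\ derivations of $A$ inducing enough derivations of $R$. Your sketch uses $D$ only to make $\Gamma(A)$ differentiably simple, never to split $A$. You therefore need to prove, or cite, Block's structure theorem: a non-abelian finite-dimensional $D$-simple algebra is isomorphic to $\mathcal{S}\otimes B_m$ with $\mathcal{S}$ simple. The classification of differentiably simple commutative algebras alone does not give this.

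Note also that $\Gamma(\mathcal{S}\otimes B_m)\cong\Gamma(\mathcal{S})\otimes_{\mathbb{F}}B_m$ is a truncated polynomial algebra over the field $K=\Gamma(\mathcal{S})$, not over $\mathbb{F}$. So the factorisation must be taken over $K$ and then rewritten using $K\otimes_{\mathbb{F}}B_m\cong K[X_1,\ldots,X_m]/(X_1^p,\ldots,X_m^p)$.

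\textbf{The remaining direction of the converse.} The extraction you suggest does not work as literally stated. Let $\mathfrak{m}$ be the maximal ideal of $B_{n_i}$. The powers $\mathcal{S}_i\otimes\mathfrak{m}^k$ are in general not $\mathcal{L}_i$-invariant, because $\mathcal{L}_i$ may contain elements such as $1\otimes\frac{d}{dX_j}$ that lower the $\mathfrak{m}$-adic degree, while still failing to make $\mathcal{S}_i\otimes B_{n_i}$ simple when $n_i\geq 2$.

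The direct argument is that every proper ideal $K$ of $A=\mathcal{S}\otimes B_n$ is itself nilpotent. Since $A/(\mathcal{S}\otimes\mathfrak{m})\cong\mathcal{S}$ is simple, either $K\subseteq\mathcal{S}\otimes\mathfrak{m}$ or $K+\mathcal{S}\otimes\mathfrak{m}=A$. In the latter case perfectness gives $A=[A,A]\subseteq K+\mathcal{S}\otimes\mathfrak{m}^{2}$. Iterating yields $A=K+\mathcal{S}\otimes\mathfrak{m}^{2^j}$ for all $j$, hence $K=A$. Thus a proper non-zero $\mathcal{L}_i$-ideal $K$ lies in the nilpotent ideal $\mathcal{S}_i\otimes\mathfrak{m}$. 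Then $\mathrm{ad}(K)$ is a non-zero nilpotent ideal of $\mathcal{L}$, contradicting semisimplicity.
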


Assume the notation of theorem \ref{block1}. For each \(i\), define
\(
\mathcal{L}^i
=
\mathcal{L}\cap \mathrm{Der}(\mathcal{S}_i\otimes B_{n_i}).
\)

\begin{lemma}
Let \(\mathcal{L}\) be a semisimple Lie algebra over a field of characteristic \(p>0\). Then
\(
\mathcal{L}^i\neq 0
\qquad
\text{for all } i.
\)
\label{Li}
\end{lemma}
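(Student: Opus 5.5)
The plan is to show that \(\mathcal{L}^i\) contains the nonzero ideal \(\mathrm{inDer}(\mathcal{S}_i\otimes B_{n_i})\), using only the left-hand inclusion of Theorem~\ref{block1}. By that theorem,
\[
\bigoplus_{j=1}^{n}\mathrm{inDer}(\mathcal{S}_j\otimes B_{n_j})\subseteq \mathcal{L}.
\]
In particular, for each fixed \(i\) the summand \(\mathrm{inDer}(\mathcal{S}_i\otimes B_{n_i})\) lies in \(\mathcal{L}\). This summand also lies in \(\mathrm{Der}(\mathcal{S}_i\otimes B_{n_i})\), viewed as the \(i\)-th summand of \(\bigoplus_j \mathrm{Der}(\mathcal{S}_j\otimes B_{n_j})\). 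Hence
\[
\mathrm{inDer}(\mathcal{S}_i\otimes B_{n_i})\subseteq \mathcal{L}\cap\mathrm{Der}(\mathcal{S}_i\otimes B_{n_i})=\mathcal{L}^i .
\]

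It then suffices to show that \(\mathrm{inDer}(\mathcal{S}_i\otimes B_{n_i})\neq 0\). The inner derivations form the image of \(\mathrm{ad}\colon \mathcal{S}_i\otimes B_{n_i}\to \mathrm{Der}(\mathcal{S}_i\otimes B_{n_i})\), so the task reduces to showing that \(\mathcal{S}_i\otimes B_{n_i}\) is non-abelian. Since \(\mathcal{S}_i\) is simple (and hence non-abelian), choose \(x,y\in\mathcal{S}_i\) with \([x,y]\neq 0\). Then
\[
[x\otimes 1,\,y\otimes 1]=[x,y]\otimes 1\neq 0,
\]
because \(1\neq 0\) in \(B_{n_i}\) and the tensor product is taken over a field. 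So \(\mathrm{ad}(x\otimes 1)\neq 0\), which gives \(\mathcal{L}^i\neq 0\).

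The only real obstacle is the implicit convention in Theorem~\ref{block1} that each \(\mathcal{S}_i\) is a genuine simple Lie algebra (nonzero and non-abelian) rather than a degenerate one. Also, \(\mathcal{L}^i\) must be interpreted inside the ambient direct sum, with \(\mathrm{Der}(\mathcal{S}_i\otimes B_{n_i})\) identified with its image there. Once these conventions are fixed, the argument is immediate. Semisimplicity of \(\mathcal{L}\) enters only through its use to obtain Block's decomposition.
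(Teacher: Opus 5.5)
Your proof is correct and follows the paper's own argument. Both use the left-hand inclusion of Block's theorem to place \(\mathrm{inDer}(\mathcal{S}_i\otimes B_{n_i})\) inside \(\mathcal{L}^i=\mathcal{L}\cap\mathrm{Der}(\mathcal{S}_i\otimes B_{n_i})\), and you additionally justify that this algebra is nonzero via \([x\otimes 1,y\otimes 1]=[x,y]\otimes 1\neq 0\), a step the paper leaves implicit.
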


\begin{proof}
By theorem \ref{block1},
\(
\bigoplus_{i=1}^{n}\mathrm{inDer}(\mathcal{S}_i\otimes B_{n_i})
\subseteq
\mathcal{L}.
\)
Hence,
\(
\mathrm{inDer}(\mathcal{S}_i\otimes B_{n_i})
\subseteq
\mathcal{L}
\qquad
\text{for all } i.
\)
Therefore,
\(
\mathrm{inDer}(\mathcal{S}_i\otimes B_{n_i})
=
\mathrm{inDer}(\mathcal{S}_i\otimes B_{n_i})
\cap
\mathrm{Der}(\mathcal{S}_i\otimes B_{n_i})
\subseteq
\mathcal{L}\cap \mathrm{Der}(\mathcal{S}_i\otimes B_{n_i})
=
\mathcal{L}^i,
\)
which implies that \(\mathcal{L}^i\neq 0\).
\end{proof}

\begin{lemma}
Assume the notation of theorem \ref{block1} and lemma \ref{Li}. Let
\(
(i_k)_{1\leqslant k\leqslant r}
\subseteq
\{1,\ldots,n\},
\qquad
r\leqslant n.
\)
Then
\(
\bigoplus_{k=1}^{r}\mathcal{L}^{i_k}
\)
is an ideal of \(\mathcal{L}\).
\label{Liideal}
\end{lemma}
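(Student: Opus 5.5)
The plan is to work inside the ambient algebra
\(
\mathcal{D}=\bigoplus_{i=1}^{n}\mathrm{Der}(\mathcal{S}_i\otimes B_{n_i}),
\)
in which, by theorem \ref{block1}, \(\mathcal{L}\) sits as a subalgebra. The key structural fact is that the summands \(\mathrm{Der}(\mathcal{S}_i\otimes B_{n_i})\) are ideals of \(\mathcal{D}\), because \(\mathcal{D}\) is a direct sum of Lie algebras. Consequently, each \(\mathcal{L}^i=\mathcal{L}\cap \mathrm{Der}(\mathcal{S}_i\otimes B_{n_i})\) is an ideal of \(\mathcal{L}\): for \(x\in\mathcal{L}\) and \(y\in\mathcal{L}^i\), the bracket \([x,y]\) lies in \(\mathcal{L}\) since \(\mathcal{L}\) is a subalgebra. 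It also lies in \(\mathrm{Der}(\mathcal{S}_i\otimes B_{n_i})\), since that summand is an ideal of \(\mathcal{D}\). More concretely, if \(x=\sum_j x_j\) with \(x_j\) the component of \(x\) in the \(j\)-th summand, then \([x,y]=[x_i,y]\) because the summands commute pairwise.

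Next, I will check that the sum \(\sum_{k=1}^{r}\mathcal{L}^{i_k}\) is genuinely a direct sum of Lie algebras. I will assume, as is implicit in the statement, that the indices \(i_k\) are pairwise distinct. For \(k\neq k'\), the ideals \(\mathcal{L}^{i_k}\) and \(\mathcal{L}^{i_{k'}}\) lie in different summands of \(\mathcal{D}\), so their intersection is \(\{0\}\). More strongly, a sum \(\sum_k y_k\) with \(y_k\in\mathcal{L}^{i_k}\) vanishes only if each \(y_k\) vanishes, because the components in distinct summands of \(\mathcal{D}\) are independent. Hence the vector space sum is direct, and by Remark \ref{nonoverlapid} (applied inductively, or directly because distinct summands of \(\mathcal{D}\) commute) the bracket is the direct-product bracket. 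This justifies writing \(\bigoplus_{k=1}^{r}\mathcal{L}^{i_k}\).

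Finally, a sum of ideals is an ideal: for \(x\in\mathcal{L}\),
\(
[x,\sum_k y_k]=\sum_k [x,y_k]\in\sum_k\mathcal{L}^{i_k}.
\)
This proves that \(\bigoplus_{k=1}^{r}\mathcal{L}^{i_k}\) is an ideal of \(\mathcal{L}\). Lemma \ref{Li} is not needed for ideal-ness itself; it only guarantees that each summand is nonzero, which is relevant in later applications.

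The only step needing care is the first one: making precise that the \(\mathrm{Der}(\mathcal{S}_i\otimes B_{n_i})\) are ideals of \(\mathcal{D}\) and that \(\mathcal{L}\) is a subalgebra of \(\mathcal{D}\). This is immediate once \(\mathcal{D}\) is viewed as an external direct sum of Lie algebras. I expect no real obstacle beyond this bookkeeping, together with handling possible repeated indices by passing to the underlying set \(\{i_1,\ldots,i_r\}\).
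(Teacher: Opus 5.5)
Your proposal is correct and follows the paper's argument. Each \(\mathcal{L}^{i}=\mathcal{L}\cap\mathrm{Der}(\mathcal{S}_i\otimes B_{n_i})\) is an ideal of \(\mathcal{L}\) because the summands are ideals of the ambient direct sum, and a finite sum of ideals is an ideal; your additional check that the sum is direct, which the paper takes for granted, is a harmless refinement.
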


\begin{proof}
Since
\(
\bigoplus_{i=1}^{n}\mathrm{Der}(\mathcal{S}_i\otimes B_{n_i})
\)
is a direct sum of Lie algebras,
\(
\mathrm{Der}(\mathcal{S}_{i_k}\otimes B_{n_{i_k}})
\)
is an ideal of
\(
\bigoplus_{i=1}^{n}\mathrm{Der}(\mathcal{S}_i\otimes B_{n_i})
\)
for every \(i_k\). Consequently,
\(
\mathcal{L}^{i_k}
=
\mathcal{L}\cap
\mathrm{Der}(\mathcal{S}_{i_k}\otimes B_{n_{i_k}})
\)
is an ideal of \(\mathcal{L}\) for every \(k\). Since a finite sum of ideals is again an ideal, it follows that
\(
\bigoplus_{k=1}^{r}\mathcal{L}^{i_k}
\)
is an ideal of \(\mathcal{L}\).
\end{proof}

\begin{proposition}
Let \(\mathcal{L}\) be a semisimple Lie algebra over a field of characteristic \(p>0\). If \(\mathcal{L}\) has chief length \(2\), then either
\(
\mathcal{S}\otimes B_n
\cong
\mathrm{inDer}(\mathcal{S}\otimes B_n)
\subseteq
\mathcal{L}
\subseteq
\mathrm{Der}(\mathcal{S}\otimes B_n),
\)
or
\(
\mathcal{L}
=
\mathcal{S}_1\oplus \mathcal{S}_2,
\)
where \(\mathcal{S}\), \(\mathcal{S}_1\), and \(\mathcal{S}_2\) are simple Lie algebras.
\label{decMAX}
\end{proposition}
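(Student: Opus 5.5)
We apply Block's theorem (Theorem~\ref{block1}) to \(\mathcal{L}\), which gives \(n\), simple algebras \(\mathcal{S}_i\) and integers \(n_i\). We then split into cases according to the number \(n\) of Block components. The tool throughout is Theorem~\ref{MAX}: since \(l(\mathcal{L})=2\), every proper nonzero ideal is maximal, so \(\mathcal{L}\) admits no chain \(0\subsetneq J\subsetneq I\subsetneq \mathcal{L}\) of ideals.

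\textbf{Step 1: \(n\le 2\).} Suppose \(n\geq 3\). By Lemma~\ref{Li} each \(\mathcal{L}^i\) is nonzero, and by Lemma~\ref{Liideal} both \(\mathcal{L}^1\) and \(\mathcal{L}^1\oplus\mathcal{L}^2\) are ideals of \(\mathcal{L}\). These sums are direct because the \(\mathrm{Der}(\mathcal{S}_i\otimes B_{n_i})\) are independent summands. Since \(\mathcal{L}^3\neq 0\) lies outside \(\mathcal{L}^1\oplus\mathcal{L}^2\), we get
\[
0\subsetneq \mathcal{L}^1\subsetneq \mathcal{L}^1\oplus\mathcal{L}^2\subsetneq\mathcal{L},
\]
which contradicts Theorem~\ref{MAX}. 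Hence \(n\le 2\).

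\textbf{Step 2: \(n=1\).} Then \(\mathrm{inDer}(\mathcal{S}\otimes B_{m})\subseteq\mathcal{L}\subseteq\mathrm{Der}(\mathcal{S}\otimes B_m)\). We must check \(\mathcal{S}\otimes B_m\cong\mathrm{inDer}(\mathcal{S}\otimes B_m)\). It suffices that the center of \(\mathcal{S}\otimes B_m\) is zero. An element \(\sum s_j\otimes b_j\) with the \(b_j\) linearly independent that commutes with \(\mathcal{S}\otimes 1\) must have each \(s_j\) central in \(\mathcal{S}\), hence \(s_j=0\). This gives the first alternative.

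\textbf{Step 3: \(n=2\).} The nonzero ideals \(\mathcal{L}^1\) and \(\mathcal{L}^2\) intersect trivially, so \(\mathcal{L}^1\subsetneq \mathcal{L}^1\oplus\mathcal{L}^2\). Maximality of \(\mathcal{L}^1\) then forces \(\mathcal{L}=\mathcal{L}^1\oplus\mathcal{L}^2\).

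Next, each \(\mathcal{L}^i\) must be simple. An ideal of \(\mathcal{L}^1\) is an ideal of \(\mathcal{L}\), since \(\mathcal{L}^2\) centralizes it. A proper nonzero ideal \(J\subsetneq\mathcal{L}^1\) would then give the chain \(0\subsetneq J\subsetneq\mathcal{L}^1\subsetneq\mathcal{L}\), contradicting Theorem~\ref{MAX}. Moreover \(\mathcal{L}^i\) is nonabelian, because it contains \(\mathrm{inDer}(\mathcal{S}_i\otimes B_{n_i})\), which is nonzero and perfect. Hence \(\mathcal{L}^i\) is simple, and setting \(\mathcal{S}_i'=\mathcal{L}^i\) yields the second alternative \(\mathcal{L}=\mathcal{S}_1'\oplus\mathcal{S}_2'\) with both summands simple.

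If we additionally want \(\mathcal{L}^i=\mathrm{inDer}(\mathcal{S}_i\otimes B_{n_i})\), we argue as follows. The inner derivations form a nonzero ideal of the simple algebra \(\mathcal{L}^i\), so they must equal all of \(\mathcal{L}^i\). Since \(\mathcal{L}^i\) is simple, \(\mathcal{S}_i\otimes B_{n_i}\) then has no proper nonzero ideals. When \(n_i>0\), the ideal \(\mathcal{S}_i\otimes(X_1,\ldots,X_{n_i})\) is a nontrivial proper ideal, so this forces \(n_i=0\).

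\textbf{Main obstacle.} The delicate point is Step 3's identification \(\mathcal{L}=\mathcal{L}^1\oplus\mathcal{L}^2\). A priori \(\mathcal{L}\) sits diagonally inside the sum of its projections \(\mathcal{L}_1\oplus\mathcal{L}_2\), and the argument depends on the maximality of \(\mathcal{L}^1\) combined with \(\mathcal{L}^2\neq0\). I would verify carefully that \(\mathcal{L}^1\cap\mathcal{L}^2=0\) and that \(\mathcal{L}^2\not\subseteq\mathcal{L}^1\), since together these make the inclusion \(\mathcal{L}^1\subsetneq\mathcal{L}^1\oplus\mathcal{L}^2\) strict.
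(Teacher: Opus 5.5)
Your proof is correct. For \(n\geq 3\) and for the identification \(\mathcal{L}=\mathcal{L}^1\oplus\mathcal{L}^2\) it follows the paper's argument; the difference is in how you handle the summands when \(n=2\).

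The paper first invokes Block's description of derivations (Theorem~\ref{decSBn}) to force \(\mathcal{L}^i\subseteq \mathrm{Der}(\mathcal{S}_i)\otimes B_{n_i}\). Only then does it use that \(\mathrm{inDer}(\mathcal{S}_i\otimes B_{n_i})\) is an ideal of \(\mathcal{L}^i\) to get \(\mathcal{L}^i=\mathrm{inDer}(\mathcal{S}_i\otimes B_{n_i})\) and \(n_i=0\).

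You instead note that \(\mathcal{L}^i\) is a nonabelian algebra with no proper nonzero ideals, which already gives the stated alternative. Your optional refinement gets \(\mathcal{L}^i=\mathrm{inDer}(\mathcal{S}_i\otimes B_{n_i})\) directly from the standard fact that inner derivations form an ideal of the full derivation algebra. This makes the detour through Theorem~\ref{decSBn} unnecessary.

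It also avoids the paper's assertion that \(\mathrm{Der}(\mathcal{S}_i)\otimes B_{n_i}\) is an ideal of \(\mathrm{Der}(\mathcal{S}_i\otimes B_{n_i})\). When \(n_i>0\), that assertion needs derivations of \(\mathcal{S}_i\) to commute with the centroid \(\Gamma\), because \([\gamma\otimes\partial, D\otimes b]\) contains the term \([\gamma,D]\otimes b\,\partial\). Both routes end with the same information, namely \(n_i=0\) and \(\mathcal{L}^i=\mathrm{inDer}(\mathcal{S}_i)\cong\mathcal{S}_i\).

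You also verify \(\mathcal{S}\otimes B_m\cong\mathrm{inDer}(\mathcal{S}\otimes B_m)\) through the trivial centre, which the paper takes for granted.

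The ``main obstacle'' you flag is in fact immediate. \(\mathcal{L}^1\cap\mathcal{L}^2=0\) follows from the ambient direct sum, and \(\mathcal{L}^2\neq 0\) from Lemma~\ref{Li}. Maximality of \(\mathcal{L}^1\) then rules out any diagonal part of \(\mathcal{L}\), exactly as in your Step~3.
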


\begin{proof}
Let \(\mathcal{L}\) be a semisimple Lie algebra of characteristic \(p>0\) and chief length \(2\). Assume the notation of theorem \ref{block1} and lemma \ref{Liideal}. By lemma \ref{Liideal},
\(
\bigoplus_{k=1}^{r}\mathcal{L}^{i_k}
\)
is an ideal of \(\mathcal{L}\) for every choice of indices \((i_k)\).

Suppose that \(n\geqslant 3\). Then
\(
\mathcal{L}^1
\subsetneq
\mathcal{L}^1\oplus \mathcal{L}^2
\subsetneq
\mathcal{L},
\)
which yields a chain of proper ideals of \(\mathcal{L}\). This contradicts the assumption that \(\mathcal{L}\) has chief length \(2\). Hence,
\(
n\leqslant 2.
\)

Assume first that \(n=2\). The same argument shows that
\(
\mathcal{L}
=
\mathcal{L}^1\oplus \mathcal{L}^2.
\)
Moreover,
\(
\mathcal{S}_i\otimes B_{n_i}
\cong
\mathrm{inDer}(\mathcal{S}_i\otimes B_{n_i})
\subseteq
\mathcal{L}^i
\subseteq
\mathrm{Der}(\mathcal{S}_i\otimes B_{n_i})
\qquad
(i\in\{1,2\}).
\)

By theorem \ref{decSBn},
\(
\mathrm{Der}(\mathcal{S}_i\otimes B_{n_i})
=
\mathrm{Der}(\mathcal{S}_i)\otimes B_{n_i}
+
\Gamma\otimes \mathrm{Der}(B_{n_i}),
\)
where \(\Gamma\) denotes the centroid of \(\mathcal{S}_i\). In particular,
\(
\mathrm{Der}(\mathcal{S}_i)\otimes B_{n_i}
\)
is an ideal of
\(
\mathrm{Der}(\mathcal{S}_i\otimes B_{n_i}).
\)
Consequently,
\(
\bigl(\mathrm{Der}(\mathcal{S}_i)\otimes B_{n_i}\bigr)\cap \mathcal{L}^i
\)
is an ideal of \(\mathcal{L}^i\). Since
\(
\mathcal{L}
=
\mathcal{L}^1\oplus \mathcal{L}^2,
\)
every ideal of \(\mathcal{L}^i\) is also an ideal of \(\mathcal{L}\). Because \(\mathcal{L}\) has chief length \(2\), the algebra \(\mathcal{L}^i\) admits no non-trivial proper ideals. Therefore,
\(
\bigl(\mathrm{Der}(\mathcal{S}_i)\otimes B_{n_i}\bigr)\cap \mathcal{L}^i
=
\mathcal{L}^i.
\)
Hence,
\(
\mathcal{L}^i
\subseteq
\mathrm{Der}(\mathcal{S}_i)\otimes B_{n_i}.
\)

Since
\(
\mathrm{inDer}(\mathcal{S}_i\otimes B_{n_i})
\)
is an ideal of
\(
\mathrm{Der}(\mathcal{S}_i)\otimes B_{n_i},
\)
it follows that
\(
\mathrm{inDer}(\mathcal{S}_i\otimes B_{n_i})
\)
is an ideal of \(\mathcal{L}^i\). Again using the fact that \(\mathcal{L}^i\) has no non-trivial proper ideals, we obtain
\(
\mathcal{L}^i
=
\mathrm{inDer}(\mathcal{S}_i\otimes B_{n_i}).
\)

Suppose now that \(n_i\neq 0\). Then the associative algebra \(B_{n_i}\) possesses a non-trivial proper ideal \(P\). Consequently,
\(
\mathcal{S}_i\otimes P
\)
is a non-trivial proper ideal of
\(
\mathcal{S}_i\otimes B_{n_i}
\cong
\mathrm{inDer}(\mathcal{S}_i\otimes B_{n_i})
=
\mathcal{L}^i,
\)
which is impossible. Therefore,
\(
n_1=n_2=0.
\)

Hence,
\(
\mathcal{L}^i
\subseteq
\mathrm{Der}(\mathcal{S}_i\otimes \mathbb{F})
=
\mathrm{Der}(\mathcal{S}_i)\otimes \mathbb{F}
\cong
\mathrm{Der}(\mathcal{S}_i).
\)
Thus,
\(
\mathrm{inDer}(\mathcal{S}_i)
\subseteq
\mathcal{L}^i
\subseteq
\mathrm{Der}(\mathcal{S}_i),
\)
and
\(
\mathcal{L}
=
\mathcal{L}^1\oplus \mathcal{L}^2.
\)

Since
\(
\mathrm{inDer}(\mathcal{S}_i)
\)
is an ideal of
\(
\mathrm{Der}(\mathcal{S}_i),
\)
it is also an ideal of \(\mathcal{L}^i\), and therefore an ideal of \(\mathcal{L}\). If
\(
\mathrm{inDer}(\mathcal{S}_{i_0})
\subsetneq
\mathcal{L}^{i_0}
\)
for some \(i_0\in\{1,2\}\), then
\(
\mathrm{inDer}(\mathcal{S}_{i_0})
\subsetneq
\mathcal{L}^{i_0}
\subsetneq
\mathcal{L}
\)
is a chain of proper ideals, contradicting the fact that \(\mathcal{L}\) has chief length \(2\). Hence,
\(
\mathcal{L}^i
=
\mathrm{inDer}(\mathcal{S}_i)
\cong
\mathcal{S}_i,
\)
and therefore
\(
\mathcal{L}
=
\mathcal{S}_1\oplus \mathcal{S}_2.
\)

Finally, if \(n=1\), then \(\mathcal{L}\) is of the form
\(
\mathcal{S}\otimes B_n
\cong
\mathrm{inDer}(\mathcal{S}\otimes B_n)
\subseteq
\mathcal{L}
\subseteq
\mathrm{Der}(\mathcal{S}\otimes B_n),
\)
as required.
\end{proof}

\begin{theorem} \label{aoan}
Every Lie algebra of the form
\(
\mathcal{L}
=
\mathcal{S}_1\oplus \mathcal{S}_2,
\)
where \(\mathcal{S}_1\) and \(\mathcal{S}_2\) are simple Lie algebras, is semisimple and has chief length \(2\).
\end{theorem}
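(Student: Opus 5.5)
The plan is to prove both claims directly, working with an arbitrary ideal of \(\mathcal{L}=\mathcal{S}_1\oplus\mathcal{S}_2\) over any field. Here a simple Lie algebra is taken to be non-abelian with no non-trivial proper ideals, so \([\mathcal{S}_i,\mathcal{S}_i]=\mathcal{S}_i\) and the centre of \(\mathcal{S}_i\) is zero.

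The core step is to determine all ideals of \(\mathcal{L}\). Let \(I\) be an ideal of \(\mathcal{L}\) and let \(\pi_i\) be the projection onto \(\mathcal{S}_i\).
\begin{enumerate}
\item Since \([\mathcal{S}_i,I]=[\mathcal{S}_i,\pi_i(I)]\), the set \([\mathcal{S}_i,I]\) is contained in \(I\cap\mathcal{S}_i\). This intersection is an ideal of \(\mathcal{S}_i\), so it equals \(0\) or \(\mathcal{S}_i\).
\item Suppose \(I\cap\mathcal{S}_1=0\). Then \([\mathcal{S}_1,\pi_1(I)]=0\), so \(\pi_1(I)\) lies in the centre of \(\mathcal{S}_1\), which is zero. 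Hence \(I\subseteq\mathcal{S}_2\).
\item Suppose instead \(\mathcal{S}_1\subseteq I\). Then \(I=\mathcal{S}_1\oplus(I\cap\mathcal{S}_2)\).
\end{enumerate}
The same reasoning applies with the indices swapped. It follows that the only ideals of \(\mathcal{L}\) are \(0\), \(\mathcal{S}_1\), \(\mathcal{S}_2\) and \(\mathcal{L}\).

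Semisimplicity follows from this list. Neither \(\mathcal{S}_1\) nor \(\mathcal{S}_2\) is solvable, because each is perfect and non-zero. So the only solvable ideal is \(0\), the radical vanishes, and \(\mathcal{L}\) is semisimple. For chief length, the proper non-trivial ideals are exactly \(\mathcal{S}_1\) and \(\mathcal{S}_2\). Each is maximal, since \(\mathcal{L}/\mathcal{S}_1\cong\mathcal{S}_2\) is simple and likewise for \(\mathcal{S}_2\). Theorem~\ref{MAX} then gives \(l(\mathcal{L})=2\). Alternatively, \(0\subsetneq\mathcal{S}_1\subsetneq\mathcal{L}\) is explicitly a chief series, and the corollary on equal lengths of chief series finishes the argument.

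The main obstacle is step 2, ruling out "diagonal" ideals. In characteristic \(p>0\) the paper cannot fall back on the classical characteristic-zero decomposition of a semisimple algebra into simple ideals, so the argument must rely only on the centre of a simple Lie algebra being zero. That fact holds in every characteristic: the centre is an ideal, and it cannot be all of \(\mathcal{S}_i\) because \(\mathcal{S}_i\) is non-abelian.
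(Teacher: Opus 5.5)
Your proof is correct, and it takes a somewhat different route from the paper's.

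The paper's proof just points back to the characteristic-zero proposition. In the present setting that argument takes the chain \(0\subsetneq\mathcal{S}_1\subsetneq\mathcal{L}\), notes that both factors \(\mathcal{S}_1\) and \(\mathcal{L}/\mathcal{S}_1\cong\mathcal{S}_2\) are simple and hence minimal ideals of the relevant quotients, so the chain is a chief series. It then concludes \(l(\mathcal{L})=2\) from the invariance of chief length. This is exactly your ``alternatively'' sentence, and it works verbatim in characteristic \(p\). For the chief-length claim it is shorter than your approach, because it never needs the full list of ideals of \(\mathcal{L}\).

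Your approach classifies all ideals, using only that a simple Lie algebra has zero centre to rule out diagonal ideals. This gains you the semisimplicity half of the statement. In the characteristic-zero proposition, semisimplicity is a \emph{hypothesis}, so ``the proof is identical'' never actually shows that \(\mathcal{S}_1\oplus\mathcal{S}_2\) is semisimple in positive characteristic. Your argument does show this, uniformly in every characteristic, and it also yields the complete ideal lattice \(\{0,\mathcal{S}_1,\mathcal{S}_2,\mathcal{L}\}\).

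One small caveat: if you close via Theorem~\ref{MAX}, read it as ``every proper non-zero ideal is maximal, and at least one exists.'' Taken literally, \(\{0\}\) counts as a proper ideal and the statement fails. Closing with the explicit chief series and the corollary on equal lengths avoids this issue entirely.
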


\begin{proof}
The proof is identical to that of the corresponding result for semisimple Lie algebras of characteristic \(0\).
\end{proof}

It remains to study Lie algebras satisfying
\(
\mathcal{S}\otimes B_n
\cong
\mathrm{inDer}(\mathcal{S}\otimes B_n)
\subseteq
\mathcal{L}
\subseteq
\mathrm{Der}(\mathcal{S}\otimes B_n).
\)

\begin{theorem}
Let \(\mathcal{L}\) be a semisimple Lie algebra satisfying
\(
\mathcal{S}\otimes B_n
\cong
\mathrm{inDer}(\mathcal{S}\otimes B_n)
\subseteq
\mathcal{L}
\subseteq
\mathrm{Der}(\mathcal{S}\otimes B_n).
\)
Then the following statements are equivalent:
\begin{enumerate}
    \item \(\mathcal{L}\) has chief length \(2\);
    
    \item the quotient Lie algebra
    \(
    \mathcal{L}/\mathrm{inDer}(\mathcal{S}\otimes B_n)
    \)
    is either simple or one-dimensional.
\end{enumerate}
\label{qosim=max}
\end{theorem}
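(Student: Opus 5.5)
The plan is to show that \(I:=\mathrm{inDer}(\mathcal{S}\otimes B_n)\) is a minimal ideal of \(\mathcal{L}\). It then follows that \(l(\mathcal{L})=1+l(\mathcal{L}/I)\), and the equivalence reduces to the description of Lie algebras of chief length \(1\).

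\emph{Step 1: \(I\) is an ideal of \(\mathcal{L}\).} This holds because \(\mathrm{inDer}(\mathcal{S}\otimes B_n)\) is an ideal of \(\mathrm{Der}(\mathcal{S}\otimes B_n)\), via the identity \([D,\mathrm{ad}_x]=\mathrm{ad}_{D(x)}\). Since \(I\subseteq\mathcal{L}\subseteq\mathrm{Der}(\mathcal{S}\otimes B_n)\), the same identity shows that \(I\) is an ideal of \(\mathcal{L}\).

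\emph{Step 2: \(I\) is minimal in \(\mathcal{L}\).} This is the step I expect to be the main obstacle, since it requires translating between ideals of \(\mathcal{L}\) and \(\mathcal{L}\)-ideals of \(\mathcal{S}\otimes B_n\).
\begin{itemize}
\item Let \(J\) be a non-zero ideal of \(\mathcal{L}\) with \(J\subseteq I\). Write \(J=\mathrm{ad}(K)\) for a subspace \(K\subseteq \mathcal{S}\otimes B_n\). This is possible because \(\mathrm{ad}\colon \mathcal{S}\otimes B_n\to I\) is an isomorphism, as used in the statement.
\item Since \(I\subseteq\mathcal{L}\), we have \([\mathrm{ad}_y,J]\subseteq J\) for all \(y\in\mathcal{S}\otimes B_n\). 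Hence \(K\) is an ideal of \(\mathcal{S}\otimes B_n\).
\item For \(D\in\mathcal{L}\), the relation \([D,\mathrm{ad}_k]=\mathrm{ad}_{D(k)}\in J\) gives \(D(K)\subseteq K\). Thus \(K\) is an \(\mathcal{L}\)-ideal of \(\mathcal{S}\otimes B_n\).
\item Because \(\mathcal{L}\) is semisimple and here there is a single summand, so that \(\mathcal{L}\) is its own projection \(\mathcal{L}_1\), the converse part of Theorem \ref{block1} says that \(\mathcal{S}\otimes B_n\) is \(\mathcal{L}\)-simple. Therefore \(K=\mathcal{S}\otimes B_n\), that is, \(J=I\).
\end{itemize}
Hence \(I/\{0\}\) is a chief factor of \(\mathcal{L}\).

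\emph{Step 3: computing the chief length.} I would argue exactly as in the \((\Leftarrow)\) part of Theorem \ref{ind1}. Take a chief series of \(\mathcal{L}/I\) and pull it back by the canonical projection. Prepend \(\{0\}\subsetneq I\), which is a chief factor by Step 2. The correspondence theorem then shows that every factor of the resulting chain is a chief factor. Therefore \(l(\mathcal{L})=1+l(\mathcal{L}/I)\), by the corollary to Towers' theorem that all chief series have the same length.

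\emph{Step 4: conclusion.} By Step 3, \(l(\mathcal{L})=2\) if and only if \(l(\mathcal{L}/I)=1\). By the remark following the definition of chief length, this holds if and only if \(\mathcal{L}/I\) is simple or one-dimensional. The degenerate case \(\mathcal{L}=I\) is consistent with this: there \(l(\mathcal{L})=1\), and the zero quotient is neither simple nor one-dimensional, so both \((1)\) and \((2)\) fail.
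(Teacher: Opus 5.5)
Your proof is correct. It uses the same key input as the paper: Block's converse makes \(\mathcal{S}\otimes B_n\) \(\mathcal{L}\)-simple, so \(I=\mathrm{inDer}(\mathcal{S}\otimes B_n)\) is a minimal ideal of \(\mathcal{L}\). The organisation, however, is different.

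The paper treats the two directions separately through Theorem~\ref{MAX}:
\begin{itemize}
\item For \((1)\Rightarrow(2)\), it pulls back a non-trivial proper ideal of \(\mathcal{L}/I\) to an ideal of \(\mathcal{L}\) lying strictly between \(I\) and \(\mathcal{L}\).
\item For \((2)\Rightarrow(1)\), it sorts the proper ideals of \(\mathcal{L}\) into two kinds. Those inside \(I\) are \(0\) or \(I\). Those not inside \(I\) satisfy \(J+I=\mathcal{L}\) and \(J\cap I=0\), so they are complements of \(I\). It then concludes that every proper ideal is maximal.
\end{itemize}

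You instead prove \(l(\mathcal{L})=1+l(\mathcal{L}/I)\) once, as in Theorem~\ref{ind1}, and read off both implications from the description of chief length \(1\). This has three advantages.
\begin{itemize}
\item It needs no case analysis. The paper's complement case is in fact vacuous: a complement \(J\) would satisfy \([J,I]=0\), so every \(D\in J\) would map \(\mathcal{S}\otimes B_n\) into its centre, which is zero.
\item It spells out, via \([D,\mathrm{ad}_x]=\mathrm{ad}_{D(x)}\) and injectivity of \(\mathrm{ad}\), how ideals of \(\mathcal{L}\) contained in \(I\) correspond to \(\mathcal{L}\)-ideals of \(\mathcal{S}\otimes B_n\). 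The paper compresses this into a single ``Consequently''.
\item It handles the degenerate case \(\mathcal{L}=I\) explicitly. The paper's forward direction otherwise uses only the correspondence theorem, but it tacitly assumes \(\mathcal{L}\neq I\) when passing from ``no non-trivial proper ideals'' to ``simple or one-dimensional''. Your minimality step is exactly what justifies that assumption.
\end{itemize}
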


\begin{proof}
Assume first that \(\mathcal{L}\) has chief length \(2\). Let
\(
J
\)
be a proper ideal of
\(
\mathcal{L}/\mathrm{inDer}(\mathcal{S}\otimes B_n),
\)
and let
\(
\pi^{-1}(J)
\)
denote its preimage in \(\mathcal{L}\). Then
\(
\mathrm{inDer}(\mathcal{S}\otimes B_n)
\subsetneq
\pi^{-1}(J)
\subsetneq
\mathcal{L},
\)
which yields a chain of proper ideals of \(\mathcal{L}\). This contradicts the assumption that \(\mathcal{L}\) has chief length \(2\). Hence,
\(
\mathcal{L}/\mathrm{inDer}(\mathcal{S}\otimes B_n)
\)
has no non-trivial proper ideals. Therefore, it is either simple or one-dimensional.

Conversely, assume that
\(
\mathcal{L}/\mathrm{inDer}(\mathcal{S}\otimes B_n)
\)
is either simple or one-dimensional. Since
\(
\mathcal{L}
\subseteq
\mathrm{Der}(\mathcal{S}\otimes B_n),
\)
theorem \ref{block1} implies that
\(
\mathrm{inDer}(\mathcal{S}\otimes B_n)
\cong
\mathcal{S}\otimes B_n
\)
is \(\mathcal{L}\)-simple. Consequently,
\(
\mathrm{inDer}(\mathcal{S}\otimes B_n)
\)
contains no non-trivial proper ideals.

Let \(I\) be a proper ideal of \(\mathcal{L}\) such that
\(
I
\nsubseteq
\mathrm{inDer}(\mathcal{S}\otimes B_n).
\)
Then
\(
\frac{I+\mathrm{inDer}(\mathcal{S}\otimes B_n)}
{\mathrm{inDer}(\mathcal{S}\otimes B_n)}
\)
is a non-zero ideal of
\(
\mathcal{L}/\mathrm{inDer}(\mathcal{S}\otimes B_n).
\)
Since the quotient algebra is either simple or one-dimensional, it follows that
\(
I+\mathrm{inDer}(\mathcal{S}\otimes B_n)
=
\mathcal{L}.
\)
Moreover,
\(
I\cap \mathrm{inDer}(\mathcal{S}\otimes B_n)
\)
is an ideal of
\(
\mathrm{inDer}(\mathcal{S}\otimes B_n).
\)
As
\(
\mathrm{inDer}(\mathcal{S}\otimes B_n)
\)
is \(\mathcal{L}\)-simple, we must have
\(
I\cap \mathrm{inDer}(\mathcal{S}\otimes B_n)=0.
\)
Therefore,
\(
\mathcal{L}
=
I\oplus \mathrm{inDer}(\mathcal{S}\otimes B_n).
\)

It follows that every proper ideal of \(\mathcal{L}\) is either
\(
\mathrm{inDer}(\mathcal{S}\otimes B_n)
\)
or a complementary ideal \(I\) satisfying
\(
\mathcal{L}
=
I\oplus \mathrm{inDer}(\mathcal{S}\otimes B_n).
\)
Hence, every non-trivial proper ideal of \(\mathcal{L}\) is maximal, and therefore \(\mathcal{L}\) has chief length \(2\).
\end{proof}

Theorem \ref{qosim=max} shows that the classification of semisimple Lie algebras of chief length \(2\) reduces to the study of the quotient algebra
\(
\mathcal{L}/\mathrm{inDer}(\mathcal{S}\otimes B_n).
\)
By theorem \ref{decSBn},
\[
\mathrm{Der}(\mathcal{S}\otimes B_n)
=
\bigl(\mathrm{Der}(\mathcal{S})\otimes B_n\bigr)
\dotplus
\bigl(\Gamma\otimes \mathrm{Der}(B_n)\bigr),
\]
where \(\Gamma\) denotes the centroid of \(\mathcal{S}\). Consequently,
\(
\mathcal{L}/\mathrm{inDer}(\mathcal{S}\otimes B_n)
\)
may be identified with a subalgebra of
\(
\bigl(\mathrm{outDer}(\mathcal{S})\otimes B_n\bigr)
\dotplus
\bigl(\Gamma\otimes \mathrm{Der}(B_n)\bigr).
\)

Therefore, the classification problem reduces to determining the simple or one-dimensional subalgebras of
\(
\bigl(\mathrm{outDer}(\mathcal{S})\otimes B_n\bigr)
\dotplus
\bigl(\Gamma\otimes \mathrm{Der}(B_n)\bigr)
\)
for which
\(
\mathrm{inDer}(\mathcal{S}\otimes B_n)
\)
is an irreducible module. In this paper, we restrict our attention to the case
\(
\mathcal{L}
=
\mathrm{Der}(\mathcal{S}\otimes B_n).
\)

Recall that \(B_n= \mathbb{F}[X_1, \cdots , X_n]/(X_1^p,\cdots,X_n^p)\) where \(p\) is the characteristic of \(\mathbb{F}\). Define the map
\(
\frac{d}{dx_i} : B_n \longrightarrow B_n
\)
by
\(
\frac{d}{dx_i}
\left(
\sum_{\alpha} a_{\alpha}
x_1^{\alpha_1}x_2^{\alpha_2}\cdots x_n^{\alpha_n}
\right)
=
\sum_{\alpha}
a_{\alpha}\alpha_i\,
x_1^{\alpha_1}\cdots
x_i^{\alpha_i-1}\cdots
x_n^{\alpha_n},
\)
where
\(
\alpha=(\alpha_1,\alpha_2,\dots,\alpha_n)\in \mathbb{N}^n
\)
and \(a_\alpha \in F\).

\begin{proposition}
\(
\mathrm{Der}(B_n)
=
\sum_{i=1}^{n} B_n\frac{d}{dX_i}.
\)
That is, \(\mathrm{Der}(B_n)\) is generated, as a \(B_n\)-module, by the derivations
\(
\frac{d}{dX_i}.
\)
Moreover, \(\mathrm{Der}(B_n)\) is a simple Lie algebra.
\label{DerBn}
\end{proposition}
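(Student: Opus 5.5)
The plan has two independent parts: the description of \(\mathrm{Der}(B_n)\) as a \(B_n\)-module, and the simplicity of this Lie algebra, which is the Jacobson--Witt algebra \(W(n)\). Throughout we write \(\partial_i=\frac{d}{dX_i}\) and \(x_i\) for the image of \(X_i\) in \(B_n\).

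\textbf{Step 1: the \(\partial_i\) are derivations of \(B_n\).} On \(\mathbb{F}[X_1,\ldots,X_n]\) the operator \(\partial_i\) is a derivation. It preserves the ideal \((X_1^p,\ldots,X_n^p)\), because \(\partial_i(fX_j^p)=\partial_i(f)X_j^p+f\,\delta_{ij}\,pX_j^{p-1}\) and \(p=0\) in \(\mathbb{F}\). Hence \(\partial_i\) descends to a derivation of \(B_n\), and the same holds for every \(f\partial_i\) with \(f\in B_n\).

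\textbf{Step 2: every derivation is a \(B_n\)-combination of the \(\partial_i\).} Let \(D\in\mathrm{Der}(B_n)\) and put
\[
D'=\sum_{i=1}^n D(x_i)\,\partial_i .
\]
Then \(D-D'\) is a derivation that vanishes on each generator \(x_i\), since \(\partial_i(x_j)=\delta_{ij}\). By the Leibniz rule it vanishes on every monomial, so \(D=D'\). Moreover the sum is direct: evaluating \(\sum f_i\partial_i\) on \(x_j\) recovers \(f_j\). Thus \(\mathrm{Der}(B_n)\) is a free \(B_n\)-module with basis \(\partial_1,\ldots,\partial_n\), of dimension \(np^n\).

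\textbf{Step 3: simplicity.} We assume \(n\geq 1\) and \((p,n)\neq(2,1)\). In the case \(p=2,\ n=1\) the algebra is spanned by \(\partial,\,x\partial\) with \([\partial,x\partial]=\partial\), so it is solvable, and the statement must be read with this exception excluded.

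Let \(I\neq 0\) be an ideal. Grade \(W(n)\) by polynomial degree of the coefficients. Take \(0\neq E\in I\) and repeatedly apply \(\mathrm{ad}\,\partial_j\); each application lowers the degree. Note that \(\partial_j(x^\alpha)=\alpha_j x^{\alpha-e_j}\) is nonzero whenever \(\alpha_j\geq 1\), because \(\alpha_j\leq p-1\). So we can reach a nonzero element of degree \(0\), namely \(\sum_i c_i\partial_i\in I\).

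Next we obtain each individual \(\partial_j\). Bracketing with suitable \(x_k\partial_l\) gives \([\sum_i c_i\partial_i,\;x_k\partial_l]=c_k\partial_l\). Choosing \(k\) with \(c_k\neq 0\) shows that every \(\partial_l\) lies in \(I\).

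Then we climb in degree using
\[
[\partial_j,\;x^{\alpha+e_j}\partial_i]=(\alpha_j+1)\,x^\alpha\partial_i .
\]
This puts \(x^\alpha\partial_i\) in \(I\) whenever some \(\alpha_j\leq p-2\). More usefully, read the identity in reverse: the ideal property lets us bracket elements of \(I\) with arbitrary elements of \(W(n)\). So we aim to produce \(x^\beta\partial_i\) in \(I\) by taking \([x^\beta\partial_k,\;\partial\text{-type elements}]\) and \([x_k\partial_k,\cdot\,]\) of lower-degree elements already in \(I\). For instance,
\[
[x^{\gamma}\partial_j,\;x_j\partial_i]=x^{\gamma}\partial_i-\gamma_i\,x_j x^{\gamma-e_i}\partial_j ,
\]
and these relations can be combined with eigenvalue separation under the toral elements \(x_k\partial_k\).

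\textbf{Main obstacle.} The hard part is the top-degree basis elements \(x^{(p-1,\ldots,p-1)}\partial_i\), together with small primes (\(p=2,3\)). For these the scalar coefficients in the brackets above can vanish mod \(p\). The first thing to try is to write \(x^{(p-1,\ldots,p-1)}\partial_i\) as a bracket \([x^{\beta}\partial_k,\;x^{\gamma}\partial_l]\) of two elements of lower degree with \(k\neq l\), chosen so that exactly one term survives with a nonzero coefficient. One factor is then already known to be in \(I\), and the other is arbitrary in \(W(n)\). If \(n=1\), this requires \(p\geq 3\), using \([x^{a}\partial,\,x^{b}\partial]=(b-a)x^{a+b-1}\partial\) with \(a+b=p\) and \(b\neq a\).
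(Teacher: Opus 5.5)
Your Steps 1 and 2 are correct and complete. They give a self-contained argument, whereas the paper only identifies $\mathrm{Der}(B_n)$ with the Jacobson--Witt algebra $W(n,1)$ and cites the literature. Your exception is also correct and worth keeping. For $p=2$, $n=1$ one has $\mathrm{Der}(B_1)=\langle\partial,x\partial\rangle$ with $[\partial,x\partial]=\partial$, which is solvable. For $n=0$, where the paper sets $B_0=\mathbb{F}$, the derivation algebra is $0$. The paper's unqualified simplicity claim needs both cases excluded.

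The gap is in Step 3, which stops exactly at what you call the main obstacle. No climbing is needed before that point: once every $\partial_j\in I$, the inclusion $[\partial_j,W(n)]\subseteq I$ already gives every $x^\alpha\partial_i$ with $\alpha\neq\tau:=(p-1,\ldots,p-1)$.

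For the top elements, your ansatz of a bracket of two non-top elements cannot work in general. Such a bracket $[x^\beta\partial_k,x^\gamma\partial_l]$ has coefficient degree $|\beta|+|\gamma|-1\le 2n(p-1)-3$. This is less than $n(p-1)$ whenever $n(p-1)\le 2$, for instance for $(p,n)=(2,2)$. Instead, you must bracket an element of $I$ against $x^\tau\partial_i$ itself and use that $x^\tau\partial_i$ is an eigenvector of a toral element. Your $n=1$ remark with $a=1$, $b=p-1$ does exactly this. For $n\ge 2$, take $k\neq i$:
\[
[x_k\partial_k,\;x^\tau\partial_i]=(p-1)\,x^\tau\partial_i=-x^\tau\partial_i .
\]
Since $n\ge 2$, we have $e_k\neq\tau$, so $x_k\partial_k\in I$. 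Hence $x^\tau\partial_i\in I$, and therefore $I=W(n)$.

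You should also make the degree-$0$ reduction precise. Apply $\partial_1^{\alpha_1}\cdots\partial_n^{\alpha_n}$ for a monomial $x^\alpha$ of maximal total degree occurring in $E$, and let $c_{\alpha,i}$ be its coefficient in the $\partial_i$-component of $E$. Then only $\alpha!\sum_i c_{\alpha,i}\partial_i$ survives, and it is nonzero because $\alpha_j\le p-1$. With these two additions, your proof establishes simplicity for all $n\ge 1$ with $(p,n)\neq(2,1)$.
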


\begin{proof}
The Lie algebra
\(
\mathrm{Der}(B_n)
\)
is the Jacobson--Witt algebra \(W(n,1)\); see \cite{slg}.
\end{proof}

\begin{lemma}
\(
\mathrm{inDer}(\mathcal{S}\otimes B_n)
=
\mathrm{inDer}(\mathcal{S})\otimes B_n.
\)
\label{inincin}
\end{lemma}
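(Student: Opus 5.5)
We prove the identity \(\mathrm{inDer}(\mathcal{S}\otimes B_n)=\mathrm{inDer}(\mathcal{S})\otimes B_n\) by computing inner derivations on pure tensors and comparing both sides inside \(\mathrm{Der}(\mathcal{S})\otimes B_n\subseteq \mathrm{Der}(\mathcal{S}\otimes B_n)\), as in Theorem \ref{decSBn}.

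Recall that \(\mathcal{S}\otimes B_n\) carries the bracket \([s\otimes f,t\otimes g]=[s,t]\otimes fg\); this uses that \(B_n\) is commutative and associative. An element \(D\otimes f\in \mathrm{Der}(\mathcal{S})\otimes B_n\) acts by \((D\otimes f)(t\otimes g)=D(t)\otimes fg\). This is a derivation of \(\mathcal{S}\otimes B_n\), which is the standard embedding implicit in Theorem \ref{decSBn}.

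The first step is the pure-tensor computation. For \(s\in\mathcal{S}\) and \(f\in B_n\), and for all \(t\otimes g\), we have
\[
\mathrm{ad}_{s\otimes f}(t\otimes g)=[s,t]\otimes fg=(\mathrm{ad}_s\otimes f)(t\otimes g).
\]
Hence \(\mathrm{ad}_{s\otimes f}=\mathrm{ad}_s\otimes f\) as derivations of \(\mathcal{S}\otimes B_n\).

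The second step proves both inclusions by linearity.
\begin{itemize}
\item Every element of \(\mathcal{S}\otimes B_n\) is a finite sum \(\sum_k s_k\otimes f_k\), and \(\mathrm{ad}\) is linear. So every inner derivation equals \(\sum_k \mathrm{ad}_{s_k}\otimes f_k\), which lies in \(\mathrm{inDer}(\mathcal{S})\otimes B_n\).
\item Conversely, \(\mathrm{inDer}(\mathcal{S})\otimes B_n\) is spanned by the elements \(\mathrm{ad}_s\otimes f\). Each of these equals \(\mathrm{ad}_{s\otimes f}\), so it is inner.
\end{itemize}

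The third step checks that the map \(\mathrm{Der}(\mathcal{S})\otimes B_n\to\mathrm{Der}(\mathcal{S}\otimes B_n)\) is injective. This ensures that the right-hand side is a genuine subspace of derivations rather than a formal tensor product. Choose a basis \((f_j)\) of \(B_n\) and suppose \(\sum_j D_j\otimes f_j\) acts as zero. Evaluating on \(t\otimes 1\) gives \(\sum_j D_j(t)\otimes f_j=0\) for all \(t\). Since the \(f_j\) are linearly independent, \(D_j(t)=0\) for every \(j\) and every \(t\), so each \(D_j=0\).

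As a by-product, because \(\mathcal{S}\) is simple and its centre is an ideal, \(Z(\mathcal{S})=0\). Hence \(\mathrm{ad}:\mathcal{S}\to\mathrm{inDer}(\mathcal{S})\) is injective, which recovers the isomorphism \(\mathcal{S}\otimes B_n\cong\mathrm{inDer}(\mathcal{S}\otimes B_n)\) used earlier. Apart from this identification the argument is direct, so I expect no serious obstacle.
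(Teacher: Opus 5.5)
Your proposal is correct and follows essentially the same route as the paper: you establish $\mathrm{ad}_{s\otimes b}=\mathrm{ad}_s\otimes b$ on pure tensors, then get both inclusions by linearity. Your explicit check that $\mathrm{Der}(\mathcal{S})\otimes B_n\to\mathrm{Der}(\mathcal{S}\otimes B_n)$ is injective is a small, valid addition that the paper leaves implicit.
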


\begin{proof}
The Lie algebra
\(
\mathrm{inDer}(\mathcal{S}\otimes B_n)
\)
is generated by the inner derivations
\(
\mathrm{ad}_{s\otimes b},
\qquad
s\in \mathcal{S},
\quad
b\in B_n.
\)
Let
\(
s_1\otimes b_1\in \mathcal{S}\otimes B_n.
\)
Then
\(
\mathrm{ad}_{s\otimes b}(s_1\otimes b_1)
=
[s\otimes b,s_1\otimes b_1]
=
[s,s_1]\otimes bb_1.
\)
On the other hand,
\(
(\mathrm{ad}_s\otimes b)(s_1\otimes b_1)
=
\mathrm{ad}_s(s_1)\otimes bb_1
=
[s,s_1]\otimes bb_1.
\)
Hence,
\(
\mathrm{ad}_{s\otimes b}
=
\mathrm{ad}_s\otimes b.
\)
Therefore,
\(
\mathrm{inDer}(\mathcal{S}\otimes B_n)
\subseteq
\mathrm{inDer}(\mathcal{S})\otimes B_n.
\)

Conversely, every element of
\(
\mathrm{inDer}(\mathcal{S})\otimes B_n
\)
is a linear combination of maps of the form
\(
\mathrm{ad}_s\otimes b
=
\mathrm{ad}_{s\otimes b},
\)
which belong to
\(
\mathrm{inDer}(\mathcal{S}\otimes B_n).
\)
Thus,
\(
\mathrm{inDer}(\mathcal{S})\otimes B_n
\subseteq
\mathrm{inDer}(\mathcal{S}\otimes B_n).
\)

Consequently,
\(
\mathrm{inDer}(\mathcal{S}\otimes B_n)
=
\mathrm{inDer}(\mathcal{S})\otimes B_n.
\)
\end{proof}

\begin{theorem}
Let
\(
\mathcal{L}
=
\mathrm{Der}(\mathcal{S}\otimes B_n)
\)
be a Lie algebra over a field of characteristic \(p>0\). Then the following statements are equivalent:
\begin{enumerate}
    \item \(\mathcal{L}\) has chief length \(2\);
    
    \item
    \(
    \mathrm{Der}(\mathcal{S})
    =
    \mathrm{inDer}(\mathcal{S}).
    \)
\end{enumerate}
\end{theorem}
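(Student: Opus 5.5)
The plan is to apply Theorem~\ref{qosim=max} with \(\mathcal{L}=\mathrm{Der}(\mathcal{S}\otimes B_n)\). First I will check that \(\mathcal{L}\) is semisimple and satisfies the required inclusions. The inclusions are tautological. Semisimplicity comes from the converse part of Theorem~\ref{block1}: \(\mathcal{S}\otimes B_n\) is \(\mathrm{Der}(\mathcal{S}\otimes B_n)\)-simple. An ideal invariant under all derivations, in particular under \(1\otimes\frac{d}{dX_i}\in\Gamma\otimes\mathrm{Der}(B_n)\), must be of the form \(\mathcal{S}\otimes P\) with \(P\) a differential ideal of \(B_n\), and \(B_n\) has no nonzero proper differential ideals. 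Hence statement (1) is equivalent to \(\mathcal{Q}:=\mathcal{L}/\mathrm{inDer}(\mathcal{S}\otimes B_n)\) being simple or one-dimensional.

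Next I will describe \(\mathcal{Q}\) explicitly. By Theorem~\ref{decSBn} and Lemma~\ref{inincin},
\[
\mathcal{Q}\cong \bigl(\mathrm{outDer}(\mathcal{S})\otimes B_n\bigr)\dotplus\bigl(\Gamma\otimes \mathrm{Der}(B_n)\bigr),
\]
provided the sum in Theorem~\ref{decSBn} is direct modulo inner derivations. The key structural observation is that \(I:=\mathrm{Der}(\mathcal{S})\otimes B_n\) is an ideal of \(\mathcal{L}\). Indeed, \([\gamma\otimes D, d\otimes b]=\gamma d\otimes D(b)\), and \(\gamma d\in\mathrm{Der}(\mathcal{S})\) because centroid elements commute with derivations up to centroid terms. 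Therefore \(I/\mathrm{inDer}(\mathcal{S}\otimes B_n)\cong\mathrm{outDer}(\mathcal{S})\otimes B_n\) is an ideal of \(\mathcal{Q}\).

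For \((2)\Rightarrow(1)\): if \(\mathrm{Der}(\mathcal{S})=\mathrm{inDer}(\mathcal{S})\), this ideal vanishes and \(\mathcal{Q}\cong\Gamma\otimes\mathrm{Der}(B_n)\). When \(n\geq1\) and \(\Gamma=\mathbb{F}\) (e.g.\ \(\mathbb{F}\) algebraically closed, or by treating \(\Gamma\) as the ground field), this is \(W(n,1)\), which is simple by Proposition~\ref{DerBn}. When \(n=0\), \(\mathcal{Q}=0\). That degenerate case gives \(\mathcal{L}=\mathcal{S}\) of chief length \(1\), so I will have to assume \(n\geq1\) or note it as an exception.

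For \((1)\Rightarrow(2)\): if \(\mathrm{outDer}(\mathcal{S})\neq0\), then \(\mathrm{outDer}(\mathcal{S})\otimes B_n\) is a nonzero ideal of \(\mathcal{Q}\). It is proper when \(n\geq1\), since \(\Gamma\otimes\mathrm{Der}(B_n)\neq0\). So \(\mathcal{Q}\) is neither simple nor, having dimension \(>1\), one-dimensional, which contradicts Theorem~\ref{qosim=max}.

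The main obstacle is the bookkeeping in Theorem~\ref{decSBn}. I need to show that \(\mathrm{Der}(\mathcal{S})\otimes B_n\) is an ideal and that the sum becomes direct after quotienting by inner derivations, which requires knowing how \(\Gamma\) interacts with \(\mathrm{Der}(\mathcal{S})\). There is also the side issue that \(\Gamma\) may be a field extension of \(\mathbb{F}\), so that \(\Gamma\otimes\mathrm{Der}(B_n)\) is a Witt algebra over \(\Gamma\); it is still simple over \(\mathbb{F}\). I would first try to settle both points using that \(\Gamma\) is a field commuting with \(\mathrm{ad}\mathcal{S}\) and normalised by \(\mathrm{Der}(\mathcal{S})\).
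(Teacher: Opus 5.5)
Your outline is the paper's own: reduce to Theorem~\ref{qosim=max}, use the ideal $\mathrm{Der}(\mathcal{S})\otimes B_n$, whose image in $\mathcal{Q}$ is $\mathrm{outDer}(\mathcal{S})\otimes B_n$, and identify $\mathcal{Q}\cong\Gamma\otimes\mathrm{Der}(B_n)$ under (2). Your semisimplicity check and your restriction to $n\geq 1$ repair two real omissions in the paper. The paper applies Theorem~\ref{qosim=max} without checking semisimplicity. It also asserts $\mathrm{Der}(\mathcal{S})\otimes B_n\subsetneq\mathcal{L}$, which is false for $n=0$, and the statement itself fails there: $\mathcal{S}=\mathfrak{sl}_2$ over $\mathbb{F}_p$ with $p>2$ satisfies (2), yet $\mathcal{L}\cong\mathcal{S}$ has chief length $1$.

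The genuine gap is the step you flag and the paper merely asserts: that $\mathrm{Der}(\mathcal{S})\otimes B_n$ is an ideal. Your bracket drops a term. The correct formula is
\[
[\gamma\otimes D,\,d\otimes b]=\gamma d\otimes D(b)+[\gamma,d]\otimes bD,\qquad [\gamma,d]\in\Gamma .
\]
The second summand lies in $\Gamma\otimes\mathrm{Der}(B_n)$ and is not $B_n$-linear unless it is zero, whereas every element of $\mathrm{Der}(\mathcal{S})\otimes B_n$ is $B_n$-linear. The same observation settles your worry about directness of the sum. So you need $\mathrm{Der}(\mathcal{S})$ to \emph{centralise} $\Gamma$; knowing that it normalises $\Gamma$ is not enough. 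Since $\gamma\mapsto[d,\gamma]$ is an $\mathbb{F}$-derivation of the field $\Gamma$, centralising holds whenever $\Gamma/\mathbb{F}$ is separable (e.g.\ $\mathbb{F}$ perfect). In that case your $(1)\Rightarrow(2)$ argument is complete for $n\geq 1$. The direction $(2)\Rightarrow(1)$ is unaffected, because there $\mathrm{Der}(\mathcal{S})=\mathrm{ad}\,\mathcal{S}$ commutes with $\Gamma$ by definition of the centroid.

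Without separability, both the step and the statement fail. Take $p>2$, $n\geq 1$, $\mathbb{F}=\mathbb{F}_p(t)$, $\Gamma=\mathbb{F}(u)$ with $u^p=t$, and $\mathcal{S}=\mathfrak{sl}_2(\Gamma)$ regarded over $\mathbb{F}$; it is simple with centroid $\Gamma$.
\begin{itemize}
\item Applying $\frac{d}{du}$ entrywise gives a derivation $\delta$ that is not $\Gamma$-linear, hence outer, so (2) fails.
\item The ideal step fails: $[u\otimes\frac{d}{dX_1},\,\delta\otimes 1]=-1\otimes\frac{d}{dX_1}\notin\mathrm{Der}(\mathcal{S})\otimes B_n$.
\item Yet (1) holds. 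One has $\mathrm{Der}(\mathcal{S})=\mathrm{ad}\,\mathcal{S}\dotplus\Gamma\delta$. Writing $\mathcal{S}\otimes B_n=\mathfrak{sl}_2(\mathbb{F})\otimes_{\mathbb{F}}(\Gamma\otimes B_n)$, one finds $\mathcal{Q}\cong\mathrm{Der}_{\mathbb{F}}(\Gamma\otimes B_n)$. This becomes $W(n+1,1)$ over $\overline{\mathbb{F}}$ and is therefore simple. Since $\mathrm{inDer}(\mathcal{S}\otimes B_n)$ is a minimal ideal, $l(\mathcal{L})=2$.
\end{itemize}

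So the argument can only be completed under extra hypotheses: $\Gamma/\mathbb{F}$ separable, $n\geq 1$, and also $(n,p)\neq(1,2)$. The last is needed because Proposition~\ref{DerBn}, which you cite, fails there: $\mathrm{Der}(B_1)$ is then the two-dimensional non-abelian algebra.

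A minor point on semisimplicity: ideals of $\mathcal{S}\otimes B_n$ need not have the form $\mathcal{S}\otimes P$ when $\Gamma\neq\mathbb{F}$. It is cleaner to argue directly. Take a nonzero element of an $\mathcal{L}$-ideal and a monomial $X^\alpha$ of maximal degree occurring in it. Applying $\prod_i(1\otimes\frac{d}{dX_i})^{\alpha_i}$ yields some $s\otimes 1\neq 0$ in the ideal. Then use that $\mathcal{S}$ is simple and perfect.
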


\begin{proof}
Assume first that
\(
\mathcal{L}
=
\mathrm{Der}(\mathcal{S}\otimes B_n)
\)
has chief length \(2\). By theorem \ref{decSBn},
\(
\mathrm{Der}(\mathcal{S}\otimes B_n)
=
\bigl(\mathrm{Der}(\mathcal{S})\otimes B_n\bigr)
+
\bigl(\Gamma\otimes \mathrm{Der}(B_n)\bigr),
\)
where \(\Gamma\) denotes the centroid of \(\mathcal{S}\). In particular,
\(
\mathrm{Der}(\mathcal{S})\otimes B_n
\)
is an ideal of \(\mathcal{L}\).

By lemma \ref{inincin},
\(
\mathrm{inDer}(\mathcal{S}\otimes B_n)
=
\mathrm{inDer}(\mathcal{S})\otimes B_n.
\)
Hence,
\(
\mathrm{inDer}(\mathcal{S}\otimes B_n)
\subseteq
\mathrm{Der}(\mathcal{S})\otimes B_n.
\)
Since
\(
\mathrm{inDer}(\mathcal{S}\otimes B_n)
\)
is a proper ideal of \(\mathcal{L}\), the quotient
\(
\frac{\mathrm{Der}(\mathcal{S})\otimes B_n}
{\mathrm{inDer}(\mathcal{S}\otimes B_n)}
\)
is an ideal of
\(
\frac{\mathrm{Der}(\mathcal{S}\otimes B_n)}
{\mathrm{inDer}(\mathcal{S}\otimes B_n)}.
\)

By theorem \ref{qosim=max},
\(
\frac{\mathrm{Der}(\mathcal{S}\otimes B_n)}
{\mathrm{inDer}(\mathcal{S}\otimes B_n)}
\)
is simple. Moreover,
\(
\mathrm{Der}(\mathcal{S})\otimes B_n
\subsetneq
\mathrm{Der}(\mathcal{S}\otimes B_n),
\)
and therefore
\(
\frac{\mathrm{Der}(\mathcal{S})\otimes B_n}
{\mathrm{inDer}(\mathcal{S}\otimes B_n)}
=
0.
\)
Consequently,
\(
\mathrm{Der}(\mathcal{S})\otimes B_n
=
\mathrm{inDer}(\mathcal{S}\otimes B_n)
=
\mathrm{inDer}(\mathcal{S})\otimes B_n,
\)
which implies that
\(
\mathrm{Der}(\mathcal{S})
=
\mathrm{inDer}(\mathcal{S}).
\)

Conversely, assume that
\(
\mathrm{Der}(\mathcal{S})
=
\mathrm{inDer}(\mathcal{S}).
\)
By theorem \ref{decSBn},
\(
\mathcal{L}
=
\mathrm{Der}(\mathcal{S}\otimes B_n)
=
\bigl(\mathrm{Der}(\mathcal{S})\otimes B_n\bigr)
+
\bigl(\Gamma\otimes \mathrm{Der}(B_n)\bigr).
\)
Using the hypothesis, we obtain
\(
\mathcal{L}
=
\bigl(\mathrm{inDer}(\mathcal{S})\otimes B_n\bigr)
+
\bigl(\Gamma\otimes \mathrm{Der}(B_n)\bigr).
\)

Moreover,
\(
\bigl(\mathrm{inDer}(\mathcal{S})\otimes B_n\bigr)
\cap
\bigl(\Gamma\otimes \mathrm{Der}(B_n)\bigr)
=
0.
\)
Hence,
\(
\mathcal{L}
=
\bigl(\mathrm{inDer}(\mathcal{S})\otimes B_n\bigr)
\dotplus
\bigl(\Gamma\otimes \mathrm{Der}(B_n)\bigr).
\)

By lemma \ref{inincin},
\(
\mathrm{inDer}(\mathcal{S}\otimes B_n)
=
\mathrm{inDer}(\mathcal{S})\otimes B_n.
\)
Therefore,
\(
\frac{\mathrm{Der}(\mathcal{S}\otimes B_n)}
{\mathrm{inDer}(\mathcal{S}\otimes B_n)}
\cong
\Gamma\otimes \mathrm{Der}(B_n).
\)

Since \(\Gamma\) is a field extension of \(\mathbb{F}\),
\(
\Gamma\otimes \mathrm{Der}(B_n)
\)
is simple whenever
\(
\mathrm{Der}(B_n)
\)
is simple. By proposition \ref{DerBn},
\(
\mathrm{Der}(B_n)
\)
is simple. Hence,
\(
\frac{\mathrm{Der}(\mathcal{S}\otimes B_n)}
{\mathrm{inDer}(\mathcal{S}\otimes B_n)}
\)
is simple. Theorem \ref{qosim=max} now implies that
\(
\mathcal{L}
=
\mathrm{Der}(\mathcal{S}\otimes B_n)
\)
has chief length \(2\).
\end{proof}

The previous theorem shows that the classification of semisimple Lie algebras of chief length \(2\) and of that type reduces to determining the simple Lie algebras of positive characteristic satisfying
\(
\mathrm{Der}(\mathcal{S})
=
\mathrm{inDer}(\mathcal{S}).
\)

We now generalise the preceding results. Throughout the following discussion, we retain the notation of theorem \ref{block1}.

\begin{lemma}
Let \(\mathcal{L}\) be a semisimple Lie algebra over a field of characteristic \(p>0\). Assume the notation of theorem \ref{block1}. Then
\(
n\leqslant l(\mathcal{L}).
\)
\end{lemma}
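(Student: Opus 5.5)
The plan is to build an explicit chain of ideals of length \(n\) from the ideals \(\mathcal{L}^i\) of lemma \ref{Li}. Then I will use the fact that any chain of ideals can be refined to a chief series, and all chief series have length \(l(\mathcal{L})\) by the corollary to Towers' theorem.

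\textbf{Step 1: a strictly increasing chain.} For \(0\leqslant k\leqslant n\) set
\(
J_k=\bigoplus_{i=1}^{k}\mathcal{L}^i ,
\)
with \(J_0=0\). By lemma \ref{Liideal}, each \(J_k\) is an ideal of \(\mathcal{L}\). The sum is direct because the \(\mathcal{L}^i\) lie in distinct summands \(\mathrm{Der}(\mathcal{S}_i\otimes B_{n_i})\) of the ambient direct sum, so their pairwise intersections are zero. By lemma \ref{Li}, each \(\mathcal{L}^{k+1}\neq 0\), and \(\mathcal{L}^{k+1}\cap J_k=0\). Hence \(J_k\subsetneq J_{k+1}\), and we get a strictly increasing chain of ideals
\(
0=J_0\subsetneq J_1\subsetneq\cdots\subsetneq J_n\subseteq \mathcal{L}.
\)
If \(J_n\neq\mathcal{L}\), I append \(\mathcal{L}\) at the end. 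Either way, the chain has at least \(n\) strict steps ending at \(\mathcal{L}\).

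\textbf{Step 2: refinement to a chief series.} Since \(\mathcal{L}\) is finite-dimensional, each gap \(J_k\subsetneq J_{k+1}\) can be refined. Take an ideal \(K\) of \(\mathcal{L}\) with \(J_k\subsetneq K\subseteq J_{k+1}\) of minimal dimension. Then \(K/J_k\) is a minimal ideal of \(\mathcal{L}/J_k\), hence a chief factor. Iterating by induction on \(\dim(J_{k+1}/J_k)\) inserts ideals until every successive quotient is a chief factor. The result is a chief series of \(\mathcal{L}\) containing every \(J_k\) as a term. Its length is at least the number of strict inclusions in the original chain, namely at least \(n\). By the corollary to Towers' theorem, this length equals \(l(\mathcal{L})\), so \(n\leqslant l(\mathcal{L})\).

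\textbf{Main obstacle.} The only delicate point is confirming that the chain in Step 1 is strictly increasing, that is, that consecutive \(J_k\) really differ. This rests entirely on lemma \ref{Li} (\(\mathcal{L}^i\supseteq\mathrm{inDer}(\mathcal{S}_i\otimes B_{n_i})\neq 0\)) together with the directness of the ambient sum, which forces \(\mathcal{L}^{k+1}\not\subseteq J_k\). The refinement step is routine, but I would state it explicitly, since the paper has so far only used chief series and not the general refinement principle.
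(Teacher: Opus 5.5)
Your proof is correct and follows the paper's argument. The chain \(0\subsetneq \mathcal{L}^1\subsetneq \mathcal{L}^1\oplus\mathcal{L}^2\subsetneq\cdots\subsetneq\bigoplus_{i=1}^{n}\mathcal{L}^i\) of ideals from lemma \ref{Liideal} has \(n\) strict steps, which forces \(n\leqslant l(\mathcal{L})\). Beyond the paper, you justify two points it leaves tacit: the strictness of the inclusions (via lemma \ref{Li} and the directness of the ambient sum), and the fact that any chain of ideals refines to a chief series, which the paper's phrase ``longer than a chief series \ldots\ impossible'' silently relies on.
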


\begin{proof}
Suppose, for contradiction, that
\(
n>l(\mathcal{L}).
\)
By lemma \ref{Liideal},
\(
\bigoplus_{k=1}^{r}\mathcal{L}^{i_k}
\)
is an ideal of \(\mathcal{L}\) for every \(r\). In particular, consider the chain
\(
0
=
I_0
\subsetneq
I_1
=
\mathcal{L}^1
\subsetneq
I_2
=
\mathcal{L}^1\oplus \mathcal{L}^2
\subsetneq
\cdots
\subsetneq
I_n
=
\bigoplus_{i=1}^{n}\mathcal{L}^i.
\)
This is a strictly increasing chain of ideals of length \(n\). Since
\(
n>l(\mathcal{L}),
\)
this chain is longer than a chief series of \(\mathcal{L}\), which is impossible. Therefore,
\(
n\leqslant l(\mathcal{L}).
\)
\end{proof}

\begin{theorem}
Let \(\mathcal{L}\) be a Lie algebra over a field of characteristic \(p>0\). Assume the notation of theorem \ref{block1}, and let \(n\) denote the number of direct summands appearing in the decomposition of theorem \ref{block1}. Then the following statements are equivalent:
\begin{enumerate}
    \item \(\mathcal{L}\) is semisimple and
    \(
    l(\mathcal{L})=n;
    \)
    
    \item
    \(
    \mathcal{L}
    =
    \bigoplus_{i=1}^{n}\mathcal{S}_i,
    \)
    where each \(\mathcal{S}_i\) is a simple Lie algebra.
\end{enumerate}
\end{theorem}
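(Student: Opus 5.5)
For the direction (2)\(\Rightarrow\)(1), I will argue exactly as in the characteristic-zero proposition of Section~4. Let \(\mathcal{L}=\bigoplus_{i=1}^{n}\mathcal{S}_i\) with each \(\mathcal{S}_i\) simple. Semisimplicity follows because a solvable ideal \(R\) of \(\mathcal{L}\) projects onto a solvable ideal of each \(\mathcal{S}_i\). Since each \(\mathcal{S}_i\) is simple and non-abelian, every such projection is zero, so \(R=0\).

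Next I use the chain
\(0\subsetneq \mathcal{S}_1\subsetneq \mathcal{S}_1\oplus\mathcal{S}_2\subsetneq\cdots\subsetneq\mathcal{L}\).
Its successive factors are isomorphic to the \(\mathcal{S}_i\). As ideals of the corresponding quotients these factors are simple, so the chain is a chief series and \(l(\mathcal{L})=n\).

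I still have to check that this \(n\) is the number appearing in theorem~\ref{block1}. Taking \(n_i=0\), we have \(\mathcal{S}_i\cong\mathrm{inDer}(\mathcal{S}_i)\), so \(\mathcal{L}\) embeds as \(\bigoplus\mathrm{inDer}(\mathcal{S}_i\otimes B_0)\). This is a valid Block decomposition with \(n\) summands. I will read the number of summands as intrinsic: it equals the number of minimal ideals of \(\mathcal{L}\), namely the socle components \(\mathrm{inDer}(\mathcal{S}_i\otimes B_{n_i})\).

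For (1)\(\Rightarrow\)(2), assume \(\mathcal{L}\) is semisimple with \(l(\mathcal{L})=n\). By lemma~\ref{Liideal} the chain \(0\subsetneq\mathcal{L}^1\subsetneq\mathcal{L}^1\oplus\mathcal{L}^2\subsetneq\cdots\subsetneq\bigoplus_{i=1}^n\mathcal{L}^i\) consists of ideals. Lemma~\ref{Li} makes the inclusions strict. The argument of the preceding lemma then forces this chain to be a chief series of full length, ending at \(\mathcal{L}\).

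Two consequences follow. First, \(\mathcal{L}=\bigoplus_i\mathcal{L}^i\). Second, each \(\mathcal{L}^i\) has no non-trivial proper ideal of \(\mathcal{L}\) inside it. Because the sum is direct, an ideal of \(\mathcal{L}^i\) is an ideal of \(\mathcal{L}\), so each \(\mathcal{L}^i\) has no non-trivial proper ideals at all.

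From here I repeat verbatim the \(n=2\) portion of the proof of proposition~\ref{decMAX}, summand by summand. The ideal \((\mathrm{Der}(\mathcal{S}_i)\otimes B_{n_i})\cap\mathcal{L}^i\) must equal \(\mathcal{L}^i\). Next, \(\mathrm{inDer}(\mathcal{S}_i\otimes B_{n_i})\) is a non-zero ideal of \(\mathcal{L}^i\), so it equals \(\mathcal{L}^i\). Then \(\mathcal{S}_i\otimes P\), for a non-trivial proper ideal \(P\) of \(B_{n_i}\), rules out \(n_i\neq0\). Hence \(\mathcal{L}^i=\mathrm{inDer}(\mathcal{S}_i)\cong\mathcal{S}_i\), and \(\mathcal{L}=\bigoplus\mathcal{S}_i\).

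The main obstacle is justifying that a strictly increasing chain of ideals of length \(n=l(\mathcal{L})\) is automatically a chief series that ends at \(\mathcal{L}\). The preceding lemma uses only the weaker fact that no chain of ideals is longer than \(l(\mathcal{L})\).

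My first attempt will be a Jordan--Hölder style refinement argument. Any chain of ideals can be refined to a chief series, and refinement increases the length whenever a factor is not chief or the top term \(\bigoplus\mathcal{L}^i\) is not all of \(\mathcal{L}\). By the corollary to Towers' theorem every chief series has length exactly \(l(\mathcal{L})=n\), so no proper refinement is possible. Hence every factor is a chief factor and \(\bigoplus_i\mathcal{L}^i=\mathcal{L}\).

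I will spell this step out, since it is precisely where the equality \(l(\mathcal{L})=n\), rather than the inequality of the preceding lemma, is used.
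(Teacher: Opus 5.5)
Your proposal is correct and follows the paper's own proof. The paper uses the same chain $0\subsetneq\mathcal{L}^1\subsetneq\mathcal{L}^1\oplus\mathcal{L}^2\subsetneq\cdots$, forced by $l(\mathcal{L})=n$ to end at $\mathcal{L}$, then the summand-by-summand argument of Proposition~\ref{decMAX} to get $n_i=0$ and $\mathcal{L}^i\cong\mathcal{S}_i$, and the characteristic-zero argument for the converse.

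Your refinement argument and your identification of $n$ with the number of minimal ideals usefully make explicit two points the paper leaves implicit: that each $\mathcal{L}^i$ has no non-trivial proper ideals, and that $n$ is well defined. You can also drop the detour through $\mathrm{Der}(\mathcal{S}_i)\otimes B_{n_i}$, since $\mathrm{inDer}(\mathcal{S}_i\otimes B_{n_i})$ is already a non-zero ideal of all of $\mathrm{Der}(\mathcal{S}_i\otimes B_{n_i})$, and hence of $\mathcal{L}^i$.
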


\begin{proof}
Assume first that \(\mathcal{L}\) is semisimple and satisfies
\(
l(\mathcal{L})=n.
\)
The case \(n=1\) is immediate. Suppose therefore that
\(
n>1.
\)

Consider the chain of ideals
\(
0
=
I_0
\subsetneq
I_1
=
\mathcal{L}^1
\subsetneq
I_2
=
\mathcal{L}^1\oplus \mathcal{L}^2
\subsetneq
\cdots
\subsetneq
I_n
=
\bigoplus_{i=1}^{n}\mathcal{L}^i.
\)
If
\(
\bigoplus_{i=1}^{n}\mathcal{L}^i
\subsetneq
\mathcal{L},
\)
then that chain could be extended to a strictly longer chain of ideals in \(\mathcal{L}\), contradicting the assumption that
\(
l(\mathcal{L})=n.
\)
Hence,
\(
\mathcal{L}
=
\bigoplus_{i=1}^{n}\mathcal{L}^i.
\)

Applying the same argument used in the proof of proposition \ref{decMAX}, each algebra \(\mathcal{L}^i\) must coincide with a simple Lie algebra \(\mathcal{S}_i\). Therefore,
\(
\mathcal{L}
=
\bigoplus_{i=1}^{n}\mathcal{S}_i.
\)

Conversely, suppose that
\(
\mathcal{L}
=
\bigoplus_{i=1}^{n}\mathcal{S}_i,
\)
where each \(\mathcal{S}_i\) is simple. Then \(\mathcal{L}\) is semisimple. Moreover, the proof is identical to that of the corresponding theorem for semisimple Lie algebras of characteristic \(0\). Consequently,
\(
l(\mathcal{L})=n.
\)
\end{proof}

\begin{proposition}
Let \(\mathcal{L}\) be a semisimple Lie algebra over a field of characteristic \(p>0\), and assume the notation of theorem \ref{block1}. Suppose that
\(
n<l(\mathcal{L}).
\)
Let
\(
(i_k)_{1\leqslant k\leqslant r}
\subseteq
\{1,\ldots,n\}.
\)
Then
\(
\bigoplus_{k=1}^{r}\mathrm{inDer}(\mathcal{S}_{i_k}\otimes B_{n_{i_k}})
\)
is an ideal of \(\mathcal{L}\).
\end{proposition}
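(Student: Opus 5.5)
The hypothesis \(n<l(\mathcal{L})\) plays no role in the argument. The claim should follow from Block's inclusions alone. Set
\[
\mathcal{D}=\bigoplus_{i=1}^{n}\mathrm{Der}(\mathcal{S}_i\otimes B_{n_i}),
\qquad
K=\bigoplus_{k=1}^{r}\mathrm{inDer}(\mathcal{S}_{i_k}\otimes B_{n_{i_k}}).
\]
The plan is to show that \(K\) is an ideal of the ambient algebra \(\mathcal{D}\). Once this is done, the inclusion \(K\subseteq\mathcal{L}\subseteq\mathcal{D}\) finishes the proof: for \(x\in\mathcal{L}\) and \(y\in K\) we have \([x,y]\in[\mathcal{D},K]\subseteq K\).

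\textbf{Step 1: \(K\subseteq\mathcal{L}\).} This is immediate from Theorem~\ref{block1}. Indeed, \(K\) is a partial sum of the summands of \(\bigoplus_{i=1}^{n}\mathrm{inDer}(\mathcal{S}_i\otimes B_{n_i})\), and that whole sum lies in \(\mathcal{L}\).

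\textbf{Step 2: each summand of \(K\) is an ideal of \(\mathcal{D}\).} Fix one summand \(\mathrm{inDer}(A_i)\), where \(A_i=\mathcal{S}_i\otimes B_{n_i}\). I would use two standard facts.

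First, for any algebra \(A\), the inner derivations form an ideal of \(\mathrm{Der}(A)\). This is the identity \([D,\mathrm{ad}_a]=\mathrm{ad}_{D(a)}\), which follows directly from the Leibniz rule. Hence \(\mathrm{inDer}(A_i)\) is an ideal of \(\mathrm{Der}(A_i)\).

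Second, \(\mathcal{D}\) is a direct sum of Lie algebras. So a bracket \([x,y]\) with \(x\in\mathcal{D}\) and \(y\in\mathrm{inDer}(A_i)\) sees only the \(i\)-th component \(x_i\in\mathrm{Der}(A_i)\) of \(x\); the other components bracket to zero with \(y\). Therefore
\[
[x,y]=[x_i,y]\in[\mathrm{Der}(A_i),\mathrm{inDer}(A_i)]\subseteq\mathrm{inDer}(A_i).
\]

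\textbf{Step 3: conclude.} A sum of ideals is an ideal, so \(K\) is an ideal of \(\mathcal{D}\). By the remark in the first paragraph, \(K\) is then an ideal of \(\mathcal{L}\).

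\textbf{Expected obstacle.} The only delicate point is the projection argument in Step 2. An element of \(\mathcal{L}\) need not split as a sum of elements of the \(\mathcal{L}^i\); \(\mathcal{L}\) may contain "diagonal" elements. For this reason I will bracket inside \(\mathcal{D}\), where elements do decompose componentwise, rather than inside \(\mathcal{L}\), and then restrict to \(\mathcal{L}\).
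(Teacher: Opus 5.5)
Your proof is correct and follows the paper's argument essentially step for step. Both show that \(K\) is an ideal of the ambient direct sum \(\bigoplus_{i}\mathrm{Der}(\mathcal{S}_i\otimes B_{n_i})\), using that inner derivations form an ideal of the derivation algebra together with the componentwise bracket, and then use \(K\subseteq\mathcal{L}\) to conclude; the paper phrases this last step as \(\mathcal{L}\cap K=K\). You are also right that the hypothesis \(n<l(\mathcal{L})\) is never used, and your identity \([D,\mathrm{ad}_a]=\mathrm{ad}_{D(a)}\) justifies a fact the paper only asserts.
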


\begin{proof}
For every \(k\),
\(
\mathrm{inDer}(\mathcal{S}_{i_k}\otimes B_{n_{i_k}})
\)
is an ideal of
\(
\mathrm{Der}(\mathcal{S}_{i_k}\otimes B_{n_{i_k}}).
\)
Since
\(
\bigoplus_{i=1}^{n}\mathrm{Der}(\mathcal{S}_i\otimes B_{n_i})
\)
is a direct sum of Lie algebras,
\(
\mathrm{inDer}(\mathcal{S}_{i_k}\otimes B_{n_{i_k}})
\)
is also an ideal of
\(
\bigoplus_{i=1}^{n}\mathrm{Der}(\mathcal{S}_i\otimes B_{n_i}).
\)
Consequently,
\(
\bigoplus_{k=1}^{r}\mathrm{inDer}(\mathcal{S}_{i_k}\otimes B_{n_{i_k}})
\)
is an ideal of
\(
\bigoplus_{i=1}^{n}\mathrm{Der}(\mathcal{S}_i\otimes B_{n_i}).
\)

Since
\(
\mathcal{L}
\subseteq
\bigoplus_{i=1}^{n}\mathrm{Der}(\mathcal{S}_i\otimes B_{n_i}),
\)
the intersection
\(
\mathcal{L}
\cap
\bigoplus_{k=1}^{r}\mathrm{inDer}(\mathcal{S}_{i_k}\otimes B_{n_{i_k}})
\)
is an ideal of \(\mathcal{L}\). Moreover,
\(
\bigoplus_{k=1}^{r}\mathrm{inDer}(\mathcal{S}_{i_k}\otimes B_{n_{i_k}})
\subseteq
\mathcal{L},
\)
and therefore
\(
\mathcal{L}
\cap
\bigoplus_{k=1}^{r}\mathrm{inDer}(\mathcal{S}_{i_k}\otimes B_{n_{i_k}})
=
\bigoplus_{k=1}^{r}\mathrm{inDer}(\mathcal{S}_{i_k}\otimes B_{n_{i_k}}).
\)
Hence,
\(
\bigoplus_{k=1}^{r}\mathrm{inDer}(\mathcal{S}_{i_k}\otimes B_{n_{i_k}})
\)
is an ideal of \(\mathcal{L}\).
\end{proof}

When
\(
n<l(\mathcal{L}),
\)
the previous lemma yields the chain of ideals
\(
I_1
=
\mathrm{inDer}(\mathcal{S}_1\otimes B_{n_1})
\subsetneq
I_2
=
\mathrm{inDer}(\mathcal{S}_1\otimes B_{n_1})
\oplus
\mathrm{inDer}(\mathcal{S}_2\otimes B_{n_2})
\subsetneq
\cdots
\subsetneq
I_n
=
\bigoplus_{i=1}^{n}\mathrm{inDer}(\mathcal{S}_i\otimes B_{n_i}),
\)
whose length is at most \(n\). To refine this chain into a chief series, one may proceed as follows. First, determine a chief series of
\(
\mathcal{L}/I_n.
\)
The corresponding preimages then provide all ideals in a chief series of \(\mathcal{L}\) containing the chain
\(
\{I_i\}.
\)
The remaining step consists in determining whether additional ideals occur inside the family
\(
\{I_i\}.
\)
Consequently, the next stage of the analysis reduces to the study of
\(
\mathcal{L}/I_n.
\)

Observe that
\(
\frac{
\bigoplus_{i=1}^{n}
\mathrm{Der}(\mathcal{S}_i\otimes B_{n_i})
}{
I_n}
\cong
\bigoplus_{i=1}^{n}
\frac{\mathrm{Der}(\mathcal{S}_i\otimes B_{n_i})}
{\mathrm{inDer}(\mathcal{S}_i\otimes B_{n_i})}.
\)
By theorem \ref{decSBn},
\(
\mathrm{Der}(\mathcal{S}_i\otimes B_{n_i})
=
\bigl(
\mathrm{Der}(\mathcal{S}_i)\otimes B_{n_i}
\bigr)
+
\bigl(
\Gamma_i\otimes \mathrm{Der}(B_{n_i})
\bigr).
\)
Hence,
\(
\frac{\mathrm{Der}(\mathcal{S}_i\otimes B_{n_i})}
{\mathrm{inDer}(\mathcal{S}_i\otimes B_{n_i})}
\cong
\bigl(
\mathrm{outDer}(\mathcal{S}_i)\otimes B_{n_i}
\bigr)
+
\bigl(
\Gamma_i\otimes \mathrm{Der}(B_{n_i})
\bigr).
\)
Therefore,
\(
\mathcal{L}/I_n
\)
is a subalgebra of
\(
\bigoplus_{i=1}^{n}
\left(
\bigl(
\mathrm{outDer}(\mathcal{S}_i)\otimes B_{n_i}
\bigr)
+
\bigl(
\Gamma_i\otimes \mathrm{Der}(B_{n_i})
\bigr)
\right).
\)

\bibliographystyle{plain}
\bibliography{biblio}

\end{document}